\documentclass[12pt,a4paper]{amsart}
    
\usepackage{fullpage}
\makeatletter

\usepackage[english]{babel}
\usepackage[utf8]{inputenc}
 \usepackage{graphicx}
\usepackage[square,sort,comma,numbers]{natbib}
\usepackage{stmaryrd}

\usepackage{amsfonts}
\usepackage{amsthm}
\usepackage{amsmath,amssymb}
\usepackage{xparse}
\usepackage{tikz-cd}
\usepackage{verbatim}
\usepackage{mathtools}

\title{DGAs with polynomial homology}
\author{Haldun Özgür Bayındır}





\newcommand{\cof}{\rightarrowtail}


\newcommand{\bwe}{\smash{\rlap{\kern 8.5pt\raise 4pt\hbox{\footnotesize $\sim$}}}\longleftarrow}

\def\longfib{\DOTSB\relbar\joinrel\twoheadrightarrow}

\newcommand{\trfib}{\smash{\rlap{\kern 7pt\raise 4pt\hbox{\footnotesize $\sim$}}}\longfib}
\newcommand{\hmod}{\mhyphen\textup{modules}}
\newcommand{\trcof}{\smash{\rlap{\kern 5.5pt\raise 4pt\hbox{\footnotesize $\sim$}}}\cof}

\newcommand{\wdg}{\wedge}
\newcommand{\ot}{\otimes}
\newcommand{\wdgz}{\wedge_{H\mathbb{Z}}}
\newcommand{\wdgfp}{\wedge_{H\mathbb{F}_p}}
\newcommand{\wdgft}{\wedge_{H\mathbb{F}_2}}
\NewDocumentCommand{\tens}{t_}
 {%
  \IfBooleanTF{#1}
   {\tensop}
   {\otimes}%
 }
\NewDocumentCommand{\tensop}{m}
 {%
  \mathbin{\mathop{\otimes}\displaylimits_{#1}}%
 }
\DeclareMathOperator{\Hom}{Hom}

\DeclareMathOperator{\hh}{\ensuremath{\textup{HH}}}
\DeclareMathOperator{\ho}{\ensuremath{\textup{Ho}}}
\DeclareMathOperator{\modu}{\ensuremath{\textup{-}\mathsf{Mod}}}

\newcommand{\hhfps}{\textup{HH}^{\mathbb{F}_p}_*}
\DeclareMathOperator{\hhfts}{\ensuremath{\textup{HH}^{\mathbb{F}_2}_*}}

\DeclareMathOperator{\hhfpn}{\ensuremath{\textup{HH}^{\mathbb{F}_p}_n}}
\DeclareMathOperator{\thh}{\ensuremath{\textup{THH}}}
\DeclareMathOperator{\aut}{\ensuremath{\textup{Aut}}}
\newcommand*{\rom}[1]{\expandafter\@slowromancap\romannumeral #1@}
\newcommand{\we}{\smash{\rlap{\kern 3.8pt\raise 4pt\hbox{\footnotesize $\sim$}}}\rightarrow}

\newcommand{\overbar}[1]{\mkern 1.5mu\overline{\mkern-1.5mu#1\mkern-1.5mu}\mkern 1.5mu}

\newcommand{\arrowcat}{\mathcal{E}_{H\F_p}^{\cdot \to \cdot}}
\newcommand{\dsa}{\mathcal{A}_*}
\newcommand{\whz}{\wedge_{H\Z}}
\newcommand{\Z}{\mathbb{Z}}

\newcommand{\F}{\mathbb{F}}

\newcommand{\lv}{\lvert}
\newcommand{\hz}{H\mathbb{Z}}

\newcommand{\hfp}{H\mathbb{F}_p}
\newcommand{\hft}{H\mathbb{F}_2}

\newcommand{\obhfp}{\overbar{H\mathbb{F}}_p}
\newcommand{\rv}{\rvert}
\DeclareMathOperator{\tor}{\ensuremath{\textup{Tor}}}

\newcommand{\fp}{\mathbb{F}_p}
\newcommand{\ft}{\mathbb{F}_2}
\newcommand{\torup}{\textup{Tor}}
\newcommand{\lambdatau}{\Lambda_{\F_p}(\tau_0)}
\newcommand{\lambdafp}{\Lambda_{\F_p}}
\newcommand{\lambdaft}{\Lambda_{\F_2}}
\newcommand{\nc}{\newcommand}
\nc{\C}{\mathcal{C}}

\nc{\z}{\mathbb{Z}}
\nc{\PP}{\mathbb{P}}
\nc{\R}{\mathbb{R}}
\nc{\f}{\mathbb{F}}
\nc{\pis}{\pi_*}
\newcommand{\modmod}[3][{}]{{#2}\sslash_{#1}{#3}}
\nc{\sph}{\mathbb{S}}

\nc{\etw}{E_2}

\newcommand{\Sp}{\mathbb{S}}
\mathchardef\mhyphen="2D

\newtheorem{theorem}{Theorem}[section]
\newtheorem{lemma}[theorem]{Lemma}

\newtheorem{corollary}[theorem]{Corollary}
\newtheorem{proposition}[theorem]{Proposition}

\theoremstyle{definition}

\newtheorem{definition}[theorem]{Definition}
\newtheorem{example}[theorem]{Example}
\newtheorem{construction}[theorem]{Construction}
\newtheorem{remark}[theorem]{Remark}

 \newtheoremstyle{TheoremNum}
        {\topsep}{\topsep}              
        {\itshape}                      
        {}                              
        {\bfseries}                     
        {.}                             
        { }                             
        {\thmname{#1}\thmnote{ \bfseries #3}}
    \theoremstyle{TheoremNum}
    \newtheorem{thmn}{Theorem}

\def\co{\colon\thinspace}

\date{}

\usepackage{natbib}
\parskip= 2pt
\begin{document}

\maketitle

\begin{abstract}
In this work, we study the classification of differential graded algebras over $\mathbb{Z}$ (DGAs) whose homology is $\mathbb{F}_p[x]$, i.e.\ the polynomial algebra over $\mathbb{F}_p$ on a single generator. This  classification problem was left open in work of Dwyer, Greenlees and Iyengar.

For $\lvert y_{2p-2} \rvert = 2p-2$, we show that there is a unique non-formal DGA with homology $\mathbb{F}_p[y_{2p-2}]$ and a non-formal $2p-2$ Postnikov section. Among a classification result, this provides the first example of a non-formal DGA with homology $\mathbb{F}_p[x]$. By duality, this also shows that there is  a non-formal DGA whose homology is an exterior algebra over $\mathbb{F}_p$ with a generator in degree $-(2p-1)$.

Considering the classification of the ring spectra corresponding to these DGAs, we show that every $E_2$ DGA with homology $\mathbb{F}_p[x]$ (with no restrictions on $\lvert x \rvert$) is topologically equivalent to the formal DGA with  homology $\mathbb{F}_p[x]$, i.e.\ they are topologically formal. This follows by a theorem of Hopkins and Mahowald.

\end{abstract}

\section{Introduction}

This paper concerns the classification of DGAs. An interesting classification result is given by Dwyer, Greenlees and Iyengar in \cite{dwyer2013dg}. Namely, they provide a complete classification of DGAs with homology $\Lambda_{\F_p}(x_{-1})$, i.e.\ the exterior algebra over $\F_p$ with a single generator in degree $-1$. This classification result states that there is a bijection between quasi-isomorphism classes of DGAs with homology $\Lambda_{\F_p}(x_{-1})$ and isomorphism classes of complete discrete valuation rings with
residue field $\F_p$. Indeed, there is also a one to one correspondence between  topological equivalence classes of DGAs with homology $\Lambda_{\F_p}(x_{-1})$ and isomorphism classes of complete discrete valuation rings with residue field $\F_p$, see Theorem \ref{thm topeqCDVR}.

Classifying the Postnikov extensions of $\F_p$ in DGAs, Dugger and Shipley obtain a classification of quasi-isomorphism classes of  DGAs with homology $\Lambda_{\F_p}(x_n)$ for $\lv x_n \rv = n >0$ \cite{dugger2007topological}. They show that there is a unique DGA with homology $\Lambda_{\F_p}(x_n)$ for odd $n$ and there are precisely two distinct DGAs with homology $\Lambda_{\F_p}(x_n)$ for even $n$. Also, it is known that there is a unique DGA with homology $\Lambda_{\F_p}(x_0)$. This result together with the result of Dwyer, Greenlees and Iyengar provides a complete classification of DGAs with homology $\Lambda_{\F_p}(x_n)$ for $n \geq -1$.

Dwyer, Greenlees and Iyengar also work on the classification of DGAs with homology $\Lambda_{\F_p}(x_n)$ for $n<-1$. Using a Moore-Koszul duality argument, they show that for $n<-1$ there is a bijection between quasi-isomorphism classes of DGAs with homology $\Lambda_{\F_p}(x_n)$  and quasi-isomorphism classes of DGAs with homology $\F_p[x_{-n-1}]$, i.e.\ the polynomial algebra over $\F_p$ with a single generator in degree $-n-1$. The classification of DGAs with homology $\F_p[x]$ for $\lv x \rv >0$ is left as an open question in \cite{dwyer2013dg}. In this work, we study this problem. 

Note that when we say DGAs, we mean differential graded algebras over $\Z$. For $\F_p$-DGAs, the answer to this problem is obvious. There is a unique $\F_p$-DGA with homology $\F_p[x]$. This is the free DGA over $\Sigma^{\lv x \rv} \F_p$. For a proof of this fact, see Section \ref{sec subsection on proof of the e2 theorems}. 

Our methods heavily rely on the topological equivalences of DGAs. The idea of topological equivalences is to use ring spectra to obtain new equivalences between DGAs. This provides us new tools for our calculations. Namely, this allows us to use  our knowledge of the B\"okstedt spectral sequence calculating $\text{THH}(H\F_p)$ to obtain the  Hochschild cohomology groups that classify the Postnikov extensions we need.

The homotopy category of DGAs is equivalent to the homotopy category of $H\Z$-algebras \cite{stanley1997dissertation}. To move from DGAs to ring spectra, we use the zig-zag of Quillen equivalences between DGAs and $H\Z$-algebras \cite{shipley2007hz}.  For a DGA $X$, we denote the corresponding $H\Z$-algebra by $HX$. Furthermore, we omit the forgetful functor from $H\Z$-algebras to $\Sp$-algebras and denote the underlying $\Sp$-algebra of $HX$  by $HX$. Note that we denote the sphere spectrum by $\sph$.
\begin{definition}
 Two DGAs $X$ and $Y$ are said to be \textbf{topologically equivalent} if $HX$ and $HY$ are weakly equivalent as $\Sp$-algebras. 
\end{definition}
The definition of topological equivalences is due to Dugger and Shipley \cite{dugger2007topological}. Note that because $H\Z$-algebras is Quillen equivalent to DGAs, two DGAs $X$ and $Y$ are quasi-isomorphic  precisely when $HX$ and $HY$ are equivalent as $H\Z$-algebras. Furthermore, the forgetful functor from $H\Z$-algebras to $\Sp$-algebras preserves weak equivalences. This shows that quasi-isomorphic DGAs are always topologically equivalent. However, the main result of \cite{dugger2007topological} shows that the opposite direction of this statement is not true. Namely, Dugger and Shipley show that there are examples of DGAs that are topologically equivalent but not quasi-isomorphic. This is explained in the following example. 

A DGA is said to be \textbf{formal} if it is quasi-isomorphic to a DGA with trivial differentials. Similarly, we say a DGA is $\textbf{topologically formal}$ if it is topologically equivalent to a formal DGA. Note that given a graded ring $A$, there is a unique formal DGA with homology $A$.

\begin{example}\cite[Section 5]{dugger2007topological} \label{ex duggershipley} 
Up to quasi-isomorphism, there are only two DGAs with homology  $\Lambda_{\F_p}(x_{2p-2})$ where $\lv x_{2p-2} \rv = 2p-2$. One of these DGAs is the formal one. For the rest of this work, we denote the  non-formal DGA with homology $\Lambda_{\F_p}(x_{2p-2})$ by $Y$. For $p=2$, $Y$ is given by
\[\Z[e_1 \lv de_1 =2]/(e_1^4)\]
where $\lv e_1 \rv = 1$. 

 The interesting point of this example is that $Y$ and the formal DGA with homology $\Lambda_{\F_p}(x_{2p-2})$ are topologically equivalent although they are not quasi-isomorphic. In other words, $Y$ is topologically formal but it is not formal. 
\end{example}

We show that the following result is a  consequence of a theorem of Hopkins and Mahowald. Note that when we say an $\sph$-algebra is an $E_2$ $\sph$-algebra, we mean that the the corresponding object in the $\infty$-category of $\sph$-algebras is an $E_2$ $\sph$-algebra.

\begin{theorem} \label{thm e2dgassinglegenerator}
Let $X$ be a DGA with homology $\F_p[x]$. If the $\Sp$-algebra corresponding to $X$ is an $E_2$ $\Sp$-algebra, then $X$ is topologically formal. 
\end{theorem}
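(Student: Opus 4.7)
The plan is to reduce the problem from $\sph$-algebras to $\fp$-DGAs, where uniqueness of the DGA with homology $\fp[x]$ is already recorded in the introduction. The bridge is the theorem of Hopkins and Mahowald, which identifies $\hfp$ with the $E_2$ $\sph$-algebra freely generated by a nullhomotopy of $p$, i.e., $\hfp \simeq \sph\sslash_{E_2} p$.

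First I would apply this universal property to $R = HX$. Since $\pis(R) = \fp[x]$ is $\fp$-linear, the element $p \in \pi_0(\sph)$ acts as zero, and Hopkins-Mahowald supplies an $E_2$ $\sph$-algebra map $\hfp \to HX$ extending the unit. This upgrades $HX$ to an $E_2$ $\hfp$-algebra.

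Next I would forget the $E_2$ structure down to an associative ($E_1$) one and invoke Shipley's zig-zag of Quillen equivalences \cite{shipley2007hz} between associative $\hfp$-algebras and $\fp$-DGAs. This produces an $\fp$-DGA with homology $\fp[x]$. By the uniqueness of $\fp$-DGAs with homology $\fp[x]$ stated in the introduction (to be proved in the referenced section), this $\fp$-DGA is quasi-isomorphic to the free DGA on $\Sigma^{\lv x \rv}\fp$, namely the formal $\fp$-DGA with homology $\fp[x]$. Transporting back through Shipley's equivalence and then forgetting along $\sph \to \hfp$, we obtain an equivalence of $\sph$-algebras between $HX$ and the $\sph$-algebra associated to the formal DGA with homology $\fp[x]$. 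This is exactly the statement that $X$ is topologically formal.

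The main obstacle is really the application of the Hopkins-Mahowald theorem, which is the nontrivial input and the precise place where the $E_2$ hypothesis on $HX$ is used (no $E_1$ version is available, and indeed Example \ref{ex duggershipley} together with the classification of DGAs in Dwyer-Greenlees-Iyengar already show that the $E_1$ statement fails in the exterior case). Beyond this, one must check that Hopkins-Mahowald applies without any connectivity restriction on $HX$, so that no hypothesis on $\lv x \rv$ is needed, and that the chain of forgetful functors and Quillen equivalences faithfully transports the notion of topological formality; both of these should be routine once the $E_2$ $\hfp$-algebra structure is in hand.
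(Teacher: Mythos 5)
Your proposal is correct and follows essentially the same route as the paper: Hopkins--Mahowald (in the form $\hfp \simeq \modmod[E_2]{\sph}{p}$, quoted from Antol\'{\i}n-Camarena--Barthel) produces an $E_2$ map $\hfp \to HX$ from the vanishing of $p$ in $\pi_*(HX) = \F_p[x]$; this makes $HX$ an $\hfp$-algebra; Shipley's zig-zag transports it to an $\F_p$-DGA with homology $\F_p[x]$; and the free-algebra argument shows that $\F_p$-DGA is formal, hence $X$ is topologically formal. The paper packages the middle steps as Theorem~\ref{thm strongere2result} and then specializes, but the ideas are identical. One small imprecision in your write-up: an $E_2$ map of $\sph$-algebras $\hfp \to HX$ gives $HX$ the structure of an $E_1$ $\hfp$-algebra, not an $E_2$ one --- this is exactly what Lemma~\ref{lem e2 map makes e1 algebra} supplies with $n=1$ --- so the phrase ``upgrades $HX$ to an $E_2$ $\hfp$-algebra'' overstates what is available. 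Since you immediately forget down to associative $\hfp$-algebras and that is all Shipley's equivalence requires, this does not affect the argument, but the slogan should read ``associative $\hfp$-algebra.''
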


For homology with multiple polynomial generators, we have the following result.

\begin{theorem}\label{thm e2dgas}
Let $X$ be a DGA whose homology is a polynomial algebra over $\F_p$ with three or less generators which all lie in even degrees. If the $\Sp$-algebra corresponding to $X$ is an $E_2$ $\Sp$-algebra, then $X$ is topologically formal.
\end{theorem}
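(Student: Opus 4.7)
The plan is to extend the Hopkins--Mahowald strategy of Theorem \ref{thm e2dgassinglegenerator} to multiple generators. Write $H_*(X) = \F_p[x_1, \ldots, x_k]$ with $k \leq 3$ and each $|x_i|$ even and positive, and let $HY$ denote the $\Sp$-algebra corresponding to the formal DGA $Y = \F_p[x_1, \ldots, x_k]$. As in the single-generator case, since $p$ vanishes in $\pi_0(HX) = \F_p$, the theorem of Hopkins and Mahowald upgrades $HX$ to an $E_2$-$H\F_p$-algebra. The target $HY$ is naturally $E_\infty$ over $H\F_p$ (hence $E_2$ over $H\F_p$) and decomposes as a smash product $HY \simeq H\F_p[x_1] \wedge_{H\F_p} \cdots \wedge_{H\F_p} H\F_p[x_k]$, which is the model I would compare $HX$ against.

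Next, I would choose lifts $\tilde{x}_i \in \pi_{|x_i|}(HX)$ of the polynomial generators. Applying Theorem \ref{thm e2dgassinglegenerator} (or really its proof) to the $E_2$-$H\F_p$-subalgebra of $HX$ generated by $\tilde{x}_i$ yields $E_2$-$H\F_p$-algebra maps $\phi_i \colon H\F_p[x_i] \to HX$. The goal is to assemble them into a single $E_2$-$H\F_p$-algebra map $\phi \colon HY \to HX$; by construction $\phi$ would induce the identity $\F_p[x_1,\ldots,x_k] \to \F_p[x_1,\ldots,x_k]$ on homotopy, hence be a weak equivalence of underlying $\Sp$-algebras, exhibiting $X$ as topologically formal.

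The main obstacle is this assembly step. In the $\infty$-category of $E_2$-$H\F_p$-algebras the smash product is not the coproduct, so the maps $\phi_i$ do not glue automatically; one must show that the images $\phi_i(H\F_p[x_i])$ coherently $E_2$-commute inside $HX$. The obstructions to such coherence are controlled by $E_2$-Hochschild-type cohomology groups of $HY$, which sit in the computational regime alluded to in the abstract, where the B\"okstedt spectral sequence for $\thh(H\F_p)$ is used to evaluate such groups. The hypothesis $k \leq 3$ is what keeps the relevant obstruction groups small enough to vanish: the pairwise Browder brackets $[\tilde{x}_i, \tilde{x}_j]$ lie in odd degrees and are automatically killed by the vanishing of $\pi_*(HX)$ in odd degrees, and the higher coherence data can be straightened out directly when at most three generators are present. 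For $k \geq 4$ the growth of multilinear $E_2$-obstructions is expected to break the argument, consistent with the stated range of the theorem.
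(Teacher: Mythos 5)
Your strategy diverges from the paper's, and the divergence is where the gap lies. The paper does not attempt to build an $E_2$-map $HY \to HX$ by obstruction theory. Instead it proves the intermediate Theorem~\ref{thm strongere2result}: once Hopkins--Mahowald gives $U(X)$ an $H\F_p$-algebra structure (via Corollary~\ref{cor F is hfp algebra} and Lemma~\ref{lem e2 map makes e1 algebra}), $U(X)$ corresponds to an $\F_p$-DGA $Y$ with the same homology, and the remaining question is whether $Y$ is formal \emph{as an $\F_p$-DGA} -- an $E_1$/associative question, with no further $E_2$ input needed. The $k\leq 3$ formality of $\F_p$-DGAs with polynomial homology on even generators is then imported wholesale from Proposition 5.5 of Johnson--Noel \cite{johnson2014lifting}. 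Your proposal, by contrast, tries to redevelop that obstruction theory from scratch and at a higher level of structure than is required.

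Concretely, there are three problems. First, the "assembly step" you flag as the main obstacle is the entire content of the theorem, and the reasons you give for its success do not suffice: vanishing of Browder brackets by degree parity handles only the lowest $E_2$-coherence obstruction, whereas the full coherence data lives in an infinite tower of higher operations. What you would need is exactly the Johnson--Noel obstruction theory for lifting maps of commutative $T$-algebras, and the paper simply cites it. Second, even the single-generator maps $\phi_i\colon H\F_p[x_i]\to HX$ of $E_2$-$H\F_p$-algebras are not "free" to produce: the free $E_2$-$H\F_p$-algebra on a class in degree $|x_i|$ is much larger than $H\F_p[x_i]$ (it carries Dyer--Lashof-type operations), so an $E_2$-map out of $H\F_p[x_i]$ does not come for free from a choice of $\tilde x_i \in \pi_*(HX)$; this is yet another place where obstruction theory would enter. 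Third, your attribution of the $k\leq 3$ bound to a blow-up of $E_2$-obstruction groups at $k=4$ is speculative and not how the bound arises in the paper; the bound is inherited from what Johnson--Noel established, and nothing in the paper suggests a failure at $k\geq 4$. Finally, note that you are overshooting the target: topological formality only asks for a weak equivalence of underlying $\Sp$-algebras, not of $E_2$-algebras, which is precisely why reducing to $\F_p$-DGA formality after the Hopkins--Mahowald step is the cleaner route.
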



\begin{remark}\label{rmk e2dgas result in e2dgas in the infty category}
The second hypothesis in the theorems above is satisfied if $X$ is quasi-isomorphic to an $E_2$ DGA (in the model categorical setting or in the $\infty$-categorical setting).  Indeed, the underlying $\infty$-category of the model category of $E_n$ DGAs is equivalent to the $\infty$-category of   $E_n$-algebras in the  $\infty$-category of chain complexes \cite[4.1.1]{hinich2015rectificationofalgebrasandmodules}. Furthermore, there is an equivalence between the $\infty$-categories of $E_n$ DGAs and $E_n$ $\hz$-algebras induced by Shipley's zig-zag of symmetric monoidal Quillen equivalences between the model categories of chain complexes and $\hz$-modules \cite[4.4]{peroux2020coalgebras}. This equivalence ensures that the object in the $\infty$-category of $\sph$-algebras  corresponding to an $E_2$ DGA is an $E_2$ $\sph$-algebra. See the discussion at the beginning of Section \ref{sec hopkins mahowald} for more details. 
 \end{remark}

 We prove the following classification result for coconnective DGAs with exterior homology. 
 
 \begin{theorem}\label{thm exterior e2 DGAs}
For $\lv x \rv <0$, every $E_2$ DGA with homology $\Lambda_{\F_p}[x]$ is formal as a DGA.
\end{theorem}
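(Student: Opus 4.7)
The plan is to combine Moore--Koszul duality with Theorem~\ref{thm e2dgassinglegenerator}, and then upgrade topological formality to formality in this coconnective exterior setting. Let $X$ be an $E_2$ DGA with $H_*X \cong \Lambda_{\F_p}(x)$ and $|x| = n < 0$. Coconnectivity gives a canonical augmentation $X \to \F_p$; applying the Moore--Koszul duality of Dwyer, Greenlees and Iyengar yields a DGA $X^!$ with $H_*X^! \cong \F_p[y]$ and $|y| = -n - 1 \geq 0$, under a bijection of quasi-isomorphism classes that sends the formal exterior DGA to the formal polynomial DGA.

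Next I would verify that the $E_2$ structure on $X$ induces an $E_2$ structure on $X^!$, equivalently that the associated $\sph$-algebra $HX^!$ is $E_2$. This should follow from $E_n$-Koszul duality in the sense of Lurie and Francis: the Koszul dual of an augmented $E_n$-algebra is again an $E_n$-algebra, and this $\infty$-categorical statement refines the $E_1$-level bijection of Dwyer, Greenlees and Iyengar. Applying Theorem~\ref{thm e2dgassinglegenerator} to $X^!$ then gives that $X^!$ is topologically formal; since Moore--Koszul duality is visibly compatible with passage to $\sph$-algebras ($HX^!$ is computed as $R\mathrm{Hom}_{HX}(\hfp,\hfp)$), this propagates back to topological formality of $X$.

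The last step is to upgrade topological formality of $X$ to formality. For $|x| = -1$, Theorem~\ref{thm topeqCDVR} identifies topological equivalence classes of DGAs with homology $\Lambda_{\F_p}(x_{-1})$ with isomorphism classes of CDVRs with residue field $\F_p$, exactly matching the classification of quasi-isomorphism classes; thus topological formality of $X$ already forces formality. For $|x| < -1$, I would establish the analogous coincidence of topological equivalence and quasi-isomorphism by a Postnikov tower comparison: the groups classifying the $k$-invariants of the two-stage Postnikov tower of $X$ over $\hz$ and over $\sph$ agree in the relevant degree range because the strongly negative degree of the generator places any potential discrepancy outside the support of the relevant B\"okstedt-type spectral sequence computing $\thh(\hfp)$ already used in the paper.

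The main obstacle I anticipate is the second paragraph: matching the $\infty$-categorical $E_n$-Koszul duality with the concrete Dwyer--Greenlees--Iyengar construction so that the $E_2$ hypothesis on $X$ transfers cleanly to $X^!$ in the form required by Theorem~\ref{thm e2dgassinglegenerator}. Once this compatibility is pinned down and the Postnikov comparison in the third paragraph is carried out, Theorem~\ref{thm e2dgassinglegenerator} together with the classification results cited from the paper complete the argument.
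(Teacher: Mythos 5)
Your proposal diverges fundamentally from the paper's proof, and it has genuine gaps that the paper's route sidesteps entirely.

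The paper's argument is short: coconnectivity of $X$ plus the $E_2$ $\hz$-algebra structure produce a connective cover $\hfp \to X$ as a map of $E_2$ $\hz$-algebras (Lurie, \cite[7.1.3.11]{lurie2012higher}), and then Lemma~\ref{lem e2 map makes e1 algebra} upgrades this to an $\hfp$-algebra structure on $X$, i.e.\ $X$ is \emph{quasi-isomorphic} (not merely topologically equivalent) to an $\F_p$-DGA. After that, uniqueness of the $\F_p$-DGA with homology $\Lambda_{\F_p}(x)$ (via DGI duality for $|x|<-1$, and via the uniqueness of characteristic-$p$ CDVRs for $|x|=-1$) finishes the proof directly. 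Your plan never makes this observation, and as a consequence you are forced to pass through topological formality and then try to upgrade.

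That upgrade is where your argument breaks down. For $|x|<-1$ you propose ``a Postnikov tower comparison'' and an appeal to the B\"okstedt spectral sequence, but $X$ is coconnective ($\pi_* X$ is supported in degrees $|x|<0$ and $0$), so the Postnikov-extension machinery of Sections~\ref{sec Postnikov}--\ref{sec pstnkv and hochschild} — which is set up for connective $R$-algebras — does not apply. There is no analogue of Theorem~\ref{thm topeqCDVR} for $|x|<-1$ in the paper, and you would need to prove from scratch that topological equivalence forces quasi-isomorphism in this range; nothing in your sketch does that. Separately, the $|x|=-1$ case is mishandled: you assert $H_*X^! \cong \F_p[y]$ with $|y|=-n-1 \geq 0$, but DGI's Proposition 6.1 only covers $n<-1$; for $n=-1$ the Koszul dual is a CDVR concentrated in degree $0$, not a polynomial algebra, so the reduction to Theorem~\ref{thm e2dgassinglegenerator} is unavailable. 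Finally, the $E_2$-compatibility of the DGI duality with $E_n$-Koszul duality (Lurie/Francis) that you flag in your second paragraph is a genuine open technical step, not established in the paper, and would require careful matching of model-categorical and $\infty$-categorical constructions; the paper's route using a connective cover avoids this entirely.
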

Furthermore, we apply this theorem to the examples of non-formal DGAs with homology $\Lambda_{\F_p}(x_{-1})$ constructed in \cite{dwyer2013dg}. 
 
  \begin{corollary}\label{cor not e2}
  Let $X$ be a DGA with homology $\Lambda_{\fp}(x_{-1})$. If  $X$  is not formal then $X$ is not an $E_2$ DGA.
 Equivalently, for a complete discrete valuation ring  $A$ with residue field $\fp$ and characteristic different than $p$, the derived endomorphism DGA $\text{End}_{A\text{-mod}}(\F_p)$ is not an $E_2$ DGA. 
 \end{corollary}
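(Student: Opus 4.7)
My plan is to deduce both assertions directly from Theorem \ref{thm exterior e2 DGAs} and the Dwyer--Greenlees--Iyengar classification of DGAs with homology $\Lambda_{\fp}(x_{-1})$ recalled in the introduction. The first assertion is the contrapositive of Theorem \ref{thm exterior e2 DGAs} specialised to $\lv x \rv = -1$, which lies in the required range $\lv x \rv < 0$: the theorem says every $E_2$ DGA with homology $\Lambda_{\fp}(x_{-1})$ is formal, so a non-formal such DGA cannot admit an $E_2$ structure.

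For the equivalent reformulation I would invoke the Dwyer--Greenlees--Iyengar bijection between quasi-isomorphism classes of DGAs with homology $\Lambda_{\fp}(x_{-1})$ and isomorphism classes of complete discrete valuation rings $A$ with residue field $\fp$, implemented by $A \mapsto \text{End}_{A\text{-mod}}(\fp)$. The length-one free resolution $A \xrightarrow{\pi} A$ of $\fp = A/(\pi)$ computes the homology of the endomorphism DGA as $\Lambda_{\fp}(x_{-1})$ on the nose. Under this correspondence, the unique formal DGA matches the unique equi-characteristic CDVR with residue field $\fp$, namely $A = \fp[[t]]$: when $A$ contains $\fp$, the endomorphism DGA inherits an $\fp$-algebra structure, and an $\fp$-DGA with exterior cohomology of this shape is automatically formal. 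A CDVR whose characteristic differs from $p$ is therefore of mixed characteristic and not isomorphic to $\fp[[t]]$, so its endomorphism DGA is non-formal; the first part of the corollary then rules out an $E_2$ structure on it.

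No genuine difficulty should arise in this deduction, since the mathematical content is absorbed entirely by Theorem \ref{thm exterior e2 DGAs}. The only point that will require some care is the identification of the formal DGA with $A = \fp[[t]]$ under the Dwyer--Greenlees--Iyengar bijection, which I expect to read off directly from their explicit construction of the correspondence.
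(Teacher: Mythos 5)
Your proposal is correct and follows essentially the same route as the paper: the first statement is the contrapositive of Theorem \ref{thm exterior e2 DGAs}, and the second is obtained by identifying the formal DGA with the CDVR $\fp[[t]]$ under the Dwyer--Greenlees--Iyengar bijection (via the $\fp$-algebra structure on $\text{End}_{\fp[[t]]\textup{-mod}}(\fp)$) and noting that a CDVR of characteristic different from $p$ cannot be $\fp[[t]]$.
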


To obtain classification results without the $E_2$ assumption, we attack this  problem by calculating the quasi-isomorphism types of  Postnikov extensions which are classified by Hochschild cohomology groups. The main ingredient for these calculations is our knowledge of the topological equivalence type of $Y$ which allows us to carry out the Hochschild cohomology calculations. This is interesting because we obtain a purely algebraic result on DGAs by using ring spectra and topological equivalences of DGAs. This result is stated in the following theorem. Note that in the theorem below when we say unique, we mean unique up to quasi-isomorphisms.

\begin{theorem}\label{thm nontrivial}
 There is a unique DGA whose homology is  $\F_p[x_{2p-2} ]$ (with $\lv x_{2p-2} \rv = 2p-2$) and whose $2p-2$ Postnikov section is non-formal. Furthermore for  every $m>1$, there is a unique DGA whose  homology is $\F_p[x_{2p-2} ]/(x_{2p-2}^m)$ and whose  $2p-2$ Postnikov section is non-formal. 
\end{theorem}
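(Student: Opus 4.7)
The plan is to realise $X$ as the inverse limit of its Postnikov tower and pin down each Postnikov section inductively. Since $\F_p[x_{2p-2}]$ is concentrated in degrees divisible by $2p-2$ and $x^2$ lies in degree $4p-4 > 2p-2$, the $(2p-2)$-Postnikov section has $H_0 = \F_p$ and $H_{2p-2} = \F_p \cdot x$ with $x^2 = 0$, so $H_*(P_{2p-2}X) \cong \Lambda_{\F_p}(x_{2p-2})$. By Example \ref{ex duggershipley} there are only two quasi-isomorphism classes of DGAs with this homology, the formal one and $Y$, so the non-formality hypothesis forces $P_{2p-2}X \simeq Y$. The task then reduces to showing both existence and uniqueness (up to quasi-isomorphism) of a DGA $X$ with $H_*X = \F_p[x_{2p-2}]$ and $P_{2p-2}X \simeq Y$.

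I would proceed by induction on Postnikov stages, showing at each step that $P_{n(2p-2)}X$ exists and is uniquely determined by $P_{(n-1)(2p-2)}X$ together with the multiplicative relation $x^{n-1}\cdot x = x^n$ on homology. Quasi-isomorphism classes of DGA Postnikov extensions of a given DGA $B$ by an $H_*B$-bimodule concentrated in a single new degree are classified by a Hochschild cohomology group $\hh^{*,*}(B; \F_p)$ in the appropriate bidegree. The $k$-invariants realising the prescribed polynomial multiplication must be nonzero; the existence assertion is that at least one such class is available, and the uniqueness assertion is that all such non-trivial classes form a single orbit under $\aut_{\F_p}(\F_p) = \F_p^\times$ acting by rescaling the generator of the coefficient module.

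The main obstacle, and the place where topological methods are essential, is computing these Hochschild cohomology groups. Because the Postnikov sections $P_{n(2p-2)}X$ are non-formal, their Hochschild cohomology is not accessible from that of their homology algebras alone. The topological equivalence $HY \simeq H\Lambda_{\F_p}(x_{2p-2})$ furnished by Example \ref{ex duggershipley} implies $\thh^{\sph}(HY) \simeq \thh^{\sph}(H\Lambda_{\F_p}(x_{2p-2}))$, and the latter is accessible via Bökstedt's calculation of $\thh(H\F_p)$. Combined with a base-change spectral sequence relating Hochschild homology over $\Z$ to topological Hochschild homology together with its multiplicative structure, this feeds into a computation of the $\hh$-groups of $Y$ and its successive Postnikov extensions in the ranges relevant to the classification. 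Tracking the differentials and hidden extensions that witness non-formality, so as to pin down both the dimension of the target $\hh$-group and the specific class that realises polynomial multiplication, is the technical heart of the argument.

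For the truncated case $\F_p[x_{2p-2}]/(x^m)$ the same induction applies but terminates at stage $n = m-1$. The only additional input is that the $k$-invariant at the final stage must enforce $x^m = 0$ rather than a nonzero $x^m$, which merely restricts the ambient Hochschild computation by discarding certain classes at the top bidegree; uniqueness and existence then follow from essentially the same calculation.
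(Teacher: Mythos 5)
Your high-level strategy matches the paper's: climb the Postnikov tower, classify each stage by an Andr\'e--Quillen (equivalently Hochschild) cohomology group, and use the topological equivalence $HY \simeq H\Lambda_{\F_p}(x_{2p-2})$ of Example~\ref{ex duggershipley} to transport B\"okstedt differentials into the spectral sequence computing these groups over $\Z$. However, the two steps you defer as ``the technical heart'' are precisely where the content of the theorem resides, and they leave genuine gaps. First, you assert that the nontrivial $k$-invariants form a single $\F_p^\times$-orbit. This holds only because $\textup{Der}^{md+1}_{H\Z}(X,H\F_p) \cong \F_p$ (Corollary~\ref{cor aq cohomology groups of X}), which rests on the differential $d^{p-1}\gamma_k(\sigma\tau_0)=\gamma_{k-p}(\sigma\tau_0)\sigma x$ --- transported from the $\thh(H\F_p)$ calculation along the topological equivalence --- collapsing the $E^2$-page of the B\"okstedt spectral sequence for $\textup{THH}^{H\F_p}_*(H\F_p\wedge_{H\Z}X,H\F_p)$ to a truncated polynomial algebra below degree $md+2$ (Proposition~\ref{prop HHofX}). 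Without pinning this Der group down to dimension one, ``single orbit'' does not follow. Second, you take for granted that the nontrivial $k$-invariant realises $x^{n+1}\neq 0$ rather than the square-zero extension. This is Proposition~\ref{prop inductivestep} and requires its own argument: supposing the nontrivial extension $Z\to X$ had square-zero homotopy ring, a comparison of B\"okstedt spectral sequences shows the class $\varphi^m x$ is carried nontrivially under the induced map in $\textup{THH}^{H\F_p}_{md+2}$, so the $k$-invariant pulls back nontrivially along $\psi$; but the defining homotopy pullback square forces $\psi^*k_{md}$ to be trivial, a contradiction. This is not a formal consequence of the classification.

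A smaller misconception: in the truncated case $\F_p[x]/(x^m)$ there is no final $k$-invariant ``enforcing $x^m=0$''. The Postnikov tower simply terminates at stage $(m-1)(2p-2)$ because the homotopy vanishes above that degree, so $x^m=0$ holds automatically; the uniqueness argument is then identical to the polynomial case, run over a finite tower. Extending by the trivial $k$-invariant at stage $m(2p-2)$ would instead produce the square-zero ring $\F_p[x]/(x^m)\oplus\Sigma^{m(2p-2)}\F_p$, which has the wrong homology.
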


\begin{theorem} \label{thm hochschild}
Let $X$ be the non-formal DGA with homology $\F_p[x_{2p-2} ]$ given in Theorem \ref{thm nontrivial}. We have  \[\textup{HH}^{\Z}_*(X,\F_p) = \F_p[\mu]/(\mu^p) \ \text{with} \ \lvert \mu \rvert = 2.\]
\end{theorem}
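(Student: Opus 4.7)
The plan is to compute $\hh^{\Z}_*(X, \F_p)$ via a bar-type spectral sequence and identify the non-trivial differentials using the topological equivalence data underlying Theorem \ref{thm nontrivial}.

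The augmentation $X \to \F_p$ makes $\F_p$ into an $X$-bimodule, so one has the bar spectral sequence
\[ E^2_{s,t} = \torup^{H_*(X \wdgz X^{\textup{op}})}_{s,t}(\F_p, H_*X) \Longrightarrow \hh^{\Z}_{s+t}(X, \F_p). \]
Because $\Z$ has global dimension one, the Künneth spectral sequence for $X \wdgz X^{\textup{op}}$ collapses to
\[ H_*(X \wdgz X^{\textup{op}}) \cong \F_p[x_1, x_2] \ot_{\F_p} \Lambda_{\F_p}(e_1), \qquad |x_i| = 2p-2,\ |e_1|=1, \]
where $e_1$ detects $\torup^{\Z}_1(\F_p, \F_p)$. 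The multiplication map acts by $x_i \mapsto x$, $e_1 \mapsto 0$ on homology, and the augmentation kills every generator. Splitting Tor by the $\F_p$-Künneth formula, using the standard $2$-periodic resolution of $\F_p$ over $\Lambda(e_1)$ together with the Koszul resolution of $\F_p[x] = \F_p[x_1, x_2]/(x_1 - x_2)$ gives
\[ E^2 \cong \torup^{\Lambda(e_1)}(\F_p, \F_p) \ot_{\F_p} \torup^{\F_p[x_1, x_2]}(\F_p, \F_p[x]) \cong \Gamma_{\F_p}(\mu) \ot_{\F_p} \Lambda_{\F_p}(\eta), \]
with $|\mu|=2$ and $|\eta|=2p-1$. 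A bidegree count shows that the pairing $\gamma_{p+k}(\mu) \leftrightarrow \eta \cdot \gamma_k(\mu)$ has the bidegree of a $d_{p-1}$ differential.

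The heart of the proof is to establish that
\[ d_{p-1}(\gamma_p(\mu)) = c \cdot \eta \quad \text{for some } c \in \F_p^{\times}. \]
For the formal DGA with the same homology this differential vanishes---the Hochschild complex realises directly as $\Gamma(\mu) \ot \Lambda(\eta)$---so non-vanishing of $c$ is precisely the algebraic shadow of the Dugger--Shipley non-formality carried by $Y = P_{2p-2}X$. To verify $c \neq 0$ I would use the explicit model of $X$ underlying Theorem \ref{thm nontrivial} (for $p = 2$ this is $\Z[e_1 \mid de_1 = 2]$) to build a Hochschild cycle representing $\gamma_p(\mu)$ and evaluate its boundary; via the Shipley zig-zag that boundary corresponds, inside the Bökstedt spectral sequence for $\thh(\hfp)$, to a non-zero multiple of Bökstedt's generator $\mu \in \pi_2\thh(\hfp)$. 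Once $c \neq 0$ is known, multiplicativity of the bar spectral sequence together with the divided power relations $\gamma_i(\mu)\gamma_j(\mu) = \binom{i+j}{i}\gamma_{i+j}(\mu)$ on $\Gamma(\mu)$ propagate the differential to $d_{p-1}(\gamma_{p+k}(\mu)) = c \cdot \eta \cdot \gamma_k(\mu)$ for every $k \geq 0$, killing $\eta$ and every $\gamma_j(\mu)$ with $j \geq p$.

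The spectral sequence collapses at $E^p$ for bidegree reasons, with surviving classes exactly $1, \mu, \mu^2 = 2\,\gamma_2(\mu), \ldots, \mu^{p-1} = (p-1)!\,\gamma_{p-1}(\mu)$; the relation $\mu^p = p!\,\gamma_p(\mu) = 0$ in characteristic $p$ then identifies the ring structure with $\F_p[\mu]/(\mu^p)$, as claimed. The main obstacle is the non-vanishing of $d_{p-1}(\gamma_p(\mu))$; the remaining ingredients (the Künneth computation of $E^2$, the bidegree bookkeeping, and the propagation of the differential via multiplicativity and divided powers) are formal consequences.
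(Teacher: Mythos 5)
Your overall plan shares the paper's key insight---the nontrivial $d^{p-1}$ is inherited from Bökstedt's differential for $\thh(\hfp)$ through a topological equivalence---and the $E^2$-page bookkeeping for odd $p$ is correct. But there are three genuine gaps. \textbf{(i)} At $p=2$ the identification $\pi_*(X\wdgz X^{op})\cong\F_2[x_1,x_2]\ot\Lambda_{\F_2}(e_1)$ fails as a ring: there is a multiplicative extension $e_1^2=x_1+x_2$, the shadow of the extension $\xi_1^2=x$ in $\pi_*(\hft\wdgz X)\cong\F_2[\xi_1]$ established in Lemma~\ref{lemma homologyfor p eq 2}. (One sees this by pushing $e_1$ forward along the two ring maps killing $x_1$ and $x_2$ respectively.) With the corrected ring, $E^2\cong\Lambda_{\F_2}(\sigma e_1)$ and the spectral sequence collapses for bidegree reasons---which is forced anyway, since your putative $d^{p-1}=d^1$ is not a higher differential of a Tor spectral sequence indexed from $E^2$. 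Left uncorrected, your $\Gamma(\mu)\ot\Lambda(\eta)$ gives the wrong answer at $p=2$. \textbf{(ii)} Your propagation $d^{p-1}(\gamma_{p+k}(\mu))=c\,\eta\,\gamma_k(\mu)$ invokes multiplicativity of the spectral sequence, but $X$ is only an $E_1$ $\hz$-algebra, so its Hochschild spectral sequence carries no a priori ring structure. The paper deliberately avoids this: it constructs a map of $\hfp$-algebras $\eta\co\hfp\wdg\obhfp\to\hfp\wdgz Y$ out of the topological equivalence (Lemma~\ref{lemma map of underlying things}), notes that the source \emph{does} have a multiplicative Bökstedt spectral sequence (Bökstedt/Hunter), and transports the differentials across $\eta_*$ using only that $\eta_2$ is a ring map.

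\textbf{(iii)} The central assertion $d^{p-1}(\gamma_p(\mu))\neq 0$ is stated but not proved, and your proposed route---evaluate the boundary of an explicit Hochschild cycle built from an explicit DGA model of $X$---does not go through. For odd $p$ no such model is supplied (and the obvious candidate $\Z[e_1\mid de_1=p]$ has homology $\F_p[x_2]$, not $\F_p[x_{2p-2}]$), and the Shipley zig-zag furnishes no computable point-set comparison with the Bökstedt construction. Moreover the non-formality data is attached to the Postnikov section $Y=X[2p-2]$, not to $X$ itself, and there is no claim that $X$ is topologically equivalent to anything tractable. The paper's argument proceeds in two moves you would need to replicate: first, nonvanishing of $d^{p-1}\gamma_p(\sigma\tau_0)$ in $Y$'s spectral sequence is derived by contradiction---if it vanished, the degree-$0$ Postnikov map of $Y$ would pull the nontrivial $k$-invariant back to a trivial derivation, forcing $Y$ to be formal (Lemma~\ref{lem first nontrivial differential}); second, the remaining $d^{p-1}$'s are imported along $\eta_*$ (Lemma~\ref{lemma differentials for Y}) and then transferred to $X$'s spectral sequence via the Postnikov section map $X\to Y$ and naturality of the Tor splittings. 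These are precisely the steps your sketch leaves open.
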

It is interesting that we obtain these simple Hochschild homology groups. For example, for the formal DGA $Z$ with homology $\F_p[x_{2p-2}]$, we have  $\text{HH}^{\Z}_*(Z, \F_p) \cong \Gamma(y) \otimes \Lambda(z)$ where  $\lvert y \rvert =2$ and $\lvert z \rvert = 2p-1$, see Remark \ref{rmk hh of formal}. 

We use the B\"okstedt spectral sequence to calculate $\text{HH}^{\Z}_*(X,\F_p)$ where $X$ is as in Theorem \ref{thm hochschild}. Using topological equivalences, we show that the non-trivial differentials in the B\"okstedt spectral sequence calculating $\text{THH}_*(\F_p)$ carry into the spectral sequence calculating $\text{HH}^{\Z}_*(X,\F_p)$ and cause the cancellations that provides this simple result. Indeed, this is the way we carry out our Hochschild cohomology calculations to classify the Postnikov extensions at each level of the Postnikov tower of $X$.

\begin{example} \label{ex nontrivial}
Among other things, Theorem \ref{thm nontrivial} provides a  non-formal DGA with   homology $\F_p[x_{2p-2}]$. This DGA is not formal because its $2p-2$ Postnikov section is $Y$ which is not formal.
 Proposition 6.1 of \cite{dwyer2013dg} states that there is a bijection between quasi-isomorphism classes of DGAs with homology $\F_p[x_{2p-2}]$ and quasi-isomorphism classes of DGAs with homology $\Lambda_{\F_p}(z_{-(2p-1)})$ for $\lv z \rv = -(2p-1)$. In particular, we obtain that there is a non-formal DGA with homology $\Lambda_{\F_p}(z_{-(2p-1)})$.

\end{example}

\textbf{Organization} In Section \ref{sec hopkins mahowald} we prove Corollary \ref{cor not e2}, Theorems  \ref{thm e2dgassinglegenerator}, \ref{thm e2dgas}, \ref{thm exterior e2 DGAs} and \ref{thm topeqCDVR}. The rest of this work is independent from Section \ref{sec hopkins mahowald} and it is devoted to the proof of Theorems \ref{thm nontrivial} and \ref{thm hochschild}.    Section \ref{sec Postnikov} contains a review of  $k$-invariants for Postnikov extensions of ring spectra. In Section \ref{sec pstnkv and hochschild}, we use standard results to show that the $k$-invariants that we are interested in lie in certain Hochschild homology groups and discuss a few well known calculational tools for Hochschild homology. In Section  \ref{sec differentials of the bokstedt ss}, we compute the relevant differentials in the B\"okstedt spectral sequences of interest.  Section \ref{sec proof of the main theorems} is devoted to the proof of   
Theorems \ref{thm nontrivial} and  \ref{thm hochschild}. In Section \ref{sec homology lemmas}, we prove technical lemmas that provide the ring structures on the $\hfp$-homology of the $\hz$-algebras of interest. These technical lemmas are used in Sections \ref{sec differentials of the bokstedt ss} and \ref{sec proof of the main theorems}.

\textbf{Notation} In our notation, we don not distinguish between a DGA and the corresponding $H\Z$-algebra in most situations, i.e.\ we omit the functor $H$.

In Section \ref{sec Postnikov} and after, we work in the setting of  EKMM spectra \cite{elmendorf2007rings} but our final results do not depend on the model of spectra we use.  Most of the smash products we mention represent the corresponding derived smash products due to our q-cofibrancy assumptions. In the situations where the given smash product do not necessarily represent the derived smash product, this
will be explicitly indicated.

\textbf{Acknowledgments} I would like to thank Irakli Patchkoria for suggesting the nilpotence theorem of Hopkins and Mahowald for our classification problem. Also, I  thank Jeremy Hahn for helpful conversations. 
\section{Proof of the classification  results for $E_2$-algebras}\label{sec hopkins mahowald}
This section is devoted to the proof of our results that involve classification of DGAs with $E_2$-structure, i.e.\ we prove  Theorems  \ref{thm e2dgassinglegenerator}, \ref{thm e2dgas} and \ref{thm exterior e2 DGAs} as well as Corollary \ref{cor not e2}. At the end of this section, we prove Theorem \ref{thm topeqCDVR} which improves the main result of \cite{dwyer2013dg} to a classification of the topological equivalence classes of the DGAs considered in \cite{dwyer2013dg}.

In this section, we work in the $\infty$-categorical setting unless we state otherwise. On the other hand,  when we talk about spectra in a model categorical setting, we use the stable model structure on symmetric spectra in simplicial sets \cite{Hovey-Shipley-Smith}. It follows by \cite[4.1.8.4]{lurie2012higher} and \cite[3.4.2]{pavlov2019symmetricopSsp}  that we do not need to distinguish between the model categorical setting and the $\infty$-categorical setting when we are studying   equivalence classes of $\sph$-algebras or when we are studying equivalence classes of $Hk$-algebras where $k$ denotes a discrete commutative ring.

There are several reasons why we work in  $\infty$-categories. For instance, there is a zig-zag of symmetric monoidal Quillen equivalences between the model categories of chain complexes and $H\Z$-modules which provides a zig-zag of Quillen equivalences between the model categories of DGAs and $H\Z$-algebras \cite{shipley2007hz}.  However, we need a zig-zag of Quillen equivalences between the model categories of $E_n$ DGAs and $E_n$ $H\Z$-algebras for $n=2$ but this is not available in the literature; $n=\infty$ case is due to Richter and Shipley \cite{richter2014algebraic}.  

On the other hand, Shipley's zig-zag of symmetric monoidal Quillen equivalences induce an equivalence of symmetric monoidal $\infty$-categories between the $\infty$-categories of chain complexes and $H\Z$-modules \cite[4.4]{peroux2020coalgebras}. This induces the desired equivalence between the $\infty$-categories of $E_n$ $k$-DGAs and $E_n$ $Hk$-Algebras. This equivalence for $n=1$, together with  \cite[4.1.8.4]{lurie2012higher} implies that we do not need to distinguish between the $\infty$-categorical setting and the model categorical setting when we classify quasi-isomorphism classes or topological equivalence classes of $k$-DGAs. The model category of chain complexes in $k$-modules satisfy the hypothesis of \cite[4.1.8.4]{lurie2012higher} due to \cite[7.1.4.3]{lurie2012higher}, \cite[2.3.11]{Hovey1999book} and the discussion after Convention 2.2 in \cite{chuang2018derivedlocalisations}.

Furthermore,  Theorem 4.1.1 of \cite{hinich2015rectificationofalgebrasandmodules} implies that the underlying $\infty$-category of the model category of $E_n$ $k$-DGAs is equivalent to the  $\infty$-category of $E_n$-algebras in the $\infty$-category of  $k$-chain complexes. This allows us to obtain an $E_2$-algebra in the $\infty$-category of $Hk$-modules from a given object in the model category of $E_2$ DGAs.

Another reason we work in the $\infty$-categorical setting is the following. Given an $\Sp$-algebra $F$ and a commutative $\Sp$-algebra $E$ in  the model category of spectra, an $E$-algebra structure on $F$ is given by a central map $E \to F$. However, center of an $\Sp$-algebra in the model categorical setting  may not be homotopy invariant. On the other hand, in the $\infty$-category of spectra,  one may use an $E_2$-map $E\to F$ to construct an $E$-algebra structure on $F$. This is made precise in the following lemma.

\begin{lemma}\label{lem e2 map makes e1 algebra}
Let $R$ be a commutative ring spectrum and let $E \to F$ be a map of $E_{n+1}$ $R$-algebras for some  $n\geq 1$. In this situation, the underlying $E_n$ $R$-algebra of $F$ carries the structure of an $E_n$ $E$-algebra.
\end{lemma}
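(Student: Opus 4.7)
The plan is to deduce the statement from Lurie's theorems identifying $E_n$-algebras under an $E_{n+1}$-algebra with $E_n$-algebras in its modules. For a commutative ring spectrum $R$ and an $E_{n+1}$-algebra $E$ in $\text{Mod}_R$, the $\infty$-category $\text{Mod}_E$ of left $E$-modules inherits a canonical $E_n$-monoidal structure via the relative tensor product $-\otimes_E-$, as a consequence of Dunn additivity together with Lurie's analysis of modules over $E_k$-algebras in \cite[\S 5.1]{lurie2012higher}. This gives meaning to an $E_n$ $E$-algebra as an $E_n$-algebra object in $\text{Mod}_E$.

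Next, I would invoke the equivalence of $\infty$-categories
\[
\text{Alg}_{E_n}(\text{Mod}_E) \;\simeq\; \text{Alg}_{E_n}(\text{Mod}_R)_{E/}
\]
supplied by \cite[\S 5.1]{lurie2012higher}; here $E$ appears on the right as an $E_n$-algebra in $\text{Mod}_R$ by restriction of structure along the standard map of operads $E_n \to E_{n+1}$, and the equivalence is compatible with the forgetful functor to $\text{Alg}_{E_n}(\text{Mod}_R)$. The given map $E \to F$ of $E_{n+1}$ $R$-algebras restricts to a map of $E_n$ $R$-algebras, hence defines an object of $\text{Alg}_{E_n}(\text{Mod}_R)_{E/}$. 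Transporting across the equivalence yields an $E_n$-algebra in $\text{Mod}_E$ whose underlying $E_n$ $R$-algebra is $F$, which is exactly the desired $E_n$ $E$-algebra structure.

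The main obstacle is purely bookkeeping: one must verify that the $E_n$-monoidal structure on $\text{Mod}_E$ arising from the $E_{n+1}$-structure on $E$, together with the restriction from $E_{n+1}$ to $E_n$, interact with Lurie's equivalence in such a way that the underlying $E_n$ $R$-algebra of the output is indeed $F$. Both compatibilities are part of the formalism of \cite[\S 5.1]{lurie2012higher}, so once the matching of the relevant forgetful functors is understood, the argument reduces essentially to a citation.
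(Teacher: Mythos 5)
The first half of your argument is sound: if $E$ is an $E_{n+1}$ $R$-algebra, then $\mathrm{Mod}_E$ acquires an $E_n$-monoidal structure via \cite[5.1.2.6]{lurie2012higher}, so it makes sense to speak of $E_n$-algebras in $\mathrm{Mod}_E$. The problem lies in the claimed equivalence
\[
\mathrm{Alg}_{E_n}(\mathrm{Mod}_E) \;\simeq\; \mathrm{Alg}_{E_n}(\mathrm{Mod}_R)_{E/},
\]
with $E$ on the right regarded as an $E_n$-algebra by restriction. This equivalence is false, and the gap is not mere bookkeeping. Taking it at face value, any $E_n$-algebra map $E \to F$ (with $E$ secretly $E_{n+1}$) would already endow $F$ with the structure of an $E_n$ $E$-algebra; but that is a strictly stronger statement than the lemma and it fails already in ordinary algebra. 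For instance, with $R=k$ a field, $E=k[x]$ (commutative, hence $E_{n+1}$ for all $n$), and $F=M_2(k)$, the $E_1$-algebra map $k[x]\to M_2(k)$ sending $x$ to a non-central matrix defines an object of $\mathrm{Alg}_{E_1}(\mathrm{Mod}_k)_{E/}$, yet $F$ admits no compatible $E_1$-algebra structure in $\mathrm{Mod}_{k[x]}$: for the multiplication $F\otimes_E F \to F$ to be well defined one needs the image of $E$ to be central. What makes the lemma true is precisely that the map $E\to F$ is a map of $E_{n+1}$-algebras, which supplies exactly the centrality your transport argument silently discards when you restrict the map to $E_n$.

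The paper's proof uses the full $E_{n+1}$-structure of the map in an essential way: the map $E\to F$ of $E_{n+1}$-algebras induces an $E_n$-monoidal extension-of-scalars functor $\mathrm{Mod}_E\to\mathrm{Mod}_F$, whose right adjoint (the forgetful functor) is therefore lax $E_n$-monoidal by \cite[7.3.2.7]{lurie2012higher}. One then transports the unit $F\in\mathrm{Alg}_{E_n}(\mathrm{Mod}_F)$ along this lax $E_n$-monoidal functor to $\mathrm{Mod}_E$, and compatibility of the forgetful functors down to $\mathrm{Mod}_R$ identifies the underlying $E_n$ $R$-algebra with $F$. To repair your proposal you would need to replace the stated equivalence with one that genuinely records the $E_{n+1}$-structure of the map (for example, a statement about maps to a suitable $E_{n+1}$-center), or adopt the lax-monoidal-functor route.
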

\begin{proof}

 This follows as in \cite[3.7]{camarena2019simpleuniversal}. Since $E$ and $F$ are $E_{n+1}$ $R$-algebras, the $\infty$-categories of $E$-modules and $F$-modules are $E_n$-monoidal  $\infty$-categories \cite[5.1.2.6]{lurie2012higher}. Again due to  \cite[5.1.2.6]{lurie2012higher}, the left adjoint of the forgetful  functor from $F$-modules to $E$-modules is $E_n$-monoidal. Therefore, it follows by \cite[7.3.2.7]{lurie2012higher} that this forgetful functor is lax $E_n$-monoidal. 
 
 Furthermore, $R$ is the initial object in $E_{n+1}$ $R$-algebras \cite[3.2.1.8]{lurie2012higher}. In particular, we have a sequence of maps 
 \[ R \to E \to F\]
 of $E_{n+1}$ $R$-algebras. As before, the $\infty$-category of $R$-modules is a symmetric monoidal and therefore an $E_n$-monoidal $\infty$-category and the forgetful functors induced by the maps $R \to E$ and $R\to F$ above are lax $E_n$-monoidal. Note that a lax $E_n$-monoidal functor carries $E_n$-algebras to $E_n$-algebras.

 We obtain a commuting diagram of lax $E_n$-monoidal forgetful functors: 
 \begin{equation*}
     \begin{tikzcd}
     F\mhyphen \textup{modules} \ar[r] \ar[rd]& E\hmod \ar[d]\\
     & R\hmod.
     \end{tikzcd}
 \end{equation*}
 Since $F$ is the unit of the $E_n$-monoidal $\infty$-category $F$-modules, $F$ is an $E_n$-algebra in $F$-modules. Furthermore, it forgets to the underlying $E_n$-algebra of $F$  in $R\hmod$ (i.e.\ the underlying $E_n$ $R$-algebra of $F$ in the hypothesis of the lemma) through the forgetful functor induced by the map $R\to F$. 

 Carrying $F$ in $F$-modules to $E$-modules through the top horizontal arrow, we obtain an $E_n$-algebra in $E$-modules that also forgets to the underlying $E_n$-algebra of $F$ in $R$-modules because the diagram above commutes. In other words, the underlying $E_n$ $R$-algebra of $F$ carries the structure of an $E_n$ $E$-algebra as desired.
\end{proof}
\subsection{Proof of Theorems  \ref{thm e2dgassinglegenerator} and  \ref{thm e2dgas}}\label{sec subsection on proof of the e2 theorems}
 We start by introducing a version of the nilpotence theorem of Hopkins and Mahowald provided in  \cite[5.1]{camarena2019simpleuniversal},  this version of the nilpotence theorem is also proved in  \cite[4.16]{mathew2015nilpotenceconfecture}.

Let $\modmod[E_2]{\sph}{p}$ denote the pushout of the diagram 
\[\sph \xleftarrow{\cdot p} F_{E_2} \sph \xrightarrow{0}\sph\]
in $E_2$ $\sph$-algebras where $F_{E_2}-$ denotes the free functor from $\sph$-modules to $E_2$ $\sph$-algebras. Furthermore, the arrows denoted by $\cdot p$ and $0$ are the  adjoints of the maps $\sph \to \sph$ given by multiplication by $p$ and $0$ respectively.  Theorem 5.1 of \cite{camarena2019simpleuniversal} provides the following  characterization of the nilpotence theorem which states that $\hfp$ is the free $E_2$ $\sph$-algebra with $p=0$. 
\begin{theorem}
There is an equivalence of $E_2$ $\sph$-algebras 
\[\hfp \simeq \modmod[E_2]{\sph}{p}.\]
\end{theorem}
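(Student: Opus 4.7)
The plan is to produce a canonical comparison map via the universal property of the pushout, reinterpret the source as a Thom spectrum, and then invoke the classical Hopkins–Mahowald theorem in its Thom-spectrum form.

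First, I would construct the comparison map $\alpha : \modmod[E_2]{\sph}{p} \to \hfp$. Since $\hfp$ is an $E_\infty$ (hence $E_2$) $\sph$-algebra in which $p=0$, the two compositions $F_{E_2}\sph \rightrightarrows \sph \to \hfp$ coming from $\cdot p$ and $0$ are canonically equivalent in the $\infty$-category of $E_2$ $\sph$-algebras, since an $E_2$-map out of $F_{E_2}\sph$ is determined by the image of the generator in $\pi_0$. The universal property of the $E_2$-pushout then produces a unique $\alpha$ extending the identity of $\sph$.

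Second, I would realize $A := \modmod[E_2]{\sph}{p}$ as a Thom spectrum. By May's recognition principle, $F_{E_2}\sph$ corresponds to the suspension spectrum of the free $E_2$-space on a point, and the two $E_2$-maps $F_{E_2}\sph \rightrightarrows \sph$ classifying $p$ and $0$ can be encoded, after $p$-completion, as an $E_2$-map $f : \Omega^2 S^3 \to BGL_1(\sph_p)$ classifying the unit $1-p \in \pi_0 GL_1(\sph_p)$. Under the Thom-spectrum/$BGL_1$ adjunction in the $E_2$ setting, the pushout $A$ identifies with the Thom $E_2$ $\sph$-algebra $M\!f$ of this map.

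Third, invoke the Hopkins–Mahowald theorem, which asserts $M\!f \simeq \hfp$ as $E_2$ $\sph$-algebras. The original argument computes the mod-$p$ homology of $M\!f$ using the Thom isomorphism and recognizes it as the dual Steenrod algebra, and then uses Nishida's nilpotence theorem (or its refinement via the Hopkins–Smith nilpotence theorem, as in Mathew's version in \cite{mathew2015nilpotenceconfecture}) to upgrade this homological equivalence to an equivalence of $E_2$-ring spectra. One then checks that the resulting equivalence agrees with $\alpha$, which is formal since both maps are $E_2$-algebra maps out of the same pushout.

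The main obstacle is the Thom-spectrum reinterpretation: one must precisely match the $E_2$-structure on the pushout with that of $M\!f$, and one must handle the shift from $p$ to the unit $1-p$ (which forces $p$-completion). Once $A \simeq M\!f$ is established at the $E_2$ level, the deep input of Hopkins–Mahowald takes over and the compatibility check with $\alpha$ is automatic.
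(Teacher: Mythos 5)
The paper does not prove this theorem: it is quoted directly from Antol\'{\i}n-Camarena--Barthel \cite[Theorem 5.1]{camarena2019simpleuniversal} (and the equivalent statement in Mathew--Naumann--Noel \cite[Theorem 4.16]{mathew2015nilpotenceconfecture}). So there is no ``paper's own proof'' to compare against; what you have written is a sketch of how one of the cited proofs goes, and that is worth evaluating on its own terms.

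Your sketch is a recognizable outline of the Antol\'{\i}n-Camarena--Barthel argument, but a few steps are stated loosely enough that they would not survive scrutiny as written. First, $F_{E_2}\sph$ is the suspension spectrum of the free $E_2$-space on a \emph{point}, whose group completion is $\Omega^2 S^2$, whereas $\Omega^2 S^3$ is the free grouplike $E_2$-space on $S^1$; the passage from one to the other is exactly the degree shift built into the $BGL_1$-parametrization of Thom spectra, and this needs to be said rather than elided as ``can be encoded.'' Second, the identification of $\modmod[E_2]{\sph}{p}$ with a Thom spectrum is not a formality of the ``Thom-spectrum/$BGL_1$ adjunction''; it \emph{is} the main theorem of \cite{camarena2019simpleuniversal} (their universal property of Thom ring spectra), so invoking it silently hides the real content. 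Third, the class $1-p$ is a unit in $\pi_0 \sph$ only after $p$-localization (for $p$ odd), so the Thom-spectrum model is inherently $p$-local, while the pushout $\modmod[E_2]{\sph}{p}$ is constructed over the integral sphere; one has to argue that the latter is automatically $p$-local (e.g., because $\pi_0$ is already $\F_p$ and the construction is compatible with localization) before the comparison makes sense. Finally, checking that the composite equivalence agrees with your $\alpha$ is not purely formal: two $E_2$-maps out of a pushout with the same value on the apex need not coincide unless the rest of the universal-property data is also matched, which again is where the Thom-spectrum universal property does its work. In short, the strategy is the right one and matches the reference the paper cites, but each of the three ``steps'' conceals a substantive theorem or a nontrivial reduction that you would need to spell out for this to count as a proof.
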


  Considering the pushout diagram defining $\modmod[E_2]{\sph}{p}$, one obtains the following corollary of the theorem above.
\begin{corollary}
Let $E$ be an $E_2$ $\sph$-algebra with $p=0$ in $\pi_*(E)$, then there is a  map
\[\hfp \to E\]
of $E_2$ $\sph$-algebras.
\end{corollary}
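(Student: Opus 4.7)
The plan is to deduce the corollary directly from the pushout presentation of $\hfp$ as an $E_2$ $\sph$-algebra. By the preceding theorem there is an equivalence $\hfp \simeq \modmod[E_2]{\sph}{p}$, where the right-hand side is the pushout of
\[\sph \xleftarrow{\cdot p} F_{E_2}\sph \xrightarrow{0} \sph\]
in $E_2$ $\sph$-algebras. By the universal property of pushouts in the $\infty$-category of $E_2$ $\sph$-algebras, to produce a map $\hfp \to E$ it suffices to produce two maps $\sph \to E$ of $E_2$ $\sph$-algebras together with a homotopy (in $E_2$ $\sph$-algebras) between their restrictions along $\cdot p$ and $0$ respectively.

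First I would observe that $\sph$ is the initial object in $E_2$ $\sph$-algebras, so there is an essentially unique map of $E_2$ $\sph$-algebras $\eta\co \sph \to E$, namely the unit. I take both legs of the cospan $E \leftarrow \sph$ to be $\eta$. It then remains to exhibit a homotopy between the two composites $F_{E_2}\sph \rightrightarrows \sph \xrightarrow{\eta} E$. Applying the free$\dashv$forget adjunction between $E_2$ $\sph$-algebras and $\sph$-modules, these two composites correspond, respectively, to the $\sph$-module maps $p\cdot\eta, 0 \co \sph \to E$. The difference $p\cdot \eta$ is precisely the element $p \in \pi_0(E)$ viewed as a map $\sph \to E$, and it is null-homotopic exactly because $p=0$ in $\pi_*(E)$ by hypothesis.

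The main (and only) nontrivial input is thus the observation that ``$p=0$ in homotopy'' is the \emph{same} datum as a nullhomotopy of the relevant map of $\sph$-modules, and under the adjunction this becomes a nullhomotopy of maps of $E_2$ $\sph$-algebras. Feeding this homotopy into the universal property of $\modmod[E_2]{\sph}{p}$ yields a map of $E_2$ $\sph$-algebras $\modmod[E_2]{\sph}{p} \to E$, which under the equivalence with $\hfp$ gives the desired map $\hfp \to E$. Note that we do not claim this map is unique; only its existence is required.
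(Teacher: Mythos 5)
Your proposal is correct and fills in exactly the argument the paper compresses into ``considering the pushout diagram defining $\modmod[E_2]{\sph}{p}$, one obtains the following corollary'': use initiality of $\sph$ to produce the two legs of the cocone, then use the free-forget adjunction to convert the required homotopy between the composites $F_{E_2}\sph \rightrightarrows \sph \to E$ into a nullhomotopy of the $\sph$-module map $p\cdot\eta\co\sph\to E$, which exists precisely because $p=0$ in $\pi_0(E)$. This is the same approach as the paper, just spelled out in full.
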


Combining this corollary with Lemma \ref{lem e2 map makes e1 algebra}, we obtain the following corollary.

\begin{corollary}\label{cor F is hfp algebra}
Let $F$ be an $E_2$ $\sph$-algebra with $p=0$ in $\pi_*(F)$, then the underlying $\sph$-algebra of $F$ carries the structure of an  $\hfp$-algebra.
\end{corollary}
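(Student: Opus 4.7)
The plan is to combine the two preceding results essentially as a formal consequence, so the proof proposal is short. First I would invoke the preceding corollary (the characterization of $\hfp$ as $\modmod[E_2]{\sph}{p}$) applied to $F$: since $F$ is an $E_2$ $\sph$-algebra in which $p$ acts by zero on homotopy, there exists a map
\[ \hfp \longrightarrow F \]
of $E_2$ $\sph$-algebras. This is the only place where the hypothesis $p=0$ in $\pi_*(F)$ enters.

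Next I would feed this $E_2$-algebra map into Lemma \ref{lem e2 map makes e1 algebra} with $R = \sph$, $E = \hfp$, and $n=1$. The lemma then promotes the underlying $E_1$ $\sph$-algebra of $F$ to an $E_1$ $\hfp$-algebra, whose underlying $E_1$ $\sph$-algebra agrees with $F$ (this compatibility is built into the statement of the lemma, where the new $E_n$ $E$-algebra structure refines the given $E_n$ $R$-algebra structure).

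Finally, I would note that an $E_1$-algebra in $\hfp$-modules is, by definition, an $\hfp$-algebra in the $\infty$-categorical sense, and similarly for $\sph$. Hence the construction produces an $\hfp$-algebra structure on the underlying $\sph$-algebra of $F$, which is exactly what the corollary claims.

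There is no real obstacle here: the two cited results do all the work, and the only subtlety to flag is that one must use $n=1$ in Lemma \ref{lem e2 map makes e1 algebra} (so that the hypothesis requires an $E_{n+1}=E_2$ map, matching what the nilpotence theorem provides), and that the forgetful compatibility in the conclusion of the lemma is what guarantees the new $\hfp$-algebra structure refines the original $\sph$-algebra structure rather than merely existing abstractly.
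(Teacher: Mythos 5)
Your proof is correct and matches the paper's argument exactly: the paper also obtains the $E_2$ $\sph$-algebra map $\hfp \to F$ from the preceding corollary and then applies Lemma \ref{lem e2 map makes e1 algebra} with $R=\sph$, $E=\hfp$, $n=1$. Nothing further is needed.
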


To prove Theorems \ref{thm e2dgassinglegenerator} and \ref{thm e2dgas}, we first prove the stronger result stated in the following theorem.
\begin{theorem} \label{thm strongere2result}
Let $X$ be a DGA whose homology ring is an $\F_p$-algebra $A$ and assume that the $\Sp$-algebra corresponding to $X$ is an $E_2$ $\Sp$-algebra. If every $\F_p$-DGA with homology ring $A$ is formal, then $X$ is topologically formal.
\end{theorem}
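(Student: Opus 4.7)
The plan is to feed the hypothesis that $HX$ is an $E_2$ $\Sp$-algebra with $\F_p$-linear homotopy into the Hopkins--Mahowald machinery set up just before the theorem, use it to refine $HX$ to an $H\F_p$-algebra, and then invoke Shipley's $\F_p$-variant of the zig-zag of symmetric monoidal Quillen equivalences to reduce the statement to the hypothesis on $\F_p$-DGAs.

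First I would observe that, since the homology ring $A$ is by assumption an $\F_p$-algebra, the prime $p$ acts as zero on $\pi_*(HX) = H_*(X) = A$. Combined with the $E_2$-hypothesis, this is exactly the input for Corollary~\ref{cor F is hfp algebra}, and so I would apply that corollary to produce an $H\F_p$-algebra $\widetilde R$ whose underlying $\Sp$-algebra is (equivalent to) $HX$. At this point all that matters is the $H\F_p$-algebra structure and the induced weak equivalence of $\Sp$-algebras between $\widetilde R$ and $HX$; any particular choice of $H\F_p$-algebra structure coming from the Hopkins--Mahowald map $H\F_p \to HX$ is fine.

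Next I would use the $\infty$-categorical equivalence between $H\F_p$-algebras and $\F_p$-DGAs induced by Shipley's zig-zag of symmetric monoidal Quillen equivalences (as spelled out at the start of Section~\ref{sec hopkins mahowald}) to transport $\widetilde R$ to an $\F_p$-DGA $X'$. Since this equivalence preserves homotopy rings, $H_*(X') \cong A$ as $\F_p$-algebras. Invoking the main hypothesis of the theorem, $X'$ must then be formal as an $\F_p$-DGA, so there is a chain of quasi-isomorphisms of $\F_p$-DGAs between $X'$ and the formal $\F_p$-DGA $Z \coloneqq (A,0)$.

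Finally I would push this equivalence back through the same $\infty$-categorical equivalence to get an equivalence of $H\F_p$-algebras $\widetilde R \simeq HZ$, and then forget along the lax symmetric monoidal functor from $H\F_p$-algebras to $\Sp$-algebras (which preserves weak equivalences) to conclude $HX \simeq HZ$ as $\Sp$-algebras. Since $Z = (A,0)$ viewed as a $\Z$-DGA is the formal $\Z$-DGA with homology $A$, this exhibits $X$ as topologically formal. The step I expect to be the main (mild) obstacle is the bookkeeping in the third paragraph: making sure that the $H\F_p$-algebra structure produced by Hopkins--Mahowald can be transported through Shipley's $\F_p$-zig-zag to an honest $\F_p$-DGA and that the underlying $\Sp$-algebra really is recovered after the round trip through $H\F_p$-algebras, so that the formality of $X'$ translates into a topological equivalence between $X$ and the formal $\Z$-DGA on $A$.
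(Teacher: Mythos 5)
Your proposal is correct and follows essentially the same route as the paper's proof: both reduce $p=0$ from the $\F_p$-algebra hypothesis on $A$, apply Corollary~\ref{cor F is hfp algebra} to promote the underlying $\Sp$-algebra of $X$ to an $H\F_p$-algebra, transport through Shipley's equivalence to an $\F_p$-DGA, invoke the formality hypothesis there, and pass back to conclude topological formality. The paper's argument is a slightly terser version of the same bookkeeping.
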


\begin{proof}[Proof of Theorem \ref{thm strongere2result}]

Let $U(X)$ denote the $\sph$-algebra corresponding to $X$.
By hypothesis,  $U(X)$ is an $E_2$ $\Sp$-algebra. Since $ \pi_*U(X) = H_*X =  A$ is an $\fp$-algebra, we have $p=0$ in $\pi_0U(X)$. It follows from Corollary \ref{cor F is hfp algebra} that $U(X)$ carries an  $\hfp$-algebra structure. Let $Y$ denote the $\fp$-DGA corresponding to a chosen $\hfp$-algebra structure on $U(X)$. As an $\fp$-DGA, the homology of $Y$ is given by: 
\[H_*Y = \pis U(X) = H_*X = A.\]
Our hypothesis states that every $\fp$-DGA with homology $A$ is formal; we deduce that $Y$ is formal as an $\fp$-DGA. By construction, the underlying $\sph$-algebra of the $\hfp$-algebra corresponding to $Y$ is $U(X)$. Therefore, $X$ is topologically equivalent to $Y$ by construction. Since $Y$ is formal, we deduce that $X$ is topologically formal.
\end{proof}

To deduce Theorem \ref{thm e2dgassinglegenerator} and Theorem \ref{thm e2dgas} from Theorem \ref{thm strongere2result} we need to show that if $A$ is the homology of a DGA satisfying the hypothesis of Theorem \ref{thm e2dgassinglegenerator} or Theorem \ref{thm e2dgas}, then every $\F_p$-DGA with homology $
A$ is formal. 

For Theorem \ref{thm e2dgassinglegenerator}, this follows by a standard observation. In this case, $A= \F_p[x]$. Let $Z$ be an $\F_p$-DGA with homology $\F_p[x]$. One can show that there is a map $x \co \Sigma^{\lv x \rv}\F_p \to Z$ of $\F_p$-chain complexes  realizing $x$ in homology. Here,   $\Sigma^{\lv x \rv} \F_p$ denotes the chain complex that is trivial in all degrees except degree $n$ where it is given by $\fp$. By adjunction, there is a map  $\psi\co T(\Sigma^{\lv x \rv} \F_p) \to Z$ of $\F_p$-DGAs where $T\co \F_p\text{-chains} \to \F_p\text{-DGAs}$ denotes the free functor. We have 
\[T(\Sigma^{\lv x \rv} \F_p) \cong \bigoplus_{i \geq 0}(\Sigma^{\lv x \rv} \F_p)^{\otimes_{\F_p} i} \cong  \bigoplus_{i \geq 0}\Sigma^{\lv x \rv i}\F_p.\]
Therefore, the homology of $T(\Sigma^{\lv x \rv} \F_p)$ is also $\F_p[x]$ and $\psi$ induces an isomorphism in homology. Since $T(\Sigma^{\lv x \rv} \F_p)$ carries trivial differentials, we deduce that $Z$ is formal as it is quasi-isomorphic to $T(\Sigma^{\lv x \rv} \F_p)$. This proves Theorem \ref{thm e2dgassinglegenerator}.

For Theorem \ref{thm e2dgas}, we refer to Proposition 5.5 of \cite{johnson2014lifting}. This proposition states that 
every $\F_p$-DGA whose homology  is  a polynomial algebra with up to three generators in even degrees is quasi-isomorphic to the formal $\F_p$-DGA with the given homology. This is precisely the statement one needs in order to obtain Theorem \ref{thm e2dgas} from Theorem \ref{thm strongere2result}.

\subsection{Classification results for coconnective DGAs}\label{sec coconnective dgas}
Here, we prove Theorem \ref{thm exterior e2 DGAs}, Theorem \ref{thm topeqCDVR} and Corollary \ref{cor not e2}. We start by giving a short description of the main result of \cite{dwyer2013dg}. 

As mentioned before, the main result of \cite{dwyer2013dg} provides a bijection between quasi-isomorphism classes of DGAs with homology $\Lambda_{\F_p}(x_{-1})$ and  isomorphism classes of complete discrete valuation rings (CDVRs) with residue field $\F_p$. For a DGA $Z$ with homology $\Lambda_{\F_p}(x_{-1})$, the corresponding CDVR is given by the degree zero homology of $\text{End}_{Z\textup{-mod}}(\F_p)$ where $\text{End}_{Z\textup{-mod}}(-)$ denotes the derived endomorphism DGA in the model category of $Z$-modules. Similarly for a given CDVR $A$ with residue field $\F_p$, the corresponding DGA is given by the derived endomorphism DGA  $\text{End}_{A\textup{-mod}}(\F_p)$ of $\F_p$ in the model category of differential graded $A$-modules. 
\begin{example}(\cite[Remark 3.3]{dwyer2013dg})
For a CDVR $A$ with residue field $\fp$ and maximal ideal $(\pi)$, there is a projective resolution: 
\[0 \to A \xrightarrow{\cdot \pi} A \to 0, \]
of $\fp$ in $A$-modules. This provides an explicit description of the DGA $\text{End}_{A\textup{-mod}}(\F_p)$  given by the chain complex:
\[0\to \langle L \rangle \xrightarrow{\partial L = \pi D_1 + \pi D_2} \langle D_1, D_2 \rangle \xrightarrow{\partial D_i = (-1)^{i}\pi U}\langle U \rangle\to 0, \]
concentrated in degrees $-1,0$ and $1$. Here, $\langle - \rangle$ denotes the free $A$-module with the given generators and the multiplication on this chain complex is given by the multiplication of the following matrices.  
\begin{equation*}
L = 
\begin{pmatrix}
0 & 0\\
1 & 0
\end{pmatrix}\ 
D_1 = \begin{pmatrix}
1 & 0\\
0 & 0 
\end{pmatrix}\ 
D_2 = \begin{pmatrix}
0 & 0 \\
0 & 1 
\end{pmatrix}\ 
U = \begin{pmatrix}
0 & 1 \\
0 & 0 
\end{pmatrix}
\end{equation*}
\end{example}
\begin{example}
It is also interesting to consider the case $A= \Z$ although $\Z$ is not a CDVR. We compare the DGA corresponding to $\z$ with the DGA corresponding to the ring of $p$-adic integers $\z_p$. Note that $\z_p$ is a CDVR with residue field $\fp$. In this case, an elementary Ext computation shows that the left Quillen functor $\Z_p \otimes_{\z}-$ from chain complexes of $\Z$-modules to chain complexes of $\Z_p$-modules results in  a quasi-isomorphism 
\[\text{End}_{\Z\textup{-mod}}(\F_p) \we \text{End}_{\Z_p\textup{-mod}}(\F_p)\]
of DGAs. In other words, the DGAs corresponding to $\z$ and  $\Z_p$  are quasi-isomorphic. This shows that the correspondence described above is no longer a bijection if one considers rings that are not CDVRs. 
\end{example}

For the convenience of the reader, we provide a restatement of  Theorem \ref{thm exterior e2 DGAs} below. 
\vspace{0.3cm}
\begin{thmn}[\ref{thm exterior e2 DGAs}]
For $\lv x \rv <0$, every $E_2$ DGA with homology $\Lambda_{\F_p}[x]$ is formal as a DGA.
\end{thmn}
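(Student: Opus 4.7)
My plan is to combine the Hopkins--Mahowald lifting from Corollary \ref{cor F is hfp algebra} with a direct $A_\infty$-classification of $\F_p$-DGAs whose homology is $\Lambda_{\F_p}(x)$, and then invoke Theorem \ref{thm topeqCDVR} (or an extension of it) to upgrade topological formality to honest formality.

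First, since $X$ is an $E_2$ DGA with $H_*X=\Lambda_{\F_p}(x)$, the underlying $\sph$-algebra of $HX$ is an $E_2$ $\sph$-algebra with $p=0$ in homotopy. Corollary \ref{cor F is hfp algebra} then produces an $\hfp$-algebra structure on this $\sph$-algebra. I let $Y$ denote the corresponding $\F_p$-DGA, whose homology is again $\Lambda_{\F_p}(x)$, and observe that $X$ and $Y$ are topologically equivalent by construction.

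Next I would show that $Y$ is formal as an $\F_p$-DGA via Kadeishvili's minimal $A_\infty$-model: $Y$ is quasi-isomorphic to $H=\Lambda_{\F_p}(x)$ equipped with operations $m_k\co H^{\otimes k}\to H$ of homological degree $k-2$. The unital $A_\infty$-constraints reduce the analysis to $m_k(x,\dots,x)$, which lies in degree $k(|x|+1)-2$. For $|x|<0$ and $k\geq 3$, one checks directly that neither $k(|x|+1)=2$ nor $k(|x|+1)=|x|+2$ admits an integer solution with $k\geq 3$, so every higher $m_k$ vanishes for degree reasons. Hence $Y$ is formal as an $\F_p$-DGA, and therefore also as a $\Z$-DGA; it follows that $X$ is topologically equivalent to the formal $\Z$-DGA with homology $\Lambda_{\F_p}(x)$.

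Finally, I would upgrade topological formality to formality. For $|x|=-1$ this is immediate from Theorem \ref{thm topeqCDVR}, since topological-equivalence classes and quasi-isomorphism classes of DGAs with this homology are both in bijection with CDVRs having residue field $\F_p$, so the two notions of formality coincide. For $|x|<-1$ I would transport the problem across the Moore--Koszul duality of \cite{dwyer2013dg} to the polynomial side. This last step is the main obstacle: as Example \ref{ex duggershipley} shows, topologically equivalent $\Z$-DGAs need not be quasi-isomorphic, so extra input is needed in the range $|x|<-1$. The cleanest route I would pursue is to refine the DGI quasi-isomorphism bijection into a bijection on topological-equivalence classes (in the spirit of Theorem \ref{thm topeqCDVR}), so that topological formality on the exterior side can be detected and transported across the duality.
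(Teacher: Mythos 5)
Your outline correctly identifies the gap in its own reasoning: after applying Corollary~\ref{cor F is hfp algebra}, which produces an $\hfp$-algebra structure on the underlying $\sph$-algebra, you have only established that $X$ is \emph{topologically} formal, and you do not close the gap for $|x|<-1$. The route you propose --- promoting the Dwyer--Greenlees--Iyengar quasi-isomorphism bijection to a bijection of topological equivalence classes on the polynomial side --- is not carried out, and in fact the paper's Theorems~\ref{thm nontrivial} and \ref{thm hochschild} show that the polynomial side has a genuinely non-formal DGA, so any such transport would require substantial additional work. As written the argument is incomplete for $|x|<-1$.

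The step you are missing is coconnectivity. Because $\pi_*X=\Lambda_{\F_p}(x)$ with $|x|<0$, the $E_2$ $H\Z$-algebra $X$ is coconnective with $\pi_0X=\F_p$, and hence by \cite[7.1.3.11]{lurie2012higher} its connective cover is a map $H\F_p\to X$ of $E_2$ $H\Z$-algebras. Applying Lemma~\ref{lem e2 map makes e1 algebra} with $R=H\Z$ (rather than $R=\sph$, which is all Corollary~\ref{cor F is hfp algebra} does) shows that $X$ is weakly equivalent \emph{as an $H\Z$-algebra} to an $H\F_p$-algebra. This means $X$ is quasi-isomorphic as a $\Z$-DGA to an $\F_p$-DGA, so once you know that every $\F_p$-DGA with homology $\Lambda_{\F_p}(x)$ is formal, you get honest formality of $X$ with no appeal to topological equivalences, and no upgrade step is needed. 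In the paper this is exactly how Theorem~\ref{thm exterior e2 DGAs} is proved; Corollary~\ref{cor F is hfp algebra} is reserved for Theorems~\ref{thm e2dgassinglegenerator} and \ref{thm e2dgas}, where coconnectivity fails and one can in fact only expect topological formality.

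One genuinely different ingredient in your proposal is worth flagging: your Kadeishvili-style degree count, showing all $m_k$ ($k\geq 3$) vanish on a minimal $A_\infty$-model of $\Lambda_{\F_p}(x)$ for $|x|<0$, is a correct and more elementary way to see that every $\F_p$-DGA with this homology is formal. The paper instead transports across the Moore--Koszul duality for $|x|<-1$ and uses the classification of CDVRs of characteristic $p$ for $|x|=-1$. Your $A_\infty$ argument handles all $|x|<0$ at once without invoking the duality, which is a nice simplification of that portion of the proof; the problem lies entirely in the preceding reduction to $\F_p$-DGAs.
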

\vspace{0.1 cm}
\begin{proof}
Let $X$ be the $H\Z$-algebra corresponding to an $E_2$ DGA with homology $\Lambda_{\fp}(x)$ where $\lv x \rv<0$. As mentioned before, \cite[4.1.1]{hinich2015rectificationofalgebrasandmodules} and \cite[4.4]{peroux2020coalgebras} implies that $X$ is an $E_2$ $H\Z$-algebra. Since $\pi_*X = \Lambda_{\F_p}(x)$ with $\lv x \rv <0$, $X$ is coconnective. Therefore, $X$ admits a connective cover given by a map \[H\F_p \to X\] of $E_2$ $H\Z$-algebras \cite[7.1.3.11]{lurie2012higher}.  It follows by Lemma \ref{lem e2 map makes e1 algebra} that  $X$ is weakly equivalent as an $H\Z$-algebra to an $H\F_p$-algebra. Therefore, what remains is to show that every $\F_p$-DGA with homology $\Lambda_{\F_p}(x)$ is formal as a DGA. 

For $\lv x \rv <-1$, we use Proposition 6.1 of \cite{dwyer2013dg}. This states that there is a bijection between the  quasi-isomorphism classes of DGAs with homology $\Lambda_{\F_p}(x)$ and the  quasi-isomorphism classes of DGAs with homology $\F_p[u]$ where $\lv u \rv = -\lv x \rv -1$. The DGA with homology $\F_p[u]$ corresponding to a DGA $Z$ with homology $\Lambda_{\F_p}(x)$ is given by $\text{End}_{Z\textup{-mod}}(\F_p)$ where $\text{End}$ denotes the derived endomorphism DGA as before. If $Z$ is an $\F_p$-DGA  then the corresponding DGA $\text{End}_{Z\textup{-mod}}(\F_p)$ is also an $\F_p$-DGA. In the discussion after the proof of Theorem \ref{thm strongere2result}, we show that there is a unique $\F_p$-DGA with homology $\F_p[u]$. Therefore up to quasi-isomorphisms of DGAs, there is a unique $\F_p$-DGA with homology $\Lambda_{\F_p}(x)$. In other words, every $\fp$-DGA with homology  $\Lambda_{\F_p}(x)$ is formal as a DGA.

For the case $\lv x \rv = -1$, we use the bijection between CDVRs with residue field $\F_p$ and DGAs with homology $\Lambda_{\F_p}(x)$ discussed above. If $Z$ is an $\F_p$-DGA, then the corresponding CDVR $\text{End}_{Z\textup{-mod}}(\F_p)$ is an $\F_p$-algebra. It is known that there is a unique CDVR with residue field $\F_p$ and characteristic $p$, this is the formal power series ring  $\F_p[[ u ]]$ \cite[Chapter 2, Theorem 2]{serre1979localfields}. We conclude that up to quasi-isomorphisms of DGAs, there is a unique $\F_p$-DGA with homology $\Lambda_{\F_p}(x)$. 
\end{proof}
\begin{proof}[Proof of Corollary \ref{cor not e2}]
The first statement is a direct consequence of Theorem \ref{thm exterior e2 DGAs}. We prove the second statement.

 As discussed at the end of the proof of Theorem \ref{thm exterior e2 DGAs}, the formal DGA with homology $\Lambda_{\fp}(x_{-1})$ corresponds to the CDVR $\fp[[u]]$ because this formal DGA is an $\fp$-DGA. For a given CDVR $A$ with residue field $\fp$ and characteristic different than $p$, $\text{End}_{A\text{-mod}}(\F_p)$ is the corresponding DGA with homology $\Lambda_{\F_p}(x_{-1})$. Since $\fp[[u]]$ has characteristic $p$, $A \neq \fp[[u]]$. From this, we deduce that $\text{End}_{A\text{-mod}}(\F_p)$ is not formal. Therefore $\text{End}_{A\text{-mod}}(\F_p)$ is not an $E_2$ DGA by Theorem \ref{thm exterior e2 DGAs}.  
\end{proof}
We also improve the main result of \cite{dwyer2013dg} to obtain a classification of the topological equivalence classes of DGAs with homology $\Lambda_{\F_p}(x_{-1})$. In particular, we show that if two DGAs with homology $\Lambda_{\F_p}(x_{-1})$ are not equivalent through quasi-isomorphisms then they are not equivalent through topological equivalences.
\begin{theorem} \label{thm topeqCDVR}
There is a bijection between topological equivalence classes of DGAs with homology $\Lambda_{\F_p}(x_{-1})$ and isomorphism classes of CDVRs with residue field $\fp$.
\end{theorem}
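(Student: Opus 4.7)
By the main theorem of \cite{dwyer2013dg}, isomorphism classes of CDVRs with residue field $\fp$ are in bijection with quasi-isomorphism classes of DGAs with homology $\Lambda_{\fp}(x_{-1})$. Since a quasi-isomorphism induces a topological equivalence, composing with the natural projection yields a surjection from the set of CDVRs onto the set of topological equivalence classes of such DGAs. My plan is to show this surjection is injective by recovering the CDVR as an invariant that depends only on the underlying $\sph$-algebra structure of $HX$, not on its $\hz$-algebra lift.

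For a DGA $X$ with $H_*X = \Lambda_{\fp}(x_{-1})$, the spectrum $HX$ is coconnective with $\pi_0 HX = \fp$. Working in the $\infty$-category of $HX$-modules in $\sph$, the connective cover $\tau_{\geq 0} HX$ is an $HX$-module with underlying spectrum $H\fp$, constructed functorially from $HX$ as an $\sph$-algebra. I will set
\[A(HX) := \pi_0 \, \textup{End}_{HX\textup{-mod}}(\tau_{\geq 0} HX).\]
Any $\sph$-algebra equivalence $HX \simeq HY$ induces an equivalence of module $\infty$-categories that preserves connective covers, so $A(HX) \cong A(HY)$ as rings. The crucial identification is $A(HX) \cong A$, where $A$ is the CDVR attached to $X$ by \cite{dwyer2013dg}; recall that $A = H_0 \, \textup{End}_{X\textup{-mod}}(\fp)$ where $\fp$ is the specific $X$-module arising from the realisation $X \simeq \textup{End}_{A\textup{-mod}}(\fp)$.

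To match these, I will show that the DGI $X$-module $\fp$ is equivalent to $\tau_{\geq 0} HX$ as an $HX$-module. Using the DGI realisation $X = \textup{End}_{A\textup{-mod}}(\fp)$ applied to a cofibrant resolution $P \to \fp$ over $A$, I will define an evaluation map $\textup{ev}: X \to \fp$ of $X$-modules by $f \mapsto [f(\tilde{1})]$, where $\tilde{1} \in P_0$ is a cycle lift of $1 \in \fp$; this map is an isomorphism on $\pi_0$. Pre-composing with the connective-cover map $\tau_{\geq 0} X \to X$ yields an $X$-module map $\tau_{\geq 0} X \to \fp$ that is an isomorphism on $\pi_0$, hence an equivalence since both sides are concentrated in degree $0$. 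It follows that $A(HX) \cong A$, and two topologically equivalent DGAs $X, Y$ satisfy $A \cong A(HX) \cong A(HY) \cong B$ for their respective DGI CDVRs $A$ and $B$, forcing a quasi-isomorphism $X \simeq Y$ by \cite{dwyer2013dg}. The main technical obstacle is the chain-level construction of the evaluation map: one must verify that $\tilde{1}$ can be chosen as a cycle, that $\textup{ev}$ is a chain map (using that $d_P$-images vanish in the quotient $\fp$) and is $X$-linear, and that the resulting composite is a well-defined equivalence in the derived category of $X$-modules. These verifications are routine once a sufficiently cofibrant model of $P$ is fixed.
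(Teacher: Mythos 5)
Your overall strategy matches the paper's exactly: recover the CDVR as $\pi_0$ of the endomorphism ring of the essentially unique $X$-module with homotopy $\F_p$ concentrated in degree $0$, and argue that this quantity is an invariant of the underlying $\sph$-algebra because an $\sph$-algebra equivalence induces an equivalence of module categories preserving that module. The paper phrases this in model-categorical terms via Quillen equivalences of module categories \cite[7.2]{dugger2007topological} and the Quillen-invariance of the homotopy endomorphism ring spectrum \cite[1.4]{dugger2006spectral}, whereas you phrase it $\infty$-categorically; this difference in language is fine.

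There is, however, a genuine technical gap in the way you realize the module. You take the \emph{connective cover} $\tau_{\geq 0}HX$ and assert it is an $HX$-module with a module map $\tau_{\geq 0}HX \to HX$. For a coconnective $\sph$-algebra $HX$ with nontrivial negative homotopy (as here, $\pi_{-1}HX \cong \F_p$), the connective cover of an $HX$-module spectrum is \emph{not} generally an $HX$-module: $HX \wedge \tau_{\geq 0}M$ is not connective, so the action map $HX \wedge M \to M$ does not restrict to $\tau_{\geq 0}M$, and in particular $\tau_{\geq 0}HX \to HX$ is not a map of $HX$-modules. Your subsequent step — pre-composing $\textup{ev}\colon X \to \fp$ with the connective-cover map — therefore does not produce a map of $X$-modules. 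The arrow you actually want points the \emph{other} way: the Postnikov truncation $HX \to H\pi_0 HX \simeq H\F_p$, which \emph{is} a ring map and does endow $H\F_p$ with a canonical $HX$-algebra (hence module) structure. With that module in place, you do not need the chain-level evaluation map at all: DGI show (\cite[Section 2]{dwyer2013dg}, which the paper cites) that $\F_p$ is uniquely determined up to equivalence as an $X$-module by its homology, so the truncation module agrees with the one in the DGI realisation, and the identification $A(HX) \cong A$ follows directly. Fixing the module realisation this way makes your argument a correct $\infty$-categorical rendering of the paper's proof.
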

\begin{proof}
Due to the main result of \cite{dwyer2013dg} described at the beginning of Section \ref{sec coconnective dgas}, we only need to show that topological equivalences and quasi-isomorphisms agree on DGAs with homology $\Lambda_{\fp}(x_{-1})$. Since quasi-isomorphic DGAs are topologically equivalent in general, we only need to show that topologically equivalent DGAs with homology $\Lambda_{\fp}(x_{-1})$ are quasi-isomorphic. In particular, it is sufficient to show that CDVRs corresponding to topologically equivalent DGAs with homology $\Lambda_{\fp}(x_{-1})$ are isomorphic. 

Let $Z$ be a DGA with homology $\Lambda_{\fp}(x_{-1})$. The corresponding CDVR is the degree zero homology 
\[H_0(\text{End}_{Z\textup{-mod}}(\F_p)).\]
By Propositions 1.5 and 1.7 of \cite{dugger2007additiveendomorphism}, the ring spectrum corresponding to $\text{End}_{Z\textup{-mod}}(\F_p)$ is the homotopy endomorphism ring spectrum in the sense of \cite{dugger2006spectral}. The homotopy endomorphism ring spectrum is invariant under Quillen equivalences by \cite[1.4]{dugger2006spectral}. For two topologically equivalent DGAs $Z$ and $Z'$ with homology $\Lambda_{\fp}(x_{-1})$, $Z$-modules is Quillen equivalent to $Z'$-modules \cite[7.2]{dugger2007topological}. Furthermore, it is clear that these Quillen equivalences preserve $\F_p$ which is uniquely determined by its homology \cite[Section 2]{dwyer2013dg}. This shows that the ring spectra corresponding to $\text{End}_{Z\textup{-mod}}(\F_p)$ and $\text{End}_{Z'\textup{-mod}}(\F_p)$ are weakly equivalent. Since the homology ring of a DGA is isomorphic to the homotopy ring of the corresponding ring spectrum, this shows that 
\[H_0(\text{End}_{Z\textup{-mod}}(\F_p))\cong H_0(\text{End}_{Z'\textup{-mod}}(\F_p)).\]
In other words, the CDVRs corresponding to $Z$ and $Z'$ are isomorphic. 
\end{proof}

\section{Postnikov extensions of ring spectra}\label{sec Postnikov}

The proof of the classification result in  Theorem \ref{thm nontrivial} relies on the Postnikov theory for ring spectra. This allows us to break the classification problem into smaller pieces by doing induction over a  Postnikov tower. In this section, we describe Postnikov sections for ring spectra and discuss the classification of Postnikov extensions of ring spectra by Andr{\'e}--Quillen cohomology  \cite{dugger2006postnikov}.

Let $R$ denote a connective q-cofibrant commutative $\Sp$-algebra throughout this section. By  connective, we mean $\pi_iR= 0$ for every $i<0$. Furthermore, let $X$ denote a connective q-cofibrant $R$-algebra.

 What we call an \textbf{$m$ Postnikov section} of $X$ is a map $X \to X[m]$ of $R$-algebras
which satisfies the following properties.

\begin{enumerate}
    \item The homomorphism $\pi_{n}X \to \pi_{n} X[m]$ is an isomorphism for every $n \leq m$.
    \item We have $\pi_nX[m] = 0$ for every $n > m$.
\end{enumerate}

Indeed, there is a Postnikov tower  
\[\cdots \to X[2] \to X[1] \to X[0]\]
with compatible Postnikov section maps $X \to X[m]$ for every $m \geq 0$. Note that we reserve this notation $X[m]$ to denote the target of an $m$ Postnikov section for a given $R$-algebra $X$. Furthermore, taking Postnikov sections can be made functorial. In particular  given $X \to Z$, we have a map $X[m]\to Z[m]$ for every $m\geq0$. 

Using this, we define Postnikov extensions. Let $X$ satisfy $X \simeq X[n-1]$ for some $n \geq 1$. Given a $\pi_0 X$-bimodule $M$, a \textbf{type $(M,n)$ Postnikov extension} of $X$ is an $R$-algebra $Z$ with a map $Z \to X$ satisfying the following properties.
\begin{enumerate}
    \item The induced map $Z[n-1] \to X[n-1] \simeq X$ is a weak equivalence.
    \item The map $Z \to Z[n]$ is a weak equivalence.
    \item There is an isomorphism of $\pi_0X$-bimodules $\pi_nZ \cong M$ where $\pi_nZ$ becomes a $\pi_0X$-bimodule through the isomorphism $\pi_0Z \cong \pi_0X$.
\end{enumerate}
Note that for every connective $R$-algebra $X$, the Postnikov section map $X[n] \to X[n-1]$ is a type $(\pi_nX[n],n)$ Postnikov extension of $X[n-1]$.

Let $\mathcal{M}_R(X+(M,n))$ denote the category whose objects are maps $Z \to X$ that are Postnikov extensions of $X$ of type $(M,n)$ and whose morphisms are weak equivalences over $X$. In other words, given two type $(M,n)$ Postnikov extensions of $X$ denoted by $Z_1 \to X$ and $Z_2 \to X$, a morphism from $Z_1 \to X$ to $Z_2 \to X$ in $\mathcal{M}_R(X+(M,n))$ is weak equivalence $Z_1 \we Z_2$ that makes the following diagram commute. 
 \begin{equation*}
 \begin{tikzcd}
 Z_1 \arrow [rr,"\simeq"]
 \arrow[dr,swap,]
 &
 & 
 Z_2
 \arrow[dl,swap,]
 \\
 &
 X
 \end{tikzcd}
 \end{equation*}
 We denote the weak equivalence classes of  type $(M,n)$ Postnikov extensions of $X$ by $\pi_0 \mathcal{M}_R(X+(M,n))$. 
 
 André--Quillen cohomology is a cohomology theory for ring spectra \cite{lazarev2004cohomology}. This is a generalization of the classical  Andr\'e--Quillen cohomology.
 
To define Andr\'e--Quillen cohomology, we consider $HM$ as an $X[0]$-bimodule via the equivalence $X[0]\simeq H\pi_0X$ and let $X[0] \vee \Sigma^{n+1} HM$ denote the square-zero extension of $X[0]$ by $\Sigma^{n+1}HM$. 

We use the following proposition as our definition of Andr\'e-Quillen cohomology where 
\[\mathrm{Der}_R^{n+1} (X,HM)\] denotes the $n+1$'st André--Quillen cohomology of $X$ with $HM$ coefficients in $R$-algebras. 

 \begin{proposition}\cite[8.6]{dugger2006postnikov} \label{prop defof AQcohomology}
Let $X$ and $M$ be as above. There is a bijection
  \[\mathrm{Der}_R^{n+1} (X,HM) \cong \pi_0(R\textup{-alg}_{/X[0]}(X, X[0] \vee \Sigma^{n+1}HM))\]
where $R\textup{-alg}_{/X[0]}(X, X[0] \vee \Sigma^{n+1} HM)$ denotes the derived mapping space of $R$-algebras over $X[0]$.
\end{proposition}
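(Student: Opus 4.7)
The plan is to establish this bijection as a standard consequence of the adjunction between derivations and square-zero extensions in topological André--Quillen theory, following Basterra--Mandell and Lazarev. I would take as the intrinsic definition of André--Quillen cohomology the formula $\mathrm{Der}_R^{n+1}(X, HM) = \pi_0 \mathrm{Map}_{X\text{-bimod}}(L_{X/R}, \Sigma^{n+1} HM)$, where $L_{X/R}$ is the topological cotangent complex of $X$ over $R$ and $HM$ is viewed as an $X$-bimodule by pullback along the augmentation $X \to X[0] \simeq H\pi_0 X$. The task then reduces to identifying this derived mapping space with derived maps into the square-zero extension.

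The central step is the square-zero extension adjunction: for any $X[0]$-bimodule $N$, pulled back to an $X$-bimodule via the augmentation, there is a natural equivalence between the derived mapping space of $R$-algebras over $X[0]$ into $X[0] \vee N$ and the space of derivations from $X$ into $N$. Concretely, a map $\phi \co X \to X[0] \vee N$ over $X[0]$ has its first component pinned down as the augmentation, and its second component is forced by the square-zero multiplication to satisfy a topological Leibniz rule; this is precisely the defining property of a derivation. Composing this adjunction with the universal property of the cotangent complex (which corepresents derivations in $X$-bimodules) and taking $N = \Sigma^{n+1} HM$ yields the desired bijection on $\pi_0$.

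The main obstacle I expect is ensuring homotopical control of the square-zero extension construction. One must verify that $N \mapsto X[0] \vee N$ is homotopy invariant and is right Quillen with respect to a model structure on $R$-algebras over $X[0]$ whose left adjoint computes the indecomposables (and hence the cotangent complex), typically modelled by a bar construction. In the q-cofibrant EKMM setting of the paper this also requires choosing a cofibrant replacement of $X$ compatible with the Postnikov augmentation $X \to X[0]$, so that the derived mapping space $R\textup{-alg}_{/X[0]}(X, X[0] \vee \Sigma^{n+1}HM)$ is well-defined and agrees with the intrinsic $\mathrm{Map}_{X\text{-bimod}}(L_{X/R}, \Sigma^{n+1} HM)$. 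Once these model-categorical coherences are in place, as carried out in detail in Dugger--Shipley, passing to $\pi_0$ on both sides of the adjunction delivers the stated bijection.
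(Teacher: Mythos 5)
The paper does not actually prove this proposition: as the sentence preceding it makes explicit, the formula on the right-hand side is \emph{taken as the definition} of $\mathrm{Der}^{n+1}_R(X,HM)$, and the citation to Dugger--Shipley \cite[8.6]{dugger2006postnikov} is there to record that this agrees with their convention (which in turn follows Lazarev). So there is no in-paper argument to compare yours against.

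That said, your argument is a correct reconstruction of the background result that reconciles this mapping-space formula with the ``intrinsic'' André--Quillen theory. The route you take --- corepresent derivations by a cotangent object $L_{X/R}$, identify $R$-algebra maps over $X[0]$ into the square-zero extension $X[0]\vee N$ with derivations via the (derived) square-zero/indecomposables adjunction, then specialize $N = \Sigma^{n+1}HM$ and pass to $\pi_0$ --- is the standard one, and you are right that the real work is the model-categorical bookkeeping: $N \mapsto X[0]\vee N$ must be right Quillen into $R$-algebras over $X[0]$, and $X$ must be replaced compatibly with the augmentation so that the slice mapping space is genuinely derived. Two small caveats worth flagging. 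First, since this is the \emph{associative} setting, ``Leibniz rule'' is shorthand: the precise statement is the adjunction between the augmentation-ideal (or bimodule indecomposables) functor and square-zero extension, as in Lazarev; the heuristic elementwise Leibniz rule is not literally available. Second, if one adopts the paper's stance that the mapping-space formula \emph{is} the definition, then your proposal is not proving the stated proposition but rather showing its consistency with the cotangent-complex picture --- a useful sanity check, but logically a different task. Within the framing you chose, the outline is sound.
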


  In \cite{dugger2006postnikov}, Dugger and Shipley show that the weak equivalence classes of type $(M,n)$ Postnikov extensions of $X$ are classified by the following Andr\'e--Quillen cohomology groups.

 \begin{theorem} \label{thm pstnk dugger shipley}
\textup{\cite{dugger2006postnikov}}There is a bijection
\begin{equation*}
\pi_0 \mathcal{M}_R(X+(M,n)) \cong \mathrm{Der}_R^{n+1} (X,HM) / \mathrm{Aut}(M)
\end{equation*}
 where $\text{Aut}(M)$ denotes the $\pi_0X$-bimodule automorphisms  of $M$.
\end{theorem}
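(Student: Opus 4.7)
The plan is to realize each weak equivalence class of type $(M,n)$ Postnikov extensions as a homotopy pullback whose classifying map is a derivation, and to identify that derivation with an element of $\mathrm{Der}_R^{n+1}(X,HM)$ via Proposition \ref{prop defof AQcohomology}. More precisely, for a Postnikov extension $Z \to X$, the map $Z \to Z[0] \simeq X[0]$ together with $\pi_n Z \cong M$ should fit $Z$ into a homotopy pullback square of $R$-algebras
\begin{equation*}
\begin{tikzcd}
Z \ar[r] \ar[d] & X[0] \ar[d,"\eta_0"] \\
X[0] \ar[r,"\kappa"'] & X[0] \vee \Sigma^{n+1} HM,
\end{tikzcd}
\end{equation*}
where $\eta_0$ is the inclusion of the trivial square-zero derivation and $\kappa$ is a map of $R$-algebras over $X[0]$. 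The class $[\kappa] \in \pi_0(R\textup{-alg}_{/X[0]}(X[0], X[0]\vee \Sigma^{n+1}HM))$ will be the $k$-invariant. Since $X \simeq X[n-1]$ and the connectivity of $\Sigma^{n+1}HM$ makes $\kappa$ factor (up to homotopy) through $X$, one promotes this to a class in $\pi_0(R\textup{-alg}_{/X[0]}(X, X[0]\vee\Sigma^{n+1}HM))$, which by Proposition \ref{prop defof AQcohomology} is exactly $\mathrm{Der}_R^{n+1}(X,HM)$.

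First I would set up the construction in both directions. Given $Z \to X$, build the pullback square above using the connectivity of $\Sigma^{n+1}HM$ to verify that the pullback really does have homotopy groups matching those of $Z$ in the relevant range — i.e.\ that $\pi_i$ of the pullback agrees with $\pi_i X$ for $i\leq n-1$, that $\pi_n$ picks up $M$, and that $\pi_i = 0$ for $i > n$. Conversely, given a derivation $\kappa$, define $Z_\kappa$ to be the homotopy pullback; the long exact sequence of homotopy groups, together with the square-zero structure, shows that $Z_\kappa \to X[0]$ is a type $(M,n)$ Postnikov extension of $X$.

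Next, I would check that these operations are mutually inverse on the level of weak equivalence classes, up to the action of $\mathrm{Aut}(M)$. The point is that $\mathrm{Aut}(M)$ acts on $X[0]\vee\Sigma^{n+1}HM$ through its action on $M$, and this action permutes derivations giving weakly equivalent pullbacks: an isomorphism $Z_1 \simeq Z_2$ over $X$ induces an automorphism of $M = \pi_n Z_i$, and conversely any bimodule automorphism of $M$ can be realised by a self-equivalence of the pullback. The hard part will be the careful bookkeeping of this last step: showing that two derivations $\kappa_1,\kappa_2$ produce equivalent Postnikov extensions if and only if they differ by the $\mathrm{Aut}(M)$-action, and no coarser identification is needed. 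This requires using the universal property of the square-zero extension $X[0]\vee\Sigma^{n+1}HM$ as a classifying object for derivations, and arguing that any weak equivalence $Z_{\kappa_1} \we Z_{\kappa_2}$ over $X$ lifts to a commuting map between the two pullback diagrams, whose restriction to $X[0]\vee\Sigma^{n+1}HM$ is determined by its effect on $\pi_n$, i.e.\ by an element of $\mathrm{Aut}(M)$.

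Finally, I would package the correspondence as a bijection, invoking Proposition \ref{prop defof AQcohomology} to rewrite the set of classifying derivations as $\mathrm{Der}_R^{n+1}(X,HM)$ and dividing out by $\mathrm{Aut}(M)$. The main technical obstacle I expect is producing the classifying square in sufficient naturality to make the $\mathrm{Aut}(M)$-quotient transparent; the existence and uniqueness statements are standard manipulations with homotopy pullbacks of connective $R$-algebras, but the precise identification of the indeterminacy is where one has to be careful, and it is exactly what forces the quotient by $\mathrm{Aut}(M)$ rather than a more refined equivalence relation.
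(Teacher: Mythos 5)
The paper does not actually prove this theorem: it is cited from Dugger--Shipley \cite{dugger2006postnikov}, and the only added content in the paper is the remark immediately afterward that the classification transfers from symmetric spectra to EKMM spectra via the Quillen equivalence of \cite{Mandell01Model}. So there is no proof in the paper against which to compare your argument; what you have written is a sketch of the Dugger--Shipley proof itself, and it should be assessed on those terms.

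Your outline captures the right overall mechanism --- $k$-invariants valued in square-zero extensions, homotopy pullback squares, and an $\mathrm{Aut}(M)$ ambiguity --- but your classifying square is set up incorrectly. You place $X[0]$ in both the upper-right and lower-left corners and let $\kappa \co X[0] \to X[0] \vee \Sigma^{n+1}HM$ be the bottom edge, then try to ``promote'' $[\kappa]$ from $\pi_0(R\textup{-alg}_{/X[0]}(X[0],\,X[0]\vee\Sigma^{n+1}HM))$ to $\pi_0(R\textup{-alg}_{/X[0]}(X,\,X[0]\vee\Sigma^{n+1}HM))$. This does not work. First, the homotopy pullback of $X[0] \to X[0]\vee\Sigma^{n+1}HM \leftarrow X[0]$ is (up to equivalence) $X[0] \vee \Sigma^n HM$, which agrees with $Z$ only when $X \simeq X[0]$, i.e.\ when $n=1$; for general $n$ the square as drawn cannot recover $\pi_i Z$ for $0<i<n$. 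Second, the only natural map between $X$ and $X[0]$ goes $X \to X[0]$, so ``$\kappa$ factors through $X$'' is backwards; restriction along $X \to X[0]$ gives a map $\mathrm{Der}_R^{n+1}(X[0],HM) \to \mathrm{Der}_R^{n+1}(X,HM)$ which is generally neither surjective nor injective, so it cannot serve as the classification. The correct square (as the paper itself records right after Theorem \ref{thm pstnk dugger shipley}) has $X$ in the lower-left and $k \co X \to X[0]\vee\Sigma^{n+1}HM$ as the bottom edge --- the derivation is a map out of $X$ from the start, and no promotion step exists.

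Beyond that correction, your argument is a heuristic rather than a proof: the ``careful bookkeeping'' you defer --- showing that two derivations give equivalent extensions \emph{exactly} when they differ by the $\mathrm{Aut}(M)$-action, and that the indeterminacy is no finer --- is precisely the technical heart of Dugger--Shipley, which they handle via a moduli-space comparison (a weak equivalence of classification spaces, not merely a bijection of $\pi_0$ sets constructed by hand). If you intend to reproduce the argument rather than cite it, you would need to introduce that machinery explicitly.
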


Note that in \cite{dugger2006postnikov}, this result is proved in the setting of symmetric spectra. However, there is a Quillen equivalence between $R$-algebras of EKMM spectra and $R$-algebras of symmetric spectra (for the corresponding $R$) \cite{Mandell01Model}. Therefore this result is also valid for EKMM spectra. Furthermore, we note that a similar classification result for   commutative $R$-algebras is given in \cite{basterra1999andre} and for $E_n$ $R$-algebras is given in \cite{basterra2013BP}. 

Given $k \in \mathrm{Der}_R^{n+1} (X,HM)$, $k$ is represented by a derived map $k \co X \to X[0] \vee \Sigma^{n+1}HM$ over $X[0]$. If $X \simeq X[n-1]$, one obtains a type $(M,n)$ Postnikov extension $Z\to X$ given by the following homotopy pullback square in $R$-algebras over $X[0]$.
\begin{equation*}
 \begin{tikzcd}
 Z \arrow [r]
 \arrow[d]
 & X[0]
 \arrow[d,"i"]
 \\
 X \arrow[r,"k"]
 &
 X[0] \vee \Sigma^{n+1}HM
 \end{tikzcd}
 \end{equation*}
 Note that $i$ represents the trivial derivation given by the canonical map. Furthermore by Theorem \ref{thm pstnk dugger shipley} above, every Postnikov extension is obtained from a derivation through a pullback square as above. The derivation corresponding to a Postnikov extension is called the \textbf{$k$-invariant} of the Postnikov extension. Theorem \ref{thm pstnk dugger shipley} also shows that if two derivations differ by an automorphism of $M$, then they result in weakly equivalent Postnikov extensions.

\section{Postnikov extensions and Hochschild homology} \label{sec pstnkv and hochschild}
In the previous section, we explained the way André--Quillen cohomology groups classify Postnikov extensions. Here, we show that the André--Quillen cohomology groups that we are interested in can be calculated using topological Hochschild homology. Later, we discuss the particular form of the B\"okstedt  spectral sequence we use for our calculations and provide standard calculational results. 

    Let $R$ be a q-cofibrant connective commutative $\Sp$-algebra throughout this section.  We use the notation $\wedge_R^L$ to denote the derived smash product. If the smash product is already giving the derived smash product, we often omit the superscript $L$. As usual, if the given smash product do not necessarily represent the derived smash product, we warn the reader.
\subsection{André--Quillen cohomology to Hochschild homology}
Let $X$ denote a connective q-cofibrant  $R$-algebra and let $M$ be a $\pi_0X$-bimodule. By letting  $X[0] = H\pi_0X$, $HM$ becomes an  $X[0]$-bimodule in a canonical way. Furthermore, $HM$ becomes an $X$-bimodule by forgetting through the $R$-algebra map $X\to X[0]$.

First, we define topological Hochschild cohomology as in \cite[\rom{9}]{elmendorf2007rings}.
\begin{definition} \label{def aq coh}
 Let $X$ and $HM$ be as above, then the degree $n$ topological Hochschild cohomology group of $X$ in $R$-algebras with coefficients in $HM$ is given by 
 \[\textup{THH}_{R}^{n}(X,HM) = \pi_{-n} F_{X^e}(X,HM)\]
 where $F_{X^e}(-,-)$ denotes the derived mapping spectrum in $X^e$-modules and $X^e$ denotes the derived smash product $X \wedge_R X^{op}$. Note that $X^{op}$ denotes the $R$-algebra whose underlying $R$-module is $X$ and whose multiplication is given by switching the order of multiplication on $X$.
\end{definition}
 The following result is proved in Section 8.5 of \cite{dugger2007topological}.
 \begin{theorem} \label{thm aq coh to thhcoh}
Let $X$ be a connective $R$-algebra and $M$ be a $\pi_0X$-bimodule. There is an isomorphism 
\[\textup{Der}^n_R(X,HM) \cong \textup{THH}_{R}^{n+1}(X,HM) \]
that respects the action of $\text{Aut}(M)$ for every $n>0$. Here, $HM$ is considered as an $X$-bimodule as described above. 
\end{theorem}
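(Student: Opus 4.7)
My plan is to deduce the isomorphism from the defining cofiber sequence for the non-commutative cotangent complex of $X$, together with the identification of the derivation space provided by Proposition \ref{prop defof AQcohomology}.

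The first step is to establish the identification
\[\textup{Der}^n_R(X,HM) \cong \pi_{-n}\,F_{X^e}(\Omega_{X/R}, HM),\]
where $\Omega_{X/R}$ is the $X^e$-module fitting into the cofiber sequence
\[\Omega_{X/R} \to X \wedge_R X \xrightarrow{\mu} X\]
of $X^e$-modules, with $\mu$ the multiplication of $X$. By Proposition \ref{prop defof AQcohomology} the left-hand side is $\pi_0$ of the derived mapping space of $R$-algebras over $X[0]$ from $X$ into the square-zero extension $X[0]\vee \Sigma^n HM$. Such maps are precisely derivations of $X$ with values in $\Sigma^n HM$ regarded as an $X$-bimodule by restriction along the augmentation $X \to X[0]$, and these are classified by $X^e$-linear maps out of the cotangent complex $\Omega_{X/R}$ by its universal property. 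This step is the key geometric input and is spelled out in detail in Section 8.5 of \cite{dugger2007topological}.

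Granting this identification, I would apply $F_{X^e}(-,HM)$ to the displayed cofiber sequence to obtain a fiber sequence of spectra
\[F_{X^e}(X, HM) \to F_{X^e}(X \wedge_R X, HM) \to F_{X^e}(\Omega_{X/R}, HM).\]
Because $X \wedge_R X \simeq X^e$ is free as an $X^e$-module, the middle term is weakly equivalent to $HM$. Passing to the long exact sequence on homotopy groups and using that $\pi_i HM = 0$ for $i \neq 0$, one finds that for every $n > 0$ the terms flanking the boundary map vanish, yielding an isomorphism
\[\pi_{-n} F_{X^e}(\Omega_{X/R}, HM) \xrightarrow{\cong} \pi_{-n-1} F_{X^e}(X, HM) = \textup{THH}^{n+1}_R(X, HM).\]
Composing with the identification of the first paragraph delivers the desired bijection. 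The $\textup{Aut}(M)$-equivariance is automatic because every step is functorial in the coefficient bimodule $M$.

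I expect the main obstacle to be the identification in the first paragraph rather than the long exact sequence computation in the second, which is essentially formal. A careful treatment of the derivation space in the spectral setting typically requires a bar-type explicit model for $\Omega_{X/R}$ together with a direct comparison of mapping spaces; since Dugger and Shipley provide this in the reference already cited, I would appeal to their construction and concentrate my own effort on verifying that the composite isomorphism respects the $\textup{Aut}(M)$-action.
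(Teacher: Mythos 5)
The paper itself does not supply a proof of this theorem: it simply appeals to Section~8.5 of Dugger--Shipley \cite{dugger2007topological}, and your sketch is a faithful reconstruction of that argument. Your use of the cofiber sequence $\Omega_{X/R} \to X \wedge_R X \to X$ of $X^e$-modules, the identification of derivations (in the sense of Proposition~\ref{prop defof AQcohomology}) with $X^e$-linear maps out of $\Omega_{X/R}$, and the long exact sequence in $\pi_*$ applied to $F_{X^e}(-,HM)$ together with the vanishing of $\pi_{-k}HM$ for $k>0$ is precisely the mechanism behind the cited result, and your degree bookkeeping giving $\textup{Der}^n_R(X,HM)\cong\textup{THH}^{n+1}_R(X,HM)$ for $n>0$ is correct.
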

\begin{remark} \label{rmk functorial THHcohomology}
We use the theorem above in the case  $HM=X[0]$ and where we have another $R$-algebra $Z$ with a map $Z \to X$ that induces an isomorphism $\pi_0Z \to \pi_0X$. We define $Z[0]$ to be $X[0]$ together with the composite map $Z \to X \to X[0]$. This makes the map $Z\to X$ a map of $R$-algebras over $X[0]$.

Since we use   Proposition \ref{prop defof AQcohomology} as our definition of André--Quillen cohomology, we obtain a map 
$\textup{Der}^n_R(X,X[0]) \to \textup{Der}^n_R(Z,X[0])$ for every $n$. Furthermore, we have
\[F_{X^e}(X,X[0]) \to F_{Z^e}(X,X[0]) \to F_{Z^e}(Z,X[0]))\]
where the first map comes from the forgetful functor and the second map is the map induced by the map $Z \to X$. This gives us a map 
\[\textup{THH}_{R}^{n+1}(X,X[0]) \to \textup{THH}_{R}^{n+1}(Z,X[0]).\]

In \cite{francis2013thetangentcomplex}, Francis defines an $E_n$ André--Quillen cohomology theory. For $n=1$, this is the André--Quillen cohomology we consider here, see \cite[2.27]{francis2013thetangentcomplex}. Furthermore, Francis shows that the correspondence between André--Quillen cohomology and topological Hochschild cohomology we give in Theorem \ref{thm aq coh to thhcoh} is natural in the setting we describe here. 
\end{remark}

The following is the definition of topological Hochschild homology.
\begin{definition}
 Let $X$ be a q-cofibrant (commutative) $R$-algebra and let $HM$ be as in Definition \ref{def aq coh}. Degree $n$ topological Hochschild homology group of $X$ with $HM$ coefficients in $R$-algebras is defined by the following: 
 \[\text{THH}_{n}^R(X,HM) =\pi_n( X \wedge_{X^e}^L HM),\]
 where $\wedge_{X^e}^L$  denotes the derived smash product. As before, $X^e = X \wedge_{R} X^{op}$ and this represents the derived smash product because $X$ is q-cofibrant. 
\end{definition}

Let  $X$ be a q-cofibrant (commutative) $R$-algebra with an $R$-algebra map $X \to H\F_p$. Furthermore, we assume that $\hfp$ is q-cofibrant as a commutative $R$-algebra. 
An alternative definition of topological Hochschild homology is given in \cite[\rom{9}.2.1]{elmendorf2007rings} using simplicial resolutions. 

To obtain this simplicial resolution, one starts with the bar construction of $X$. This is the simplicial $X^e$-module given by $B_n^R(X) = X^{\wedge_Rn+2}$ in simplicial degree $n$. Note that the $X \wedge X^{op}$-module structure on $B_n^R(X)$ is given by the first and the last $X$ factors in the canonical way. The degeneracy maps are induced by the unit map of $X$ and the face maps are induced by the multiplication map of $X$, see \cite[\rom{4}.7.2]{elmendorf2007rings}. The geometric realization of this simplicial $R$-module is $X$. Furthermore, $B_{\bullet}^R(X)$ is proper by \cite[\rom{7}.6.8]{elmendorf2007rings}. 

We obtain a new simplicial $R$-module $CB^R_\bullet(X,H\F_p)$ by 
\begin{equation}\label{eq definition of the cyclic bar construction}
CB^R_\bullet(X,H\F_p) = B_\bullet^R(X) \wedge_{X^e} H\F_p
\end{equation}
Note that we have $CB^R_m(X,H\F_p) = X^{\wedge_R m} \wedge_R H\F_p$. A description of the face and degeneracy maps of $CB^R_\bullet(X,H\F_p)$ is given in \cite[\rom{9}.2.1]{elmendorf2007rings}. Since $H\F_p$ is q-cofibrant as a commutative $R$-algebra, $CB^R_\bullet(X,H\F_p)$ is also proper by \cite[\rom{7}.6.8]{elmendorf2007rings}.  Since the geometric realization commutes with smash products, we obtain 
\[\lv CB^R_\bullet(X,H\F_p) \rv \simeq X \wedge_{X^e}^L H\F_p\]
where $\lv - \rv$ denotes the geometric realization. Note that  we did not need to replace $H\F_p$ by a cell $X^e$-module because $CB^R_\bullet(X,H\F_p)$ is proper and the smash products in each degree are derived by \cite[\rom{7}.6.7]{elmendorf2007rings}. We obtain that 
\[\pi_*(\lv CB^R_\bullet(X,H\F_p) \rv) \cong \text{THH}_{*}^R(X,H\F_p).\]

Let $f\co X_1\to X_2$ be a map of  $R$-algebras over $H\F_p$ where each $X_i$ is q-cofibrant as a commutative or associative $R$-algebra.  The corresponding map 
\[\thh^R_*(X_1,\hfp) \to \thh^R_*(X_2,\hfp) \]
is described as follows. We start with the map
$B_\bullet^R(X_1) \to B_\bullet^R(X_2)$ of $X_1^e$-modules given by $f^{\wdg_R n+2}$ in simplicial degree $n$. This induces the following map. 
\begin{equation*}
    \begin{split}
CB_\bullet^R(X_1,H\F_p) = B_\bullet^R(X_1) \wedge_{X_1^e} H\F_p &\to B_\bullet^R(X_2) \wedge_{X_1^e} H\F_p \\
&\to B_\bullet^R(X_2) \wedge_{X_2^e} H\F_p = CB_\bullet^R(X_2,H\F_p)   
    \end{split}
\end{equation*}
Note that the middle term above do not necessarily contain derived smash products in its simplicial degrees.
The composite map above is given by 
\[f^{\wedge_R m} \wedge_R H\F_p \co CB^R_m(X_1,H\F_p) \cong X_1^{\wedge_R m} \wedge_R H\F_p \to CB^R_m(X_2,H\F_p) \cong X_2^{\wedge_R m} \wedge_R H\F_p.\]
Taking geometric realizations on this composite map and using the fact that geometric realizations commute with smash products, we obtain another composite: 
\[X_1 \wdg_{X_1^e}^L \hfp \to X_2 \wdg_{X_1^e} \hfp \to X_2 \wdg_{X_2^e}^L \hfp,\]
providing the desired map on topological Hochschild homology. As before, the middle term above may not represent the derived smash product.

Together with Theorems \ref{thm pstnk dugger shipley} and \ref{thm aq coh to thhcoh}, the following proposition allows us to study Postnikov extensions of $R$-algebras using topological Hochschild homology. 
\begin{proposition} \label{prop thh coh to thh ho}
Let $X$ be a q-cofibrant $R$-algebra and let $X\to \hfp$ be a map of $R$-algebras. We consider $\hfp$ as an $X$-bimodule by using this map.  There is an isomorphism 
\[\textup{THH}_{R}^{n+1}(X,\hfp) \cong \textup{Hom}_{\fp}(\textup{THH}^{R}_{n+1}(X,\hfp),\fp)\]
that is natural with respect to maps of $R$-algebras over $\hfp$.
\end{proposition}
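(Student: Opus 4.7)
The plan is to combine a change-of-rings adjunction with the universal coefficient isomorphism available for modules over a field Eilenberg--MacLane spectrum. The key observation is that the multiplication map of $\hfp$ combined with the given $R$-algebra map $X \to \hfp$ produces a map of $R$-algebras
\[
X^e = X \wedge_R X^{op} \longrightarrow \hfp \wedge_R \hfp \longrightarrow \hfp,
\]
which realizes $\hfp$ as an $X^e$-algebra, and the $X$-bimodule structure on $\hfp$ used in the statement is precisely the one obtained by restriction along this map.

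First I would invoke the restriction--extension adjunction along this map of ring spectra: for any $X^e$-module $M$ and any $\hfp$-module $N$, there is a natural equivalence
\[
F_{X^e}(M, N) \simeq F_{\hfp}\bigl(M \wedge_{X^e}^L \hfp,\, N\bigr).
\]
Taking $M = X$ (which is q-cofibrant as an $X^e$-module because $X$ is q-cofibrant as an $R$-algebra) and $N = \hfp$ yields
\[
F_{X^e}(X, \hfp) \simeq F_{\hfp}\bigl(X \wedge_{X^e}^L \hfp,\, \hfp\bigr).
\]
Applying $\pi_{-(n+1)}$ to the left side gives $\textup{THH}^{n+1}_R(X,\hfp)$ by Definition \ref{def aq coh}.

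Next, because $\F_p$ is a field, any $\hfp$-module splits as a wedge of suspensions of $\hfp$, so the universal coefficient sequence for $\hfp$-module mapping spectra collapses: for any $\hfp$-module $W$ there is a natural isomorphism
\[
\pi_{-k} F_{\hfp}(W, \hfp) \;\cong\; \Hom_{\F_p}(\pi_k W,\F_p).
\]
Applied to $W = X \wedge_{X^e}^L \hfp$ with $k = n+1$, the right-hand side is exactly $\Hom_{\F_p}(\textup{THH}^R_{n+1}(X,\hfp),\F_p)$, giving the claimed isomorphism.

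For naturality, given a map $X_1 \to X_2$ of $R$-algebras over $\hfp$, both the change-of-rings adjunction and the splitting of $\hfp$-modules are natural in $\hfp$-module maps, so the contravariant action on $\textup{THH}^{n+1}_R(-,\hfp)$ described in Remark \ref{rmk functorial THHcohomology} corresponds under the isomorphism to the $\F_p$-dual of the covariant map on $\textup{THH}^R_{n+1}(-,\hfp)$ constructed from the simplicial model in the discussion preceding the proposition. The one technical point that deserves care is verifying that the restriction--extension adjunction is homotopically well-behaved, i.e., that $X \wedge_{X^e}^L \hfp$ already represents the derived smash product and that no cofibrancy replacement of $\hfp$ as an $X^e$-module is needed to make the adjunction equivalence hold; this is where the q-cofibrancy hypothesis on $X$ (and hence on $X^e$) does the work.
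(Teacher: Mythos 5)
Your proposal follows essentially the same route as the paper: factor the $X^e$-module structure on $\hfp$ through the ring map $X^e \to \hfp \wedge_R \hfp \to \hfp$, apply the restriction--extension adjunction to obtain $F_{X^e}(X,\hfp) \simeq F_{\hfp}(X \wedge_{X^e}^L \hfp, \hfp)$, and then observe that $\hfp$-module mapping spectra have homotopy given by $\Hom_{\fp}$-duals because $\F_p$ is a field (the paper phrases this via collapse of the Ext spectral sequence of \cite[IV.4.1]{elmendorf2007rings}; your formulation via splitting of $\hfp$-modules into wedges of suspensions is equivalent). One small inaccuracy: your parenthetical claim that $X$ is q-cofibrant as an $X^e$-module ``because $X$ is q-cofibrant as an $R$-algebra'' is not correct in general (a cofibrant monoid need not be a cofibrant bimodule over itself); however, this is harmless here because $\textup{THH}^{n+1}_R(X,\hfp)$ is defined with the \emph{derived} function spectrum $F_{X^e}(X,\hfp)$, and the adjunction equivalence is invoked at the level of derived functors, so no bimodule cofibrancy of $X$ is actually needed. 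What the q-cofibrancy of $X$ as an $R$-algebra buys is that $X^e = X \wedge_R X^{op}$ already computes the derived smash product, ensuring the change-of-rings map $X^e \to \hfp$ and the resulting Quillen adjunction are the homotopically correct ones.
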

\begin{proof}
Since $\hfp$ is a commutative $R$-algebra, $\hfp^{op}= \hfp$ and its multiplication map $\hfp \wedge_R \hfp\to \hfp$ is an $R$-algebra map. Therefore we have the following composite $R$-algebra map
\[X^e \cong X \wedge_R X^{op} \to \hfp \wedge_R \hfp \to \hfp.\]

This induces a Quillen adjunction between $X^e$-modules and $\hfp$-modules where the left adjoint is 
\[- \wedge_{X^e} \hfp \co X^e\text{-modules} \to \hfp\text{-modules}\]
and the right adjoint is the forgetful functor. At the level of derived function spectra, we obtain the following derived adjunction.
\begin{equation}\label{eq an adjunction equality of mapping spectra}
    F_{X^e} (X,\hfp) \simeq F_{\hfp}(X \wedge_{X^e}^L \hfp, \hfp)
\end{equation} 

We identify the homotopy groups of the right hand using the Ext spectral sequence in \cite[\rom{4}.4.1]{elmendorf2007rings}. For an $\sph$-algebra $S$ and $S$-modules $K$ and $L$, this spectral sequence is given by 
\begin{equation}\label{thm ext spectral sequence}
E_2^{s,t} = \textup{Ext}_{S^*}^{s,t}(K^*,L^*) \Longrightarrow \pi_{-(s+t)}(F_S(K ,L))
\end{equation}
where $F_S(-,-)$ denotes the derived function spectrum in $S$-modules and $N^*$ denotes $\pi_{-*}(N)$ for a given spectrum $N$. 

Note that we are in the case $S=\hfp$. Since $\fp$ is a field, the homotopy groups of the mapping spectrum in $\hfp$-modules is given by the graded module of homomorphisms between the homotopy groups of the given $\hfp$-modules. Therefore, taking homotopy  of the  equivalence in \eqref{eq an adjunction equality of mapping spectra} gives the desired result. 

The naturality of the equivalence in the proposition follows by the naturality of adjoint functors considered in \eqref{eq an adjunction equality of mapping spectra}. 
\end{proof}

\subsection{Calculational tools for Hochschild homology}
We proceed with elementary results for topological Hochschild homology computations and a discussion on the B\"okstedt spectral sequence.

\begin{remark} \label{rmk cofsmash}
Let $R$ be a commutative $\Sp$-algebra with a map $R \to H\F_p$ of commutative $\Sp$-algebras. Given a q-cofibrant (commutative) $R$-algebra $X$, $H\F_p \wedge_{R} X$ is a q-cofibrant (commutative) $H\F_p$-algebra. This follows by the fact that $H\F_p \wedge_{R} -$ is a left Quillen functor from (commutative) $R$-algebras to (commutative) $H\F_p$-algebras. 

\end{remark}

\begin{proposition} \label{prop thhchangeofbase}
Let $H\F_p$ be  q-cofibrant as a commutative $R$-algebra and let $X$ be a q-cofibrant (commutative) $R$-algebra over $H\F_p$. There is an equivalence
\[\textup{THH}^R_n(X,H\F_p)\cong \textup{THH}^{H\F_p}_n(H\F_p \wedge_R X,H\F_p).\]
This equivalence is natural with respect to maps of (commutative) $R$-algebras over $H\F_p$. Note that under our q-cofibrancy assumptions, the smash product above is the derived smash product.
\end{proposition}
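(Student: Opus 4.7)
The plan is to prove the equivalence level-wise on the simplicial models discussed after equation \eqref{eq definition of the cyclic bar construction}, and then take geometric realizations. Both sides admit descriptions as geometric realizations of proper simplicial spectra whose $m$-th levels are explicit iterated smash products, so it suffices to produce a simplicial map that is a level-wise isomorphism and compatible with face and degeneracy maps.

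First I would set up both simplicial objects. On the one hand, $CB^R_\bullet(X,\hfp)$ with $CB^R_m(X,\hfp)\cong X^{\wedge_R m}\wedge_R \hfp$; on the other hand, writing $Y=\hfp\wedge_R X$, the simplicial $\hfp$-module $CB^{\hfp}_\bullet(Y,\hfp)$ with $CB^{\hfp}_m(Y,\hfp)\cong Y^{\wedge_{\hfp} m}\wedge_{\hfp}\hfp$. Next, the key computation is the natural isomorphism
\[
Y^{\wedge_{\hfp} m}\wedge_{\hfp}\hfp \;=\; (\hfp\wedge_R X)^{\wedge_{\hfp} m}\wedge_{\hfp}\hfp \;\cong\; \hfp\wedge_R X^{\wedge_R m},
\]
obtained by repeatedly collapsing adjacent tensor factors via $A\wedge_{\hfp}(\hfp\wedge_R B)\cong A\wedge_R B$, together with the symmetry identification $\hfp\wedge_R X^{\wedge_R m}\cong X^{\wedge_R m}\wedge_R \hfp$ that reflects the cyclic arrangement of the bar construction. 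These smash products are all derived because $X$, $\hfp$ and $\hfp\wedge_R X$ are q-cofibrant (Remark \ref{rmk cofsmash}), so the level-wise equivalences assemble correctly.

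Then I would verify compatibility with the simplicial structure. The face maps on $CB^R_\bullet(X,\hfp)$ use the multiplication $X\wedge_R X\to X$ and the unital $\hfp$-algebra structure on $\hfp$ via the map $X\to\hfp$; the face maps on $CB^{\hfp}_\bullet(Y,\hfp)$ use the multiplication on $Y=\hfp\wedge_R X$ and the $\hfp$-algebra structure on $\hfp$. Since the multiplication on $\hfp\wedge_R X$ is induced from the multiplications on $\hfp$ and $X$ and the map $X\to\hfp$ factors through the unit $R\to\hfp$ in the relevant way, a straightforward diagram chase shows that the above level-wise isomorphism intertwines the face and degeneracy maps. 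Taking geometric realization (which commutes with smash products by \cite[\rom{7}]{elmendorf2007rings}) yields
\[
X\wedge_{X^e}^L\hfp \;\simeq\; Y\wedge_{Y^e}^L\hfp,
\]
and passing to $\pi_n$ gives the desired isomorphism $\textup{THH}^R_n(X,\hfp)\cong \textup{THH}^{\hfp}_n(\hfp\wedge_R X,\hfp)$.

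Finally, for naturality, let $f\colon X_1\to X_2$ be a map of (commutative) $R$-algebras over $\hfp$. The induced maps on both simplicial objects are given level-wise by smash powers of $f$ (after applying $\hfp\wedge_R -$ on the right-hand side), and the chosen isomorphism at each simplicial level is manifestly natural in $f$; therefore the induced maps on THH agree. The main obstacle I anticipate is not the calculation itself but careful bookkeeping of the cyclic structure — making sure the collapse identification $A\wedge_{\hfp}(\hfp\wedge_R B)\cong A\wedge_R B$ respects the bimodule actions arising from the cyclic arrangement, so that the simplicial identities on both sides correspond under the isomorphism. This is routine but tedious, and is the technical heart of the argument.
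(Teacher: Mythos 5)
Your proof is correct and follows essentially the same route as the paper: identify the simplicial objects level-wise via the base-change isomorphism $(\hfp\wedge_R X)^{\wedge_{\hfp}m}\cong \hfp\wedge_R X^{\wedge_R m}$, check compatibility with face/degeneracy maps, invoke properness and q-cofibrancy so that the smash products are derived, take geometric realizations, and observe that the isomorphism is natural because it is built from natural isomorphisms of smash products. The paper's proof is terser but uses the identical key step.
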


\begin{proof}
There is a natural isomorphism 
\begin{equation}\label{eq hfp smashsplit}
(H\F_p \wedge_R X^{\wedge_R m})\cong (H\F_p \wedge_R X)^{\wedge_{H\F_p} m}.
\end{equation}
Using this, the following shows that the simplicial $R$-modules calculating  $\textup{THH}^R_*(X,H\F_p)$ and $\textup{THH}^{H\F_p}_*(H\F_p \wedge_R X,H\F_p)$ are isomorphic. 
\[CB_m^R(X,H\F_p) = X^{\wedge_{R}m} \wedge_{R} H\F_p \cong (H\F_p \wedge_R X)^{\wedge_{H\F_p} m} \wedge_{H\F_p} H\F_p \cong CB_m^{H\F_p}(H\F_p \wedge_{R} X,H\F_p)\]
Indeed, this isomorphism preserves the face and degeneracy maps by standard arguments.

Furthermore, note that $H\F_p \wedge_{R} X$ represents the derived product by the q-cofibrancy assumptions on $H\F_p$ and $X$. The smash products on the right hand side are also derived because $H\F_p \wedge_{R} X$ is q-cofibrant as an (commutative) $H\F_p$-algebra, see Remark \ref{rmk cofsmash}. Properness also follows as before. By the naturality of the isomorphism in \eqref{eq hfp smashsplit}, this isomorphism of simplicial $R$-modules is also natural with respect to maps of $R$-algebras over $H\F_p$.
\end{proof}
For our calculations, we use the B\"okstedt spectral sequence in $H\F_p$-algebras. The second page of this spectral sequence is described using  Hochschild homology for graded $\fp$-algebras. These groups are defined as follows. Let $A\to B$ be a map of graded commutative $\fp$-algebras. Through this map, $B$ admits the structure of an $A$-bimodule. Mimicking the construction of the simplicial object in \eqref{eq definition of the cyclic bar construction}, one obtains a simplicial graded commutative  $\fp$-algebra $CB^{\fp}_\bullet(A,B) = A^{\otimes \bullet} \otimes B$ where we denote $\otimes_{\fp}$ by $\otimes$ throughout this work. The degree $n$ homotopy, i.e.\ the degree $n$ homology of the normalized chain complex, of this simplicial ring is the Hochschild homology group 
\[\hhfpn(A,B).\]
Note that this is a graded  $\fp$-module and in certain cases, we write $\hh^{\fp}_{*,*}(A,B)$ to emphasize the internal grading. Since $A$ and $B$ are commutative, $\hh^{\fp}_*(A,B)$ is a bigraded commutative ring. We have  
\[\hhfps(A,B) \cong \torup_*^{A^e}(A,B)\]
where $A^e = A \otimes A^{op}$.

The B\"okstedt spectral sequence is constructed for EKMM spectra in Theorem 2.8 in Chapter \rom{9} of \cite{elmendorf2007rings}. We use the version of this spectral sequence given in Theorem 8 of  \cite{hunter1996thhss} where further properties of the B\"okstedt spectral sequence are discussed. This is summarized in the theorem below. Throughout this work, when we say B\"okstedt spectral sequence we mean the spectral sequence given in the following theorem although this is not the standard terminology in the literature. Let $\mathcal{E}_{H\F_p}^{\cdot \to \cdot}$ denote the arrow category in $H\F_p$-algebras. The objects of $\arrowcat$ are morphisms in $H\F_p$-algebras and the morphisms of $\arrowcat$ are maps of morphisms of $\hfp$-algebras.

\begin{theorem}[B\"okstedt spectral sequence]\label{thm bokstedtss}
Let $X$ be a q-cofibrant $H\F_p$-algebra or commutative $H\F_p$-algebra and also let $Z$ be a q-cofibrant $H\F_p$-algebra or commutative $H\F_p$-algebra. Given $X \to Z$ in $\arrowcat$, there is a spectral sequence 
\[E^2_{s,t} = \hh^{\fp}_{s,t}(X_*,Z_*)\Longrightarrow \textup{THH}^{H\F_p}_{s+t}(X,Z)\]
\[d^r_{s,t}\co E^r_{s,t} \to E^r_{s-r,t+r-1}.\]
where  $X_* = \pi_*X$ and $Z_* = \pi_*Z$.
We often omit the subscript in $d^r_{s,t}$ and write $d^r$.
This spectral sequence is functorial in $\arrowcat$.
\end{theorem}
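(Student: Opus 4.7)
The plan is to realize the target $\thh^{\hfp}(X,Z)$ as the geometric realization of the simplicial $\hfp$-module
\[CB^{\hfp}_\bullet(X,Z) = X^{\wdg_{\hfp} \bullet} \wdg_{\hfp} Z\]
constructed exactly as in equation~\eqref{eq definition of the cyclic bar construction} (with $R=\hfp$ and $\hfp$ replaced by $Z$), and then feed it into the standard skeletal filtration spectral sequence of a proper simplicial spectrum. Our q-cofibrancy hypotheses on $X$ and $Z$ ensure that each simplicial degree represents the correct derived smash product, and that $CB^{\hfp}_\bullet(X,Z)$ is proper in the sense of \cite[\rom{7}.6.8]{elmendorf2007rings}, so that $|CB^{\hfp}_\bullet(X,Z)| \simeq X \wdg_{X^e}^L Z$ and its homotopy groups are $\thh^{\hfp}_*(X,Z)$.

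Next I would identify the $E^1$-page. The skeletal filtration of $|CB^{\hfp}_\bullet(X,Z)|$ produces a spectral sequence whose $E^1_{s,t}$ is $\pi_t$ of the $s$-th term of the normalized chain complex associated to $CB^{\hfp}_\bullet(X,Z)$. Since we are working over $\hfp$ and $\F_p$ is a field, the Künneth formula in $\hfp$-modules gives
\[\pi_*\bigl(X^{\wdg_{\hfp} s} \wdg_{\hfp} Z\bigr) \cong X_*^{\otimes s} \otimes Z_*,\]
where the tensor products are over $\F_p$. Thus $E^1_{*,*}$ is the $\F_p$-Hochschild complex $CB^{\F_p}_\bullet(X_*,Z_*)$, up to normalization. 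Comparing the simplicial structure maps of $CB^{\hfp}_\bullet(X,Z)$ with those of the algebraic Hochschild complex, one checks that the face maps induced on $\pi_*$ are precisely the multiplication-induced face maps of $CB^{\F_p}_\bullet(X_*,Z_*)$ (and likewise for degeneracies induced by units). Taking the $d^1$-differential as the alternating sum of these face maps, we identify
\[E^2_{s,t} = H_s\bigl(CB^{\F_p}_\bullet(X_*,Z_*)\bigr)_t = \hh^{\F_p}_{s,t}(X_*,Z_*),\]
and the bidegree of $d^r$ is the standard one for a first-quadrant homological spectral sequence arising from a skeletal filtration.

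For convergence and boundedness, I would appeal to the fact that the filtration is by skeleta of a proper simplicial $\hfp$-module, so in each total degree the spectral sequence has only finitely many contributing columns (using connectivity of $X_*$ and $Z_*$ together with the vanishing of the normalized Hochschild complex in negative bidegree). This yields strong convergence of the form stated. Finally, functoriality in the arrow category $\arrowcat$ is essentially automatic: a morphism in $\arrowcat$ induces a map of simplicial $\hfp$-modules $CB^{\hfp}_\bullet(X,Z)\to CB^{\hfp}_\bullet(X',Z')$ compatible with skeletal filtrations, hence a map of spectral sequences, which on $E^2$-pages coincides with the Hochschild-homology map induced by the corresponding morphism of pairs of graded $\F_p$-algebras.

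The main obstacle will be the Künneth identification of $E^1$: one must verify that the simplicial identities on $\pi_*$ really do reproduce the algebraic Hochschild face and degeneracy maps, and that working with commutative versus associative $\hfp$-algebras does not affect the identification of the $E^1$-differential. Since the spectral sequence is already set up for EKMM spectra in \cite[\rom{9}.2.8]{elmendorf2007rings} and enhanced in \cite{hunter1996thhss}, I would import these verifications and simply match up notation and the bigrading conventions, rather than redo the combinatorics.
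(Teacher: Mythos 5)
Your proposal is essentially the standard construction of the B\"okstedt spectral sequence, and it matches what the cited sources do. Note, however, that the paper does not actually prove Theorem~\ref{thm bokstedtss}: it cites \cite[IX.2.8]{elmendorf2007rings} and \cite[Theorem 8]{hunter1996thhss} and simply records the statement in a form tailored to its needs (in particular, the version where the coefficient algebra $Z$ is different from $X$, and where one works relative to $\hfp$ so the K\"unneth isomorphism is automatic). Your sketch correctly reproduces the skeleton of those references: realize $\thh^{\hfp}(X,Z)$ as $|CB^{\hfp}_\bullet(X,Z)|$ using the properness guaranteed by the q-cofibrancy hypotheses, filter by skeleta, identify $E^1$ with the normalized Hochschild complex of $(X_*,Z_*)$ via the field-coefficient K\"unneth formula, and get strong convergence from the first-quadrant structure together with connectivity. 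The two points you flag as requiring care --- matching the simplicial structure maps on $\pi_*$ with the algebraic Hochschild face/degeneracy maps, and the agreement of the associative and commutative cases --- are exactly where the cited references earn their keep, and deferring to them, as you propose, is also what the paper does. Your argument for functoriality in $\arrowcat$ is correct and is the point the paper is most explicit about adding on top of the older references, since it needs to compare spectral sequences across maps of algebras in several places.
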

\begin{remark} \label{rmk cofsmash2}
Given a map $R \to H\F_p$ of commutative $\Sp$-algebras and a q-cofibrant (commutative) $R$-algebra $X$, $H\F_p \wedge_{R} X$ is a q-cofibrant (commutative) $H\F_p$-algebra; see Remark \ref{rmk cofsmash}. Therefore, we can apply Theorem \ref{thm bokstedtss} above to  objects of the form $H\F_p \wedge_{R} X$. 
\end{remark}
For the calculation of the $E^2$ page of this spectral sequence, we often use the following proposition that provides a splitting of the $E^2$ page. 

\begin{proposition} \label{prop tensorsplitting}
Let $A$ be an augmented graded   commutative $\fp$-algebra with a splitting  $A = A_1 \otimes A_2 \otimes \cdots \otimes A_n$ over $\F_p$ where each $A_i$ is also an augmented graded commutative $\F_p$-algebra. In this situation, there is an isomorphism of rings
\[\hhfps(A,\F_p) \cong \hhfps(A_1,\F_p) \otimes \hhfps(A_2,\F_p) \otimes \cdots \otimes \hhfps(A_n,\F_p). \]
Let $A'= A'_1 \otimes A'_2 \otimes \cdots \otimes A'_n$ be another splitting as above. The isomorphism above is natural with respect to  every map $f\co A\to A'$ that splits as $f = f_1 \otimes  \cdots \otimes f_n$ where each $f_i\co A_i \to A_i'$ is a map of augmented graded $\fp$-algebras. 
\end{proposition}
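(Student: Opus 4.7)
The plan is to establish the isomorphism at the level of the simplicial graded commutative $\fp$-algebra $CB^{\fp}_\bullet(A,\fp)$ and then pass to homology via the Eilenberg--Zilber and Künneth theorems, which are particularly well behaved since we are working over the field $\fp$.

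First, I would exploit the fact that, by construction, the simplicial ring computing $\hhfps(A,\fp)$ is $CB^{\fp}_m(A,\fp) = A^{\otimes m} \otimes \fp = A^{\otimes m}$ in simplicial degree $m$, with face maps built from the multiplication and augmentation of $A$, and degeneracy maps built from its unit. The splitting $A \cong A_1 \otimes \cdots \otimes A_n$ of augmented graded commutative $\fp$-algebras, together with the evident permutation of tensor factors, yields an isomorphism of simplicial graded commutative $\fp$-algebras
\[
CB^{\fp}_\bullet(A,\fp) \;\cong\; CB^{\fp}_\bullet(A_1,\fp) \otimes \cdots \otimes CB^{\fp}_\bullet(A_n,\fp),
\]
where the right-hand side carries the degreewise tensor product of simplicial $\fp$-algebras. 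Checking that the face and degeneracy maps match on each side reduces to the fact that the multiplication, unit, and augmentation of $A$ are themselves the tensor products of those of the $A_i$.

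Next, I would pass to normalized chain complexes and iteratively apply the Eilenberg--Zilber shuffle map. Since $\fp$ is a field, the Künneth theorem gives an isomorphism of graded $\fp$-modules
\[
\hhfps(A,\fp) \;\cong\; \hhfps(A_1,\fp) \otimes \cdots \otimes \hhfps(A_n,\fp).
\]
To upgrade this to a ring isomorphism, I would use the classical fact that for (augmented) graded commutative $\fp$-algebras the shuffle product equips the normalized chain complex of $CB^{\fp}_\bullet(-,\fp)$ with a commutative DGA structure, that this product induces the ring structure on $\hhfps(-,\fp)$ stated in the proposition, and that the Eilenberg--Zilber shuffle map for a tensor product of simplicial algebras is a map of DGAs. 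Composing these identifications yields the desired isomorphism of bigraded commutative $\fp$-algebras.

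For the naturality statement, given $f = f_1 \otimes \cdots \otimes f_n : A \to A'$, functoriality of the cyclic bar construction shows that the induced map $CB^{\fp}_\bullet(f,\fp)$ splits, under the identification above, as the tensor product of the maps $CB^{\fp}_\bullet(f_i,\fp)$. Naturality of Eilenberg--Zilber and of the Künneth isomorphism over a field then give the naturality claim. The main obstacle I anticipate is the verification that the ring structure obtained via shuffle/Künneth matches the product on $\hhfps(A,\fp)$ coming from the simplicial graded commutative ring structure of $CB^{\fp}_\bullet(A,\fp)$; this compatibility is classical but requires some bookkeeping with the Alexander--Whitney and shuffle maps, and it is the step where care is needed to produce a ring, rather than only a graded $\fp$-module, isomorphism.
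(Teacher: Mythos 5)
Your proposal takes essentially the same route as the paper: identify the cyclic bar construction $CB^{\fp}_\bullet(A,\fp)$ with the degreewise tensor product $CB^{\fp}_\bullet(A_1,\fp)\otimes\cdots\otimes CB^{\fp}_\bullet(A_n,\fp)$ of simplicial graded rings, then pass to homotopy using flatness over the field $\fp$. The paper phrases the last step tersely (homotopy ring of a degreewise tensor product of simplicial rings over a field is the tensor product of homotopy rings) where you make the Eilenberg--Zilber/shuffle/K\"unneth bookkeeping explicit, but the underlying argument and the naturality deduction are the same.
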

\begin{proof}

Since tensor product of simplicial commutative  rings is degreewise, we have 
\begin{equation}\label{eq a chain of equivalences for tor splitting}
\begin{split}
    CB^{\fp}_\bullet(A,\fp) \cong& A^{\otimes \bullet}\\ \cong& A_1^{\otimes \bullet} \otimes A_2^{\otimes \bullet} \otimes \cdots \otimes A_n^{\otimes \bullet}\\ \cong& CB^{\fp}_\bullet(A_1,\fp) \otimes CB^{\fp}_\bullet(A_2,\fp)\otimes \cdots \otimes  CB^{\fp}_\bullet(A_n,\fp).
\end{split}
\end{equation}
Because  the tensor products are over $\F_p$, we only have flat modules. Therefore,  the homotopy ring of the bottom right hand side above is the tensor product over $1 \leq i \leq n$ of the homotopy rings of the simplicial rings $CB^{\fp}_\bullet(A_i,\fp)$. Therefore, the equivalence in the proposition follows by noting that the homotopy of $CB^{\fp}_\bullet(A_i,\fp)$ is $\hhfps(A_i,\F_p)$ for each $i$. The naturality statement follows by the naturality of the isomorphisms in \eqref{eq a chain of equivalences for tor splitting}.
\end{proof}

The following proposition provides the $E^2$ page of the B\"okstedt spectral sequence above when $X_*$ is a free graded commutative ring and $Z_*=X_*$ or $Z_* = \F_p$. See  Proposition 2.1 in \cite{mcclure1993onthethhbu} for an instance of this result. Note that for a homogeneous element $x$ in a graded ring, we let $\sigma x$ denote a degree $(1,\lv x \rv)$ element.  Furthermore, $\Gamma_{\F_p}(-)$ denotes the divided power algebra over $\fp$ on the given generators. For our purposes, it is sufficient to note that $\Gamma_{\F_p}(x_1,x_2,...)\cong \F_p[x_1,x_2,...]$ as $\fp$-modules (similarly for finitely many generators).  The degree $k\lv x \rv$ element in $\Gamma_{\F_p}(x)$ is denoted by $\gamma_k(x)$; this is called the $k$'th divided power of $x$.

\begin{proposition} \label{prop tor of freecommutative}
For $p$ an odd prime, let $A= \F_p[x_1,x_2,...]\otimes \Lambda_{\F_p}(y_1,y_2,...)$ with even $\lv x_i \rv$ and odd $\lv y_i\rv$ for every $i$. For $p=2$, let $A= \F_2[x_1,x_2,...]$ with no restrictions on $\lv x_i \rv$. We have the following ring isomorphisms
\[\hhfps(A,A) \cong A \otimes \Lambda_{\F_p}(\sigma x_1, \sigma x_2,...) \otimes \Gamma_{\F_p}(\sigma y_1, \sigma y_2,...)\]
\[\hhfps(A,\F_p) \cong \Lambda_{\F_p}(\sigma x_1, \sigma x_2,...) \otimes \Gamma_{\F_p}(\sigma y_1, \sigma y_2,...)\]
where $\Gamma_{\F_p}(\sigma y_1, \sigma y_2,...)$ should be omitted for $p=2$. Furthermore, the map $\hhfps(A,A) \to \hhfps(A,\F_p)$ induced by the augmentation map $\epsilon \co A \to \F_p$ of the second factor is given by $\epsilon \otimes id$ on the right hand side of these isomorphisms where $id$ denotes the identity  map of $\Lambda_{\F_p}(\sigma x_1, \sigma x_2,...) \otimes \Gamma_{\F_p}(\sigma y_1, \sigma y_2,...)$.

This result still holds when there are finitely many polynomial algebra generators and/or finitely many exterior algebra generators of $A$. 
\end{proposition}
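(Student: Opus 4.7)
The plan is to reduce to single-generator algebras via Proposition~\ref{prop tensorsplitting}, compute the single-generator cases via standard small free resolutions, and then recover both the ring structure and the naturality from the construction. For $\hh^{\F_p}_*(A,\F_p)$, the splitting $A \cong \bigotimes_i \F_p[x_i] \otimes \bigotimes_j \Lambda_{\F_p}(y_j)$ feeds directly into Proposition~\ref{prop tensorsplitting}, reducing everything to single generators. For $\hh^{\F_p}_*(A,A)$ the proposition as stated does not apply verbatim, but the same argument works: the cyclic bar object $A^{\otimes \bullet + 1}$ splits as a degreewise tensor product of the single-generator cyclic bar objects with face and degeneracy maps respecting the splitting, so the K\"unneth theorem over the field $\F_p$ yields the corresponding tensor splitting on homology.

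Next I would carry out the two single-generator computations. For a polynomial generator $\F_p[x]$ with $|x|$ even (or arbitrary $|x|$ if $p=2$), the element $x \otimes 1 - 1 \otimes x$ is a regular non-zero-divisor in the commutative ring $\F_p[x]^{\otimes 2}$, so the Koszul complex
\[
0 \to \F_p[x]^{\otimes 2} \xrightarrow{\,x\otimes 1\, -\, 1\otimes x\,} \F_p[x]^{\otimes 2} \to \F_p[x] \to 0
\]
is a length-one free resolution over the enveloping algebra. Tensoring with $\F_p[x]$ or $\F_p$ over $\F_p[x]^{\otimes 2}$ kills the differential and produces the claimed modules. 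For an exterior generator $\Lambda_{\F_p}(y)$ with $|y|$ odd at odd $p$, the element $y \otimes 1 - 1 \otimes y$ squares to zero in the enveloping algebra, so the Koszul complex does not terminate; instead I would use the standard Cartan-type minimal resolution of shape $\Gamma_{\F_p}(Z) \otimes \Lambda_{\F_p}(y)^{\otimes 2}$ with differential $\gamma_k(Z) \mapsto (y\otimes 1 - 1\otimes y)\,\gamma_{k-1}(Z)$, where $Z$ has internal degree $|y|$. The same tensor-and-kill argument then yields the divided power modules $\Lambda_{\F_p}(y) \otimes \Gamma_{\F_p}(\sigma y)$ and $\Gamma_{\F_p}(\sigma y)$ respectively.

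Finally, to upgrade these module isomorphisms to ring isomorphisms I would invoke the shuffle product on the cyclic bar complex of a commutative $\F_p$-algebra, which makes $\hh^{\F_p}_*$ a commutative bigraded ring. For polynomial generators a direct shuffle calculation gives $(\sigma x)^2 = 0$: at odd $p$ this is graded commutativity of an odd total-degree class, and at $p=2$ it holds because the two $(1,1)$-shuffles of $[x]$ with itself are identical and their sum vanishes mod $2$. For exterior generators the shuffle product matches divided powers via $\gamma_i(\sigma y)\cdot \gamma_j(\sigma y) = \binom{i+j}{i}\gamma_{i+j}(\sigma y)$, which is again a straightforward shuffle count. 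The naturality statement for the augmentation then follows from the naturality clause of Proposition~\ref{prop tensorsplitting}, since $\epsilon\co A\to \F_p$ is itself a tensor of single-generator augmentations and each such augmentation induces $\epsilon\otimes \mathrm{id}$ on the corresponding Koszul or divided-power resolution. The main obstacle I expect is pinning down the divided power ring structure rather than merely a free module structure on $\hh^{\F_p}_*(\Lambda_{\F_p}(y),-)$; this is classical but does require the explicit shuffle product identification and is not visible from the bare Tor computation.
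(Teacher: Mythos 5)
Your proof is correct, but it diverges from the paper's route in the single-generator computation. The paper avoids building explicit $A^e$-free resolutions: after the same tensor splitting reduction, it applies an automorphism of $A^e = A\otimes A$ (sending $1\otimes x_1\mapsto x_1\otimes 1 - 1\otimes x_1$) to decouple the two factors, then invokes the K\"unneth formula for $\mathrm{Tor}$ to reduce to the classical one-sided $\mathrm{Tor}^{A}_*(\F_p,\F_p)$ computations ($\Lambda(\sigma x)$ for a polynomial generator, $\Gamma(\sigma y)$ for an exterior one). You instead write down the Koszul resolution over $A^e$ for the polynomial generator and a divided-power resolution over $\Lambda(y)^e$ for the exterior generator, which is more explicit and arrives at the same module answer directly. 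Your treatment of the ring structure is actually more careful than the paper's: the paper handles the multiplicative structure by observing that $CB^{\F_p}_\bullet(A,A)$ is symmetric monoidal in $A$ and leaves the single-generator ring identification implicit, whereas you track it explicitly through the shuffle product, including the $\binom{i+j}{i}$ divided-power multiplication on $\Gamma(\sigma y)$. (Incidentally, for $(\sigma x)^2=0$ you do not even need the shuffle argument at $p=2$: the Koszul resolution has length one, so $\mathrm{Tor}$ vanishes above homological degree one, forcing $(\sigma x)^2=0$ for all $p$ and all $|x|$.) Both arguments are valid; the K\"unneth trick is slightly slicker since it never leaves the one-sided $\mathrm{Tor}$ world, while your resolution-based argument makes the naturality and the ring structure more visibly concrete.
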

\begin{proof}
 We prove the result for odd primes, the $p=2$ case follows similarly. 
 
 We start with the proof of the second isomorphism. Due to Proposition \ref{prop tensorsplitting}, it is sufficient to prove this isomorphism for $A = \fp[x_1]$ and $A = \lambdafp(y_1)$. We start with the proof of the first case; the latter follows similarly.
 
 Let $A = \fp[x_1]$. There is an automorphism of $A^e \cong A \otimes A$ given by  
 \[x_1 \otimes 1 \to x_1 \otimes 1 \text{\ and \ } 1\otimes x_1 \to x_1 \otimes 1 - 1 \otimes x_1.\]
  Note that $A^e= A \otimes A$ since $A$ is graded commutative. We consider the $A\otimes A$-module structure on $A$ after forgetting through this automorphism. Therefore, we obtain an action where the first factor in $A \otimes A$ acts on $A$ in the usual way and the second factor acts trivially.
 
 At this point, we use what is called the K\"unneth formula. Given two graded commutative  rings $A_1$ and $A_2$, let $B_1$ and $C_1$ be commutative $A_1$-algebras and let $B_2$ and $C_2$ be commutative $A_2$-algebras. There is an isomorphism of rings
\begin{equation*}\label{eq kunneth}
\text{Tor}_*^{A_1 \otimes A_2}(B_1 \otimes B_2,C_1 \otimes C_2) \cong \text{Tor}_*^{A_1}(B_1,C_1) \otimes \text{Tor}_*^{A_2}(B_2,C_2). 
\end{equation*} 

To apply the K\"unneth formula in our case, let $A_1=A_2=A$, $B_1=A$ and $B_2=C_1=C_2=\F_p$. We obtain 
 \[\hhfps(A,\fp)\cong\tor_*^{A^e}(A,\fp) \cong \tor_*^{A}(A,\fp) \otimes \tor_*^A(\F_p,\F_p)\cong \lambdafp(\sigma x_1).\]
  This gives the second isomorphism in the proposition for $A = \fp[x_1]$. 
  
  Applying the same arguments and noting $\tor^{\lambdafp(y_1)}(\fp,\fp) = \Gamma_{\fp}(\sigma y_1)$ gives the second isomorphism in the proposition for $A = \lambdafp(y_1)$. 
  
  For the first isomorphism in the proposition, note that the construction $CB^{\fp}_{\bullet}(A,A) = A^{\ot \bullet}$ is symmetric monoidal with respect to the input ring $A$. Therefore, $\hhfps(A,A)$ is also symmetric monoidal. The result follows by applying the previous arguments to the cases $A = \fp[x_1]$ and $A= \lambdafp[y_1]$.
 \end{proof}
 
 We also make use of the following proposition for our Hochschild homology computations. 
 
 \begin{proposition} \label{prop tor of quot of polynomial}
Let $A = \F_p[z]/(z^m)$ where $\lv z \rv$ is even or $p=2$. There is an isomorphism of $\F_p$-modules
\[\hhfps(A,\F_p) \cong \Lambda_{\F_p}(\sigma z) \otimes \Gamma_{\F_p}(\varphi^m z) \]
where $\textup{deg}(\sigma z)  = (1,\lv z \rv)$ and $\textup{deg}(\varphi^m z) = (2,m\lv z \rv )$. 
\end{proposition}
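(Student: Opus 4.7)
The plan is to compute $\hhfps(A,\F_p) \cong \tor_*^{A^e}(A,\F_p)$ via the classical small periodic $A^e$-free resolution of a truncated polynomial algebra. Since $\lv z \rv$ is even or $p=2$, there are no Koszul sign issues and $A^e = A \otimes A$ in the graded commutative sense. Setting $v = z \otimes 1 - 1 \otimes z$ (internal degree $\lv z \rv$) and $N = \sum_{i+j=m-1} z^i \otimes z^j$ (internal degree $(m-1)\lv z \rv$), I would consider
\[\cdots \xrightarrow{\cdot N} \Sigma^{(m+1)\lv z \rv} A^e \xrightarrow{\cdot v} \Sigma^{m\lv z \rv}A^e \xrightarrow{\cdot N} \Sigma^{\lv z \rv}A^e \xrightarrow{\cdot v} A^e \xrightarrow{\mu} A \to 0,\]
where the internal shifts are chosen so that every differential has total internal degree zero, giving $A^e$-generators in bidegrees $(2k, km\lv z\rv)$ and $(2k+1, (km+1)\lv z\rv)$.

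The two things to verify are $v \cdot N = 0$ and $N \cdot v = 0$, each being a straightforward telescoping computation that uses $z^m = 0$ (the boundary terms vanish). Exactness of the resulting periodic complex is the standard fact that this is a free resolution of $\F_p[z]/(z^m)$ over its enveloping algebra; I would simply quote this rather than re-derive it, although it can also be checked by splitting off each kernel.

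To extract $\tor$, I would apply $- \otimes_{A^e} \F_p$, where $\F_p$ carries the $A^e$-module structure via the augmentation. Both $v$ and $N$ act as zero on $\F_p$ ($v$ because each tensor factor kills $\F_p$; $N$ because for $m \geq 2$ every summand contains a positive power of $z$), so every differential in the resulting complex vanishes. We read off $\hhfps(A,\F_p) \cong \F_p$ in each homological degree $n$, with the single basis vector in bidegree $(2k, km\lv z\rv)$ for $n = 2k$ and $(2k+1, (km+1)\lv z\rv)$ for $n = 2k+1$.

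Finally I would identify this bigraded $\F_p$-module with $\Lambda_{\F_p}(\sigma z) \otimes \Gamma_{\F_p}(\varphi^m z)$: the basis element $\gamma_k(\varphi^m z)$ sits in bidegree $(2k, km\lv z\rv)$ and $\sigma z \cdot \gamma_k(\varphi^m z)$ in bidegree $(2k+1, (km+1)\lv z\rv)$, matching the count above. No obstacle in the argument is serious; the only nontrivial input is exactness of the periodic resolution, which is classical.
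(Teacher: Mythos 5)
Your proof is correct and takes essentially the same route as the paper: both use the classical small periodic free $A^e$-resolution of $A$ (with alternating multiplication by $z\otimes1-1\otimes z$ and by the norm element $\sum z^i\otimes z^{m-1-i}$), apply $-\otimes_{A^e}\F_p$, and observe that all differentials vanish because both elements lie in the augmentation ideal of $A^e$. The only difference is cosmetic: the paper cites Larsen--Lindenstrauss (1.6.1) for the resolution rather than re-deriving it, and labels the two maps $u$ and $v$ where you use $v$ and $N$.
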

\begin{proof}
 There is a free $A^e$ resolution of $A$  \cite[(1.6.1)]{larsen1992larsen},
 \[ \cdots \xrightarrow{v} \Sigma^{(m+1)\lv z \rv}A^e\xrightarrow{u} \Sigma^{m\lv z \rv}A^e\xrightarrow{v} \Sigma^{\lv z \rv}A^e \xrightarrow{u} A^e \xrightarrow{m}A\]
 where this resolution contains $\Sigma^{\lv z \rv+im \lv z \rv}A^e$ in homological degree $2i+1$ and $\Sigma^{im \lv z \rv}A^e$ in homological degree $2i$. Furthermore, $m$ is the multiplication map of $A$, $u$ multiplies by $ z \otimes 1 - 1 \otimes z$ and $v$ multiplies by $\Sigma_{i=0}^{m-1}z^i\otimes z^{m-i-1}$. Applying $- \otimes_{A^e} \F_p$ to this resolution, all the differentials become trivial and we obtain the desired result.  
\end{proof}
 
\section{On the differentials of the B\"okstedt spectral sequence}\label{sec differentials of the bokstedt ss}

  For the rest of this work, let $\hz$ be q-cofibrant as a commutative $\sph$-algebra and let $\hfp$ be q-cofibrant as a commutative $\hz$-algebra. Since the category of commutative $H\Z$-algebras is the same as the category of commutative $\Sp$-algebras under $H\Z$, cofibrations of commutative $H\Z$-algebras forget to cofibrations of commutative $\Sp$-algebras. This implies that $\hfp$ is also  q-cofibrant as a commutative $\sph$-algebra. Throughout this section, most of the smash products are derived due to our q-cofibrancy assumptions. Whenever a smash product does not necessarily  represent the derived smash product, we warn the reader.
  
  Throughout this section, let $d=\lvert x \rvert = 2p-2$, let $p$ be an odd prime and let $Y$ denote a q-cofibrant $H\Z$-algebra corresponding to the unique non-formal DGA with homology  $\Lambda_{\F_p}(x)$, see Example \ref{ex duggershipley}. The degree $0$ Postnikov section map of $Y$ represents a map $Y\to \hfp$ in the homotopy category of $\hz$-algebras. Since  all objects are q-fibrant in EKMM spectra,  $\hfp$ is q-fibrant as an $\hz$-algebra. Furthermore, $Y$ is q-cofibrant by assumption. Therefore, the degree $0$ Postnikov section map of $Y$ provides a map 
\[\epsilon_Y \co Y \to \hfp\]
of $\hz$-algebras.
 
 The proof of Theorem \ref{thm nontrivial} relies on the classification of Postnikov extensions of DGAs with truncated polynomial homology $\fp[x]/(x^m)$ and $d$ Postnikov section equivalent to $Y$. Due to our earlier discussions, this boils down to  Hochschild homology computations. In this section, we compute the relevant differentials in the B\"okstedt spectral sequences computing these Hochschild homology groups.
\subsection{The B\"okstedt spectral sequence for $Y$} \label{subsec bokstedt ss for Y}

 We start with the B\"okstedt spectral sequence computing the Hochschild homology groups that classify the relevant Postnikov extensions of $Y$. Namely, we are interested in the B\"okstedt spectral sequence computing  $\text{THH}^{H\F_p}(H\F_p \wedge_{H\Z}Y, H\F_p)$ where $H\F_p \wedge_{H\Z}Y$ acts on $\hfp$ via the composite map
  \begin{equation}\label{eq augmentation map of fp wdgz Y}
 H\F_p \wedge_{H\Z} Y \xrightarrow{id \wedge_{H\Z} \epsilon_Y} H\F_p \wedge_{H\Z} H\F_p \xrightarrow{m} H\F_p.
   \end{equation}
  Here, $m$ denotes the multiplication map on $H\F_p$ and $id$ denotes the identity map. Due to  Theorem \ref{thm bokstedtss}, this spectral sequence is given by:
 \begin{equation*} \label{eq ssforY}
E^2_{s,t} = \hh^{\fp}_{*,*}(\pis(\hfp \wdgz Y),\F_p)\Longrightarrow \text{THH}^{H\F_p}_{s+t}(H\F_p \wedge_{H\Z}Y, H\F_p).
\end{equation*}
 We let $E$ denote this spectral sequence for the rest of this section.

  By Lemma \ref{lemma homology}, we have an isomorphism of rings \[\pis(\hfp \wdgz Y) \cong \Lambda_{\F_p}(\tau_0)\otimes \Lambda_{\F_p}[x]\] 
  where $\lv \tau_0 \rv = 1$   and  $\lv x \rv = 2p-2$. Recall that $d = \lv x \rv = 2p-2$ is a standing assumption in this section.
We apply Proposition \ref{prop tensorsplitting} for the case $A_1 = \Lambda_{\F_p}(\tau_0)$, $A_2 = \Lambda_{\F_p}(x)$ and obtain
\begin{equation*} 
E^2 \cong \hhfps(\Lambda_{\F_p}(\tau_0),\F_p) \otimes \hhfps(\Lambda_{\F_p}(x),\F_p).  
\end{equation*}
By Proposition \ref{prop tor of freecommutative}, we have $\hhfps(\Lambda_{\F_p}(\tau_0),\F_p) \cong \Gamma_{\F_p}(\sigma \tau_0)$ where $\text{deg}(\sigma \tau_0)= (1,1)$. Note that we cannot apply this proposition for the second factor because $\lv x \rv$ is even. In this case, by Proposition \ref{prop tor of quot of polynomial} we have $\hhfps(\Lambda_{\F_p}(x),\F_p) \cong \Lambda_{\F_p}(\sigma x) \otimes \Gamma_{\F_p}(\varphi^2x)$ where $\text{deg}(\sigma x) = (1,d)$ and $\text{deg}(\varphi^2x) = (2,2d)$. We obtain
\begin{equation*}
E^2 \cong \Gamma_{\F_p}(\sigma \tau_0) \otimes \Lambda_{\F_p}(\sigma x) \otimes \Gamma_{\F_p}(\varphi^2x).
\end{equation*}
The following is a picture of the $E^2$-page where the horizontal axis denotes the homological degree and the vertical axis denotes the internal degree.
\begin{center}
\begin{tikzpicture}[thick,scale=0.2, every node/.style={scale=0.85}]
\matrix (m) [matrix of math nodes,
             nodes in empty cells,
             nodes={minimum width=6.95ex,
                    minimum height=7ex,
                    outer sep=-5pt},
             column sep=-0.3ex, row sep=-3.4ex,
             text centered,anchor=center]{
&2d  &    &    &  \F_p \varphi^2x  &   &  &                 \strut&  \\
&\vdots  &    &   &    & \reflectbox{$\ddots$}&    &  &             \\
&d+1  &   \strut  &     &  \fp \gamma_1(\sigma \tau_0) \sigma x   &   & &  &  &              \\
&d&     &  \fp \sigma x &   &  &     &  & \\  
&\vdots  &    &    &    &   &    & \strut&   \reflectbox{$\ddots$}                   \\
&p+1  &   \strut  &     &     &  &  &
         \F_p\gamma_{p+1}(\sigma \tau_0)  & \\
&p&     &  &   &  & \fp \gamma_p(\sigma \tau_0)               &  &\\
&\vdots      &    &   &    & \reflectbox{$\ddots$} & & &  \\
&2      &     &    & \fp \gamma_2(\sigma \tau_0)  &  &          &  & \\
&1 &    &  \F_p\sigma \tau_0  & & & &  & \\
&0      &  \F_p   &    &    &     &    & & \\
&\quad\strut &   0 &  1  &  2  &  \cdots &  p  &  p+1 &        \cdots \\        \\};
\draw[thick] (m-1-2.north east) -- (m-12-2.east) ;
\draw[thick] (m-12-2.north) -- (m-12-9.north east) ;
\end{tikzpicture}
\end{center}
 \begin{lemma}\label{lemma e2 page of the bokstedt of Y}
 Let $E$ denote the B\"okstedt spectral sequence computing  $\thh^{H\F_p}(H\F_p \wedge_{H\Z}Y, H\F_p)$ as above. 
 The $E^2$ page is given by
  \begin{equation*}
     \begin{split}
E^2 
& \cong \Gamma_{\F_p}(\sigma \tau_0) \otimes \Lambda_{\F_p}(\sigma x) \otimes \Gamma_{\F_p}(\varphi^2x).\\
     \end{split}
 \end{equation*}  
 For every $z \in  \Gamma_{\F_p}(\sigma \tau_0) \otimes \Lambda_{\F_p}(\sigma x) \subset E^2$ and $1<r<p-1$,  
 \[d^rz= 0\]
 and if $z\neq 0$, then $z$ is not in the image of $d^r$. 
 
 \end{lemma}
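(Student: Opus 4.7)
The $E^2$-page identification was already established in the paragraph preceding the lemma statement, so the substantive content is the claim about differentials. My approach is a pure bidegree-counting argument: I will show that for $1 < r < p-1$, no element of $E^2$ sits in a bidegree that could serve either as the target of $d^r$ applied to $z$ or as the source of a $d^r$ landing on $z$.

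The key observation concerns the set of bidegrees realized in $E^2$. Every monomial $\gamma_a(\sigma \tau_0)(\sigma x)^{\varepsilon}\gamma_b(\varphi^2 x)$ has bidegree $(s,t)=(a+\varepsilon+2b,\,a+\varepsilon d+2bd)$, so
\[
t-s = (\varepsilon + 2b)(d-1) = (\varepsilon+2b)(2p-3).
\]
Hence nonzero elements of $E^2$ occur only in bidegrees $(s,t)$ with $t-s$ a nonnegative integer multiple of $2p-3$. For an element $z$ of the subring $\Gamma_{\F_p}(\sigma\tau_0)\otimes \Lambda_{\F_p}(\sigma x)$ we have $b=0$, so $t-s$ equals $0$ (when $\varepsilon=0$) or $d-1=2p-3$ (when $\varepsilon=1$).

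The differential $d^r\colon E^r_{s,t}\to E^r_{s-r,t+r-1}$ shifts the value of $t-s$ by $+(2r-1)$, while the ``preimage direction'' shifts it by $-(2r-1)$. For $z$ with $t-s=0$: the target of $d^r z$ has $t-s=2r-1$, which is odd and positive, and thus must satisfy $2r-1 \geq 2p-3$, i.e.\ $r\geq p-1$; the potential source has $t-s=1-2r<0$, contradicting $t-s\geq 0$. For $z$ with $t-s=d-1$: the target of $d^r z$ has $t-s = d-1+2r-1 = d+2r-2$, and setting this equal to $k(2p-3)$ for $k=1,2,\ldots$ yields respectively $r=\tfrac12$, $r=p-1$, or $r>p-1$; similarly the source has $t-s=d-2r=2p-2-2r$, forcing $r=p-1$ or $r<0$. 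In every case, no value of $r$ with $1<r<p-1$ can arise.

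Since $E^r$ is a subquotient of $E^2$, vanishing in the relevant bidegrees on $E^2$ forces vanishing on $E^r$, and the claim follows. There is no real obstacle here once the divisibility condition $t-s \in (2p-3)\mathbb{Z}_{\geq 0}$ is extracted; the only care required is bookkeeping of the two parity cases $\varepsilon=0,1$ and verifying that every admissible equation forces $r\geq p-1$. Note that for $p=3$ the stated range $1<r<p-1$ is empty, so the content begins at $p=5$.
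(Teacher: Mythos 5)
Your proof is correct and follows essentially the same degree-counting strategy as the paper: both locate all classes on lines $t = s + c$ and check that for $1 < r < p-1$ no populated line can be connected to another by a $d^r$. Your reformulation in terms of the divisibility condition $t-s \in (2p-3)\Z_{\geq 0}$ is a slightly cleaner way to package the same observation the paper makes by listing the populated lines $c = 0$, $c = 2p-3$, and $c \geq 4p-6$.
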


 \begin{proof}
The identification of the $E^2$ page is proved before the statement of the lemma. We only need to show the statement regarding the differentials. Indeed, this follows solely by degree considerations. We provide the details below. Recall that \[\text{deg}(\sigma \tau_0) = (1,1), \  \text{deg}(\gamma_k(\sigma \tau_0))  = (k,k),\]
  \[ \text{deg}(\sigma x) = (1,2p-2) \text{\ and\ } \text{deg}(\varphi^2 x) = (2,4p-4).\]
  Let $s$ denote the first coordinate, i.e.\ the homological degree and $t$ denote the second coordinate, i.e.\ the internal grading of $E^2$. Note that the elements $\gamma_k(\sigma \tau_0)$ lie on the line $t =s$ and the elements $\gamma_k(\sigma \tau_0) \sigma x$ are on the line $t = s +2p-3$. Furthermore, $\varphi^2x$ is on the line $t = s + 4p-6$. Given an element $y$ on the line $t = s+c_1$ and another element $z$ on the line $t = s+ c_2$ we have $\text{deg}(y) = (m, m+ c_1)$ and $\text{deg}(z) = (n, n+ c_2)$ for some $m$ and $n$. Therefore, $\text{deg}(yz) = (m+n,m+n+c_1+c_2)$ and this shows that $yz$ lies on the line $t= s + c_1 + c_2$. Note that we denote $y \ot z$ by $yz$. This in particular shows that every element on $E^2$ other than the ones in $\Gamma_{\F_p}(\sigma \tau_0) \otimes \Lambda_{\F_p}(\sigma x)$ lie on a line $y= x+ c$ for some $c \geq 4p-6$, rest of the elements, i.e.\ the ones in $\Gamma_{\F_p}(\sigma \tau_0) \otimes \Lambda_{\F_p}(\sigma x)$, are either on $t =s $ or $t= s+ 2p-3$.
  
  Recall that  
  \[d^r_{s,t} \co E^r_{s,t} \to E^r_{s-r,t+r-1}.\]
  Therefore a class on the line $t = s+ c$ hits an element on the line $t = s+ c+2r-1$ under $d^r$.
  
  Since there are no non-trivial elements on $E^2$ who are on a line $t= s+c$ with a $c<0$, the elements of the form $\gamma_k(\sigma \tau_0)$ do not get hit by the differentials for every $k$. Furthermore, $d^r \gamma_k(\sigma \tau_0)$ lies on the line $t = s+ 2r-1$ and for $1<r<p-1$, we have $1<2r-1< 2p-3$. Since there are no non-trivial classes on these lines, this shows that $d^r\gamma_k(\sigma \tau_0)= 0$ for every $k \geq 0$ and $1<r<p-1$. 
  
  Now we consider elements of the form $\gamma_k(\sigma \tau_0) \sigma x$ for some $k$. These elements are on the line $t=s+2p-3$. They possibly receive differentials from the lines $t = s+c$ with $c<2p-3$ but the only non-trivial classes on these degrees are of the form $\gamma_j(\sigma \tau_0)$ and we already showed that these classes carry trivial differentials up to the $p-2$ page. Therefore $\gamma_k(\sigma \tau_0) \sigma x$ is not in the image of $d^r$ for every $1<r<p-1$ and every $k$. Furthermore, $d^r( \gamma_k(\sigma \tau_0) \sigma x)$ lies on the line $t = s+ 2p-3 +2r-1$ and for $1<r<p-1$, we have $2p-2<2p-3+2r-1 <4p-6$. By the discussion above, there are no non-trivial classes on these lines and we deduce that $d^r( \gamma_k(\sigma \tau_0) \sigma x) = 0$ for every $k \geq 0$ and $1<r<p-1$.
  \end{proof}
  The following lemma provides our first non-trivial differential in the B\"okstedt spectral sequence $E$.
  \begin{lemma}\label{lem first nontrivial differential}
  In the B\"okstedt spectral sequence $E$ computing $\textup{THH}^{H\F_p}_*(H\F_p \wedge_{H\Z}Y, H\F_p)$, we have: 
 \[d^{p-1} \gamma_p(\sigma \tau_0) =  \sigma x\]
 up to a unit of $\fp$. 
  \end{lemma}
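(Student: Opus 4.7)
The plan is to transport the classical B\"okstedt differential $d^{p-1}\gamma_p(\sigma\tau_0) = c\,\sigma\xi_1$ (for some $c \in \F_p^{\times}$, where $\xi_1 \in \mathcal{A}_{2p-2}$ is the standard generator of the mod $p$ dual Steenrod algebra $\mathcal{A}_* = \pis(\hfp \wdg \hfp)$) from the classical B\"okstedt spectral sequence computing $\thh(\hfp) = \thh^{\sph}_*(\hfp, \hfp)$ into our spectral sequence $E$. The bridge will be an auxiliary B\"okstedt spectral sequence associated to $\hfp \wdg HY$ (smash over $\sph$) together with the natural map $\hfp \wdg HY \to \hfp \wdgz Y$ of $\hfp$-algebras induced by $\sph \to \hz$, with crucial input from the topological equivalence $HY \simeq HZ_0$ of $\sph$-algebras recorded in Example \ref{ex duggershipley}.

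First I would introduce the auxiliary B\"okstedt spectral sequence $E'$ computing $\thh^{\hfp}_*(\hfp \wdg HY, \hfp) \cong \thh^{\sph}_*(HY, \hfp)$ (the identification is Proposition \ref{prop thhchangeofbase} applied to $R = \sph$). Using $HY \simeq HZ_0$ as $\sph$-algebras and the fact that $HZ_0$ is an $\hfp$-algebra (since $Z_0$ is an $\fp$-DGA), one identifies $\pi_*(\hfp \wdg HY) \cong \mathcal{A}_* \otimes_{\F_p} \Lambda_{\F_p}(x)$ as graded commutative $\F_p$-algebras. Proposition \ref{prop tensorsplitting} then yields
\[E^{\prime 2} \cong \hh^{\fp}_*(\mathcal{A}_*, \F_p) \otimes_{\F_p} \hh^{\fp}_*(\Lambda_{\F_p}(x), \F_p).\]
The augmentation $\epsilon_Y \co HY \to \hfp$ of $\sph$-algebras induces a map from $E'$ to the classical B\"okstedt spectral sequence for $\thh(\hfp)$, which on $E^2$ is induced by the quotient $\mathcal{A}_* \otimes \Lambda_{\F_p}(x) \to \mathcal{A}_*$ killing $x$. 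By naturality and B\"okstedt's theorem, at bidegree $(1, 2p-2)$ of $E^{\prime p-1}$ one must have
\[d^{p-1}\gamma_p(\sigma\tau_0) = c\,\sigma\xi_1 + c'\,\sigma x,\]
with $c \in \F_p^{\times}$ and $c' \in \F_p$, since the only classes in this bidegree are $\sigma\xi_1$ and $\sigma x$ (vanishing of the relevant earlier differentials is argued exactly as in Lemma \ref{lemma e2 page of the bokstedt of Y}).

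Next, the natural map $\hfp \wdg_{\sph} HY \to \hfp \wdg_{\hz} Y = \hfp \wdgz Y$ of $\hfp$-algebras induces a map of B\"okstedt spectral sequences $E' \to E$. The key computational input, supplied by the ring-structure lemmas of Section \ref{sec homology lemmas}, is the induced map $\pi_*(\hfp \wdg HY) \to \pi_*(\hfp \wdgz Y)$; this map depends sensitively on the non-formal $\hz$-algebra structure of $Y$. Carrying the differential above through $E' \to E$ yields $d^{p-1}\gamma_p(\sigma\tau_0) = \lambda\,\sigma x$ in $E$ for some $\lambda \in \F_p$.

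The main obstacle will be showing $\lambda \neq 0$, and this is precisely where the non-formality of $Y$ as an $\hz$-algebra becomes essential. For the formal DGA $Z_0$ in place of $Y$, the analogous spectral sequence must collapse at $E^2$: its abutment is computable via K\"unneth to be $\Gamma_{\F_p}(u) \otimes \Lambda_{\F_p}(\sigma x) \otimes \Gamma_{\F_p}(\varphi^2 x)$, which matches the corresponding $E^2$-page term by term in $\F_p$-rank. The fact that $Y$ is not quasi-isomorphic to $Z_0$ as DGAs forces $\lambda \neq 0$, and the homology lemmas of Section \ref{sec homology lemmas} are designed precisely to make the input ring-map computation explicit enough to detect this difference, yielding $d^{p-1}\gamma_p(\sigma\tau_0) = \sigma x$ up to a unit of $\F_p$.
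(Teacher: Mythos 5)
The paper's proof of this lemma takes a genuinely different route from yours: it argues by contradiction using Postnikov theory and $k$-invariants. The paper observes that $Y\to\hfp$ is the non-trivial type $(\F_p,2p-2)$ Postnikov extension, so its $k$-invariant $k_{2p-2}\in\textup{Der}^{2p-1}_{\hz}(\hfp,\hfp)$ is non-zero and, by the defining pullback square, becomes the trivial derivation after composing with $\psi\co Y\to\hfp$. Translating through Theorem \ref{thm aq coh to thhcoh} and Propositions \ref{prop thh coh to thh ho}, \ref{prop thhchangeofbase}, this says a certain map $\textup{THH}^{\hfp}_{2p}(\hfp\wdgz Y,\hfp)\to\textup{THH}^{\hfp}_{2p}(\hfp\wdgz\hfp,\hfp)\cong\F_p$ must be zero; but if $d^{p-1}\gamma_p(\sigma\tau_0)=0$ then $\gamma_p(\sigma\tau_0)$ persists to $E^\infty$ and its image in the collapsing target spectral sequence detects the non-triviality of that map, a contradiction. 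The advantage of this approach is that the $k$-invariant formalism supplies \emph{exactly} the non-vanishing statement that is the content of the lemma.

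Your proposal, by contrast, tries to transport the classical B\"okstedt differential through a comparison $E'\to E$, using the topological equivalence $HY\simeq HZ_0$ to identify $\pi_*(\hfp\wdg HY)\cong\mathcal{A}_*\otimes\Lambda_{\F_p}(x)$. The first half of this works: the map $E'\to E_{\textrm{classical}}$ induced by $\epsilon_Y$ does pin down $d^{p-1}\gamma_p(\sigma\tau_0)=c\,\sigma\xi_1+c'\,\sigma x$ in $E'$ with $c\in\F_p^\times$ but $c'$ undetermined. The gap is in the last step. Pushing through $E'\to E$ requires knowing the image of $\sigma\xi_1$, i.e.\ the image of $\xi_1$ under $\pi_*(\hfp\wedge HY)\to\pi_*(\hfp\wdgz Y)$; if $\xi_1\mapsto\lambda'x$, you get $d^{p-1}\gamma_p(\sigma\tau_0)=(c\lambda'+c')\sigma x$ with both $\lambda'$ and $c'$ unknown. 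You assert that non-formality of $Y$ ``forces $\lambda\neq 0$'' but give no mechanism for this conclusion; the observation that the formal case $Z_0$ collapses does not by itself constrain what happens for $Y$, since distinct $\hz$-algebras can have isomorphic $\textup{THH}$. In fact, the determination that $\sigma\xi_1$ maps to $\sigma x$ (equivalently, that the relevant coefficient is a unit) is precisely what the paper proves in Lemma \ref{lemma differentials for Y}, and that proof \emph{uses} the present lemma as input: $\eta_{p-1}(\sigma\xi_1)=d^{p-1}\gamma_p(\sigma\tau_0)=\sigma x$. So your route inverts the logical order of the paper's argument and runs into a circularity. The homology lemmas of Section \ref{sec homology lemmas} control the ring structure of $\pi_*(\hfp\wdgz Y)$ and the image of $\tau_0$, but they do not determine the image of $\xi_1$. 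To close the gap you would need an independent proof of the non-vanishing, and the $k$-invariant argument (or something with equivalent strength) appears to be exactly what is required.
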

  \begin{proof}
   We assume to the contrary that $d^{p-1} \gamma_p(\sigma \tau_0) \neq c\sigma x$ for every unit $c\in \fp$  and obtain a contradiction.  Let $\psi\co Y \to H\F_p$ denote the degree zero Postnikov section map of $Y$. This is a Postnikov extension of type $(\F_p,2p-2)$ in $H\Z$-algebras. Due to Theorem \ref{thm pstnk dugger shipley}, such Postnikov extensions are classified by:
   \begin{equation*}\label{eq hhcoh of fp over z}
  \text{Der}^{2p-1}_{H\Z}(H\F_p,H\F_p)/\aut(\fp) \cong \thh_{H\Z}^{2p}(\hfp,\F_p)/\aut(\fp)   \cong \F_p/\aut(\fp) \cong \{ [0],[1]\}
     \end{equation*}
  where the first isomorphism follows by Theorem \ref{thm aq coh to thhcoh} and the second isomorphism is shown in \cite[3.15]{dugger2007topological}. 
 Since $Y$ is not formal as a DGA, the Postnikov extension $Y \to H\F_p$ is obtained from the non-trivial element above \cite[3.15]{dugger2007topological}.
  
  Let $k_{2p-2}$ denote a representative of this non-trivial derivation. This means that we have a pull back square 
\begin{equation*} \label{eq posntikov pullback square}
 \begin{tikzcd}
 Y \arrow [r]
 \arrow[d,"\psi"]
 & \hfp
 \arrow[d,"i"]
 \\
 \hfp \arrow[r,"k_{2p-2}"]
 &
 \hfp \vee \Sigma^{2p-1}H\F_p
 \end{tikzcd}
 \end{equation*}
of $\hz$-algebras where  $i$ denotes the trivial derivation. This shows that the map $\psi \co Y\to \hfp$ pulls back the non-trivial derivation $k_{2p-2}$ to a trivial derivation. Indeed, we obtain a contradiction by showing that $k_{2p-2} \circ \psi$  represents a non-trivial derivation. 
 
 Now we formulate this statement using Hochschild homology. By the naturality of the isomorphism in Theorem \ref{thm aq coh to thhcoh}, $k_{2p-2} \circ \psi$ is nontrivial if  the map 
\[\F_p \cong \thh_{H\Z}^{2p}(\hfp,H\F_p) \to \thh_{H\Z}^{2p}(Y,H\F_p)\]
induced by $\psi$ carries $1 \in \F_p$ to a non-trivial element in $\thh_{H\Z}^{2p}(Y,H\F_p)$. Note that the isomorphism above follows by \cite[3.15]{dugger2007topological}. The natural correspondence in Proposition \ref{prop thh coh to thh ho} shows that it is also sufficient to show that the $\F_p$-dual of the nontrivial element in $\thh^{H\Z}_{2p}(\hfp,\F_p)\cong \F_p$ is carried to a non-trivial element in the $\F_p$-dual of $\thh^{H\Z}_{2p}(Y,\F_p)$. This is the case precisely when $1 \in \thh^{H\Z}_{2p}(\hfp,\F_p)\cong \F_p$ is in the image of the map $\thh^{H\Z}_{2p}(Y,\F_p) \to \thh^{H\Z}_{2p}(\hfp,\F_p)$. Due to  Proposition \ref{prop thhchangeofbase}, it is also sufficient to show that the map \begin{equation}\label{eq another map of thh}
    \text{THH}^{H\F_p}_{2p}(H\F_p \wedge_{H\Z} Y,H\F_p) \to \text{THH}^{H\F_p}_{2p}(H\F_p \wedge_{H\Z} \hfp,H\F_p)\cong \F_p
\end{equation}
induced by  $H\F_p \wedge_{H\Z} \psi$ is nontrivial, i.e.\ $1 \in \F_p$ is in the image of this map. 

  Let $\overbar{E}$ denote the B\"okstedt spectral sequence in Theorem \ref{thm bokstedtss} computing 
  \[ \thh^{\hfp}_*(\hfp \wdgz \hfp,\hfp)\]
  where $\hfp \wdgz \hfp$ acts on $\hfp$ through the multiplication map $H\F_p \wedge_{H\Z} H\F_p \xrightarrow{m} H\F_p$. Because $\pi_*(H\F_p \wedge_{H\Z} H\F_p) \cong \lambdatau$ with $\lv \tau_0 \rv = 1$, we have 
  \[\overbar{E}^2 \cong \hhfps(\lambdatau,\F_p) \cong \Gamma_{\F_p}(\sigma \tau_0).\]
   By degree reasons, all the differentials are trivial on the $\overbar{E}^2$ page and after. 
   
  It follows by  Lemma \ref{lemma homology} that the map \[\pi_*(H\F_p \wedge_{H\Z} \psi) \co \pi_*(H\F_p \wedge_{H\Z} Y) \cong \lambdatau \otimes \Lambda_{\F_p}(x) \to \pi_*(H\F_p \wedge_{H\Z} H\F_p) \cong \lambdatau\]
  is given by  $id \otimes \epsilon_{\Lambda_{\F_p}(x)}$ where $\epsilon_{\Lambda_{\F_p}(x)}$ is the augmentation map $\Lambda_{\F_p}(x) \to \F_p$ and $id$ denotes the identity map of $\lambdafp(\tau_0)$.
  There is an induced map of spectral sequences $\psi_* \co E^* \to \overbar{E}^*$; the second page of  $E$ is given in Lemma \ref{lemma e2 page of the bokstedt of Y}. It follows by the naturality of the tensor splitting in Proposition \ref{prop tensorsplitting} that the map 
  \[\psi_2 \co E^2\cong \Gamma_{\F_p}(\sigma \tau_0) \otimes \Lambda_{\F_p}(\sigma x) \otimes \Gamma_{\F_p}(\varphi^2x) \to \overbar{E}^2\cong \Gamma_{\F_p}(\sigma \tau_0)\]
  is given by the identity map  on $\Gamma_{\F_p}(\sigma \tau_0)  \subset E^2$ and it is 
  trivial on the other positive-dimensional generators. In particular, we have 
  \[\psi_2(\gamma_p(\tau_0)) = \gamma_p(\tau_0).\]
  
   Since it is the only non-trivial class in total degree $2p$, the element $\gamma_p(\sigma \tau_0) \in \overbar{E}^2$ represents a non-trivial element on the right hand side of \eqref{eq another map of thh}. Therefore in order to show that the map in \eqref{eq another map of thh} is non-trivial, i.e.\ in order to obtain the contradiction we need, it is sufficient to show that $\gamma_p(\sigma \tau_0) \in E^2$ survives to the $E^\infty$ page because  $\psi_2(\gamma_p(\sigma \tau_0)) = \gamma_p(\sigma \tau_0)$. In other words, it is sufficient   to show that $d^r\gamma_p(\sigma \tau_0)= 0$ for every $r\geq2$.
  
  Due to Lemma \ref{lemma e2 page of the bokstedt of Y}, $d^r\gamma_p(\sigma \tau_0) = 0$ for $1<r<p-1$ in $E$. Since $\gamma_p(\sigma \tau_0)$ is in homological degree $p$, $d^r\gamma_p(\sigma \tau_0)= 0$ for $r>p-1$. Therefore the only possible non-trivial differential on $\gamma_p(\sigma \tau_0)$ is $d^{p-1}$. We have \[\text{deg}(d^{p-1}\gamma_p(\sigma \tau_0)) = (p-(p-1),p+p-1-1) = (1,2p-2).\]
  The only non-trivial class of $E^{p-1}$ in this degree is indeed $\sigma x$ but we assumed that $d^{p-1} \gamma_p(\sigma \tau_0) \neq c\sigma x$ for every unit $c$ and therefore $d^{p-1}\gamma_p(\sigma \tau_0) = 0$. This shows that $\gamma_p(\sigma \tau_0)$ survives to the $E^\infty$ page. This provides the contradiction we need since $\gamma_p(\sigma \tau_0)$ hits the element $\gamma_p(\sigma \tau_0) \in \overbar{E}^2$  which represents a non-trivial element on the right hand side of \eqref{eq another map of thh}.

 \end{proof}

 We obtain more non-trivial differentials on $E^{p-1}$  by using the  nontrivial differentials in the B\"okstedt  spectral sequence  computing 
 \[\text{THH}^{H\F_p}(H\F_p \wedge H\F_p, H\F_p \wedge H\F_p) \cong H\F_p \wedge \text{THH}^{\Sp}(H\F_p,H\F_p)\]
 where $\hfp \wdg \hfp$ acts on $\hfp \wdg \hfp$ via the identity map $H\F_p \wedge H\F_p \xrightarrow{} H\fp \wedge H\fp$. Let $\hat{E}$ denote this spectral sequence; we start by describing $\hat{E}$. 
 
 The calculation of $\text{THH}^{\Sp}(H\F_p,H\F_p)$ and the description of the differentials in this spectral sequence is due to B\"okstedt \cite{bokstedt1985topological}. In \cite{hunter1996thhss}, Hunter provides a new proof of these differentials in a modern setting of spectra.
 
 Note that $\pi_*(H\F_p \wedge H\F_p) \cong \dsa$ where $\dsa$  denotes the dual Steenrod algebra. The dual Steenrod algebra is the following free graded commutative $\F_p$-algebra 
 \[ \mathcal{A}_* = \F_p[\xi_r \ \lvert \ 
 r\geq 1] \otimes \Lambda_{\F_p}(\tau_s \ \lvert \  s \geq 0)\]
where $\lvert \xi_r \rvert =  2(p^r-1)$ and $\lvert \tau_s \rvert =  2p^s-1$ \cite{milnor1958steenrod}. It follows by Proposition \ref{prop tor of freecommutative} that the $\hat{E}^2$ page of this B\"okstedt spectral sequence is given by 
\begin{equation}\label{eq another eq e2hat}
    \hat{E}^2 \cong \hhfps(\dsa,\dsa) \cong \dsa \otimes \Lambda_{\F_p}[\sigma \xi_r \ \lvert \ 
 r\geq 1] \otimes \Gamma_{\F_p}(\sigma \tau_s \ \lvert \  s \geq 0). 
\end{equation}
For this spectral sequence,  $\hat{d}^r=0$ for $2\leq r<p-1$ and $\hat{d}^{p-1} \gamma_k (\sigma \tau_i) =  \gamma_{k-p}( \sigma \tau_i)\sigma \xi_{i+1}$ for $k \geq p$ \cite[Theorem 1]{hunter1996thhss}. This is a spectral sequence of $\dsa$-algebras \cite[\rom{9}.2.8]{elmendorf2007rings}. Together with this $\dsa$-algebra structure, the formula for $\hat{d}^{p-1}$ determines all the non-trivial differentials on $\hat{E}^{p-1}$. As shown in \cite{bokstedt1985topological},  $\hat{d}^r=0$ for $r>p-1$.

Let $\Sp\cof \overbar{H\F}_p \trfib H\F_p$ denote a q-cofibrant replacement in associative $\Sp$-algebras. Note that by  \cite[\rom{7}.6.5 and \rom{7}.6.7]{elmendorf2007rings}, $H\F_p \wedge \overbar{H\F}_p$ represents the derived smash product. Therefore, there is a weak equivalence 
\[\hfp \wdg \overbar{H\F}_p \we \hfp \wdg \hfp\]
of $\hfp$-algebras. Through this equivalence, one obtains an isomorphism of spectral sequences between $\hat{E}$ and the B\"okstedt spectral sequence corresponding to the identity map $\hfp \wdg \overbar{H\F}_p \to \hfp \wdg \overbar{H\F}_p$. From now on, we abuse our previous notation and let $\hat{E}$ denote the latter spectral sequence.

To simplify our discussion, we use the B\"okstedt spectral sequence computing   \[\text{THH}^{H\F_p}(H\F_p \wedge \obhfp,H\F_p)\] where $H\F_p \wedge \obhfp$ acts on $\hfp$ through a given map \[\rho \co \hfp \wdg \obhfp \to \hfp\] of $\hfp$-algebras. Note that for the moment, we only assume that $\rho$ is a map of $\hfp$-algebras; later, we will specify this map. Let $\tilde{E}$ denote the  B\"okstedt spectral sequence corresponding to $\rho$. We have: 
\[\tilde{E}^2 \cong \hhfps(\dsa,\F_p) \cong \Lambda_{\F_p}[\sigma \xi_r \ \lvert \ 
 r\geq 1] \otimes \Gamma_{\F_p}(\sigma \tau_s \ \lvert \  s \geq 0),  \]
 where the second isomorphism follows by Proposition \ref{prop tor of freecommutative}.
 In order to deduce the differentials on this spectral sequence, we use the map 
 \[\text{THH}^{H\F_p}( H\F_p \wedge \obhfp,H\F_p \wedge \obhfp) \to \text{THH}^{H\F_p}(H\F_p \wedge \obhfp,H\F_p)\] 
 induced by the map $\rho$. This induces a map of spectral sequences $\hat{E} \to \tilde{E}$. On the second page, this map is given by $\epsilon_{\dsa} \otimes id$ where $\epsilon_{\dsa}$ denotes the augmentation map of $\dsa$ on the first factor of $\hat{E}^2$ in \eqref{eq another eq e2hat} and $id$ denotes the identity map of the rest of the factors of $\hat{E}^2$; this follows by Proposition \ref{prop tor of freecommutative}. The map of spectral sequences $\hat{E} \to \tilde{E}$ determines the differentials of $\tilde{E}$. We obtain the following lemma. 
\begin{lemma}\label{lem boksted ss for hfp}
 As above, let $\tilde{E}$ denote the B\"okstedt spectral sequence corresponding to a map $\rho \co \hfp \wdg \obhfp \to \hfp$ of $\hfp$-algebras. We  have:
\[\tilde{E}^2 \cong \hhfps(\dsa,\F_p) \cong \Lambda_{\F_p}[\sigma \xi_r \ \lvert \ 
 r\geq 1] \otimes \Gamma_{\F_p}(\sigma \tau_s \ \lvert \  s \geq 0),  \] $\tilde{d}^r = 0$ for $2\leq r<p-1$, \[\tilde{d}^{p-1} \gamma_k (\sigma \tau_i) = \gamma_{k-p}( \sigma \tau_i)\sigma \xi_{i+1} \]
for $k\geq p$ and $\tilde{d}^r=0$ for $r\geq p$.
\end{lemma}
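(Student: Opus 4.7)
\emph{Plan.} The argument rests on naturality of the Bökstedt spectral sequence (Theorem \ref{thm bokstedtss}) in the arrow category $\arrowcat$. I view $\rho$ as a morphism in $\arrowcat$ from the identity of $\hfp \wdg \obhfp$ to $\rho$ itself (via the square that is the identity on the left and $\rho$ on the right), obtaining an induced map of spectral sequences $\pi \co \hat{E} \to \tilde{E}$. My first step is to identify $\pi_2$. Because $\dsa = \fp[\xi_r]\otimes \Lambda_{\fp}(\tau_s)$ has all generators in strictly positive degrees, the only map of graded $\fp$-algebras $\dsa \to \fp$ is the augmentation, so $\pi_*\rho = \epsilon_{\dsa}$. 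Feeding the standard tensor splitting of $\dsa$ through the naturality clause of Proposition \ref{prop tensorsplitting} identifies $\pi_2$ with $\epsilon_{\dsa} \otimes \text{id}$ on $\hat{E}^2 \cong \dsa \otimes \Lambda_{\fp}(\sigma\xi_r)\otimes\Gamma_{\fp}(\sigma\tau_s)$; in particular $\pi_2$ is a split surjection.

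Granted this, I would prove $\tilde{d}^r = 0$ for $2 \leq r < p-1$ by induction on $r$: Hunter's theorem gives $\hat{d}^r = 0$, so the naturality relation $\tilde{d}^r \pi_r = \pi_r \hat{d}^r$ combined with surjectivity of $\pi_r$ forces $\tilde{d}^r = 0$; both differentials then being zero preserves surjectivity of $\pi_{r+1}$. At $r = p-1$, applying the same naturality diagram to a lift of $\gamma_k(\sigma\tau_i) \in \tilde{E}^{p-1}$ (the element of the same name in $\hat{E}^{p-1}$) together with Hunter's formula $\hat{d}^{p-1}\gamma_k(\sigma\tau_i) = \gamma_{k-p}(\sigma\tau_i)\sigma\xi_{i+1}$ yields the desired $\tilde{d}^{p-1}\gamma_k(\sigma\tau_i) = \gamma_{k-p}(\sigma\tau_i)\sigma\xi_{i+1}$ for $k \geq p$.

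The main obstacle will be establishing $\tilde{d}^r = 0$ for $r \geq p$, since surjectivity of $\pi_r$ on later pages is not automatic once nonzero differentials appear. My plan is to upgrade the $E^{p-1}$ identification to an isomorphism of differential graded modules
\[\hat{E}^{p-1} \cong \dsa \otimes \tilde{E}^{p-1},\]
where $\dsa$ sits in horizontal degree $0$ (hence carries the trivial differential) and the second factor carries precisely $\tilde{d}^{p-1}$. This is justified by the observation that $\hat{d}^{p-1}$ is a $\dsa$-linear derivation, vanishes on each $\sigma\xi_r$ by bidegree, and sends $\gamma_k(\sigma\tau_i)$ into the subring $\Lambda_{\fp}(\sigma\xi_r)\otimes\Gamma_{\fp}(\sigma\tau_s)$, so this subring is $\hat{d}^{p-1}$-stable. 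Taking homology then yields $\hat{E}^p \cong \dsa \otimes \tilde{E}^p$ with $\pi_p$ still equal to $\epsilon_{\dsa}\otimes \text{id}$, in particular surjective. Since $\hat{d}^r = 0$ for $r \geq p$ by Bökstedt's collapse result, naturality forces $\tilde{d}^r = 0$ for all $r \geq p$, completing the proof.
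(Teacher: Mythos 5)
Your proof is correct and follows the same comparison strategy as the paper: transport the Hunter/B\"okstedt differentials of $\hat{E}$ along the map of spectral sequences $\hat{E} \to \tilde{E}$ induced by $\rho$, having identified $\pi_2$ with $\epsilon_{\dsa}\otimes \textup{id}$. The paper asserts tersely that ``the map of spectral sequences $\hat{E}\to\tilde{E}$ determines the differentials of $\tilde{E}$''; you have usefully spelled out the page-by-page surjectivity of $\pi_r$ (including the flatness argument giving $\hat{E}^{p}\cong\dsa\otimes\tilde{E}^{p}$) that justifies this assertion.
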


The following lemma allows us to compare the spectral sequences $\tilde{E}$ and $E$. 
 
 \begin{lemma} \label{lemma map of underlying things}
Let $\overbar{H\F}_p$ be as above. There exists a map $\eta \co H\F_p \wedge \overbar{H\F}_p \to H\F_p \wedge_{H\Z} Y$ of $H\F_p$-algebras that satisfies the following properties.
 \begin{enumerate}
     \item We have $\pis(\eta)(\tau_0) = \tau_0$ (up to a unit of $\fp$).
     \item There is a map of $\hfp$-algebras $\rho \co \hfp \wdg \overbar{H\F}_p \to \hfp$ that render the map $\eta$ to a map of $\hfp$-algebras over $\hfp$ with respect to the $\hfp$-algebra structure of \eqref{eq augmentation map of fp wdgz Y}.
     \item The induced map of spectral sequences $\eta_* \co \tilde{E}^* \to E^*$  carries $\Gamma_{\F_p} (\sigma \tau_0)\subset \tilde{E}^2$   isomorphically  to $\Gamma_{\F_p} (\sigma \tau_0)\subset E^2$
 \end{enumerate}
 \end{lemma}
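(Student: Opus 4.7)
My approach is to construct $\eta$ by exploiting the topological formality of $Y$ recorded in Example \ref{ex duggershipley}. Let $Y^{\textup{form}}$ denote the formal DGA with homology $\Lambda_{\F_p}(x)$, viewed as an $\hfp$-algebra with square-zero multiplication, so $Y^{\textup{form}} \simeq \hfp \vee \Sigma^{2p-2}\hfp$ as $\hfp$-modules. Topological formality provides a zigzag of $\sph$-algebra weak equivalences $Y \simeq Y^{\textup{form}}$, and the $\hfp$-algebra unit $u\co \hfp \to Y^{\textup{form}}$ composed with this zigzag determines a morphism $\hfp \to Y$ in the homotopy category of $\sph$-algebras. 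Using q-cofibrancy of $\obhfp$ and q-fibrancy of $Y$, this morphism is realised by an honest $\sph$-algebra map $f \co \obhfp \to Y$. I will then define
\[
\eta \co \hfp \wdg \obhfp \xrightarrow{\textup{id} \wdg f} \hfp \wdg Y \to \hfp \wdg_{\hz} Y,
\]
where the second arrow is the natural $\hfp$-algebra base-change map (valid since $Y$ is an $\hz$-algebra); both arrows are $\hfp$-algebra maps. Correspondingly I take $\rho\co \hfp \wdg \obhfp \to \hfp$ to be the multiplication $\hfp \wdg \obhfp \we \hfp \wdg \hfp \to \hfp$.

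For property (1), I would compute $\pi_*(\eta)$ in stages. Smashing the $\sph$-algebra equivalence $Y \simeq Y^{\textup{form}}$ with $\hfp$ yields $\hfp \wdg Y \simeq (\hfp \wdg \hfp) \vee \Sigma^{2p-2}(\hfp \wdg \hfp)$ as $\sph$-spectra, under which $\textup{id}\wdg f$ becomes the inclusion of the first wedge factor. This is an isomorphism on $\pi_1$ sending $\tau_0 \mapsto \tau_0$. The base-change map $\hfp \wdg Y \to \hfp \wdg_{\hz} Y$ fits into a commuting square with the map $\hfp \wdg \hfp \to \hfp \wdg_{\hz} \hfp$ via $\epsilon_Y$; since $\epsilon_Y$ is $(2p-2)$-connected, the vertical maps in this square are $\pi_1$-isomorphisms, and the bottom map sends $\tau_0$ to $\tau_0$. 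Composing, $\pi_*(\eta)(\tau_0) = \tau_0$ up to a unit of $\F_p$.

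For property (2), the crucial input is that the topological equivalence $Y \simeq Y^{\textup{form}}$ respects the degree-zero Postnikov section maps $\epsilon_Y$ and $\epsilon_{Y^{\textup{form}}}$ (each is determined up to homotopy by the identity on $\pi_0$), and $\epsilon_{Y^{\textup{form}}}$ is a retraction of $u$. Hence the composite $\obhfp \xrightarrow{f} Y \xrightarrow{\epsilon_Y} \hfp$ is homotopic to the chosen weak equivalence $\obhfp \we \hfp$, so after applying $\hfp \wdg -$ and composing with the multiplication $\hfp \wdg \hfp \to \hfp$ one recovers $\rho$; naturality of the base-change map then shows $\eta$ is a map over $\hfp$ with respect to $\rho$ and the augmentation of \eqref{eq augmentation map of fp wdgz Y}. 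Property (3) then follows from (1), the functoriality of the B\"okstedt spectral sequence (Theorem \ref{thm bokstedtss}), and the naturality of the tensor splitting of Hochschild homology (Proposition \ref{prop tensorsplitting}): both $\tilde{E}^2$ and $E^2$ split along their $\Lambda_{\F_p}(\tau_0)$ tensor factors of $\pi_*(\hfp \wdg \obhfp) \cong \mathcal{A}_*$ and $\pi_*(\hfp \wdg_{\hz} Y) \cong \Lambda_{\F_p}(\tau_0) \otimes \Lambda_{\F_p}(x)$, the map $\eta_*$ is compatible with these splittings, and by (1) it restricts to the identity (up to a unit) on the $\Lambda_{\F_p}(\tau_0)$ factor; hence the induced map $\Gamma_{\F_p}(\sigma\tau_0) \to \Gamma_{\F_p}(\sigma\tau_0)$ is an isomorphism. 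The main technical obstacle is ensuring $f$ can be chosen so as to be homotopy-compatible with the Postnikov sections (needed in property (2)); this reduces to a standard model-categorical lifting argument along Postnikov fibrations.
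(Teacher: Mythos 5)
Your construction of $\eta$ --- realise the $\hfp$-algebra unit of the formal model through the topological equivalence of Example \ref{ex duggershipley}, then smash with $\hfp$ and apply the base-change map --- is essentially the paper's approach, and your argument for property (1) via connectivity of $\epsilon_Y$ is a sound variant of the paper's diagram-chase. There are, however, two places where your argument needs repair.

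For property (2) you are proving more than the lemma asks, and paying for it. The lemma only asserts the existence of \emph{some} $\hfp$-algebra map $\rho$ rendering $\eta$ a map over $\hfp$, and Lemma \ref{lem boksted ss for hfp} is stated for an arbitrary such $\rho$. One can therefore simply \emph{define} $\rho$ to be the composite
\begin{equation*}
\rho \co \hfp \wdg \obhfp \xrightarrow{\;\eta\;} \hfp \wdgz Y \xrightarrow{\;\textup{id}\wdgz\epsilon_Y\;} \hfp \wdgz \hfp \xrightarrow{\;m\;} \hfp,
\end{equation*}
which makes (2) tautological. Insisting that $\rho$ agree with the multiplication $\hfp\wdg\obhfp \to \hfp\wdg\hfp \to \hfp$ forces exactly the ``main technical obstacle'' you flag --- homotopy-compatibility of $f$ with the Postnikov $0$-sections --- and this compatibility is simply not needed anywhere in the argument.

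For property (3), the claim that ``the map $\eta_*$ is compatible with these splittings'' does not follow from (1). Naturality in Proposition \ref{prop tensorsplitting} requires the ring map $\pi_*(\eta) \co \dsa \to \lambdatau \ot \lambdafp(x)$ to split as a tensor product $f_1 \ot f_2$ with $f_1 \co \lambdatau \to \lambdatau$ and $f_2$ on the complementary factors, and this is not automatic: for instance $\pi_*(\eta)(\tau_1)$, which has degree $2p-1$, could equal $\tau_0 x$ rather than land in $1 \ot \lambdafp(x)$, in which case $\pi_*(\eta)$ is not a tensor product of maps. The correct repair, and what the paper does, is to precompose with the canonical inclusion $i \co \lambdatau \hookrightarrow \dsa$, which \emph{does} split as $\textup{id}_{\lambdatau}\ot u$, and observe that $\pi_*(\eta)\circ i \co \lambdatau \to \lambdatau \ot \lambdafp(x)$ is again the canonical inclusion because $\pi_*(\eta)(\tau_0)=\tau_0$. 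Applying the naturality of Proposition \ref{prop tensorsplitting} to $i$ and to $\pi_*(\eta)\circ i$ then identifies both the map $\Gamma_{\fp}(\sigma\tau_0)\to\tilde{E}^2$ and the composite $\Gamma_{\fp}(\sigma\tau_0)\to E^2$ with canonical inclusions, which yields (3).
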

 
 \begin{proof}
  Let $Z$ denote the $H\z$-algebra corresponding to the the formal DGA with homology $\Lambda_{\F_p}[x]$. 
 Because the DGAs corresponding to $Y$ and  $Z$ are topologically equivalent, $Y$ and $Z$ are weakly equivalent as $\Sp$-algebras, see Example \ref{ex duggershipley}.  Let $\Sp \cof cZ \trfib Z$ denote a q-cofibrant replacement of $Z$ in $\Sp$-algebras. Since $Y$ is q-fibrant as an $\Sp$-algebra, there is a weak equivalence $\nu \co cZ \we Y$ of $\Sp$-algebras.
 
 Since there is a map of DGAs from $\fp$ to the formal DGA with homology $\lambdafp[x]$, there is a map $\hfp \to Z$ in the homotopy category of $\hz$-algebras. This forgets to a map $\hfp \to Z$ in the homotopy category of $\sph$-algebras. In turn, we obtain a map  $\phi\co \overbar{H\F}_p \to cZ$ of  $\sph$-algebras as $cZ$ is q-fibrant and weakly equivalent to $Z$ and $\overbar{H\F}_p$ is q-cofibrant. 

This gives us a map $\nu \circ \phi \co \overbar{H\F}_p \to Y$. From this, we obtain the desired map $\eta$ through the following composite.
\[\eta \co H\F_p \wedge \overbar{H\F}_p \xrightarrow{H\F_p \wedge (\nu \circ \phi)} H\F_p \wedge Y \to H\F_p \wedge_{H\Z} Y\]
Note that $\hfp \wdg Y$ may not be representing the derived smash product but this does not cause a problem.

Now we prove our claim $\pis(\eta)(\tau_0) = \tau_0$.  We have the following commuting diagram. 

\begin{equation*}
    \begin{tikzcd}[row sep=normal, column sep = large]
    \hfp \wdg \overbar{H\F}_p \ar[r,"\hfp \wdg (\nu \circ \phi)"]&\hfp \wdg Y  \ar[d] \ar[r,"\hfp \wdg \epsilon_Y"] &\hfp \wdg \hfp\ar[d] \\
    & \hfp \wdgz Y \ar[r,"\hfp \wdgz \epsilon_Y"]& \hfp \wdgz \hfp 
    \end{tikzcd}
\end{equation*}
The composite of the top horizontal arrows is a weak equivalence since it is given by the map $\hfp \wdg (\epsilon_Y \circ \nu \circ \phi)$ where the $\sph$-algebra map $\epsilon_Y \circ \nu \circ \phi\co \overbar{H\F}_p \to \hfp$ has to be a weak equivalence because it carries the unit to the unit in homotopy. In degree $1$, $\mathcal{A}_*$ is generated by $\tau_0$ as an $\fp$-module. Therefore, the composite of the top horizontal arrows carries $\tau_0$ to $\tau_0$ up to a unit. At the level of homotopy groups, the right  hand vertical map is the graded ring map $\dsa \to \lambdatau$ that carries $\tau_0$ to $\tau_0$. This shows that the $\tau_0$ on the top left corner is carried to the $\tau_0$ on the bottom right corner. Due to Lemma \ref{lemma homology}, we have 
\[\pis(\hfp \wdgz Y) \cong \lambdatau \otimes \lambdafp(x).\]
In particular, $\pi_1(\hfp \wdgz Y)$ is generated by $\tau_0$ as an $\fp$-module. Since the $\tau_0$ on the upper left corner travels to a non-trivial element on the bottom right corner, we deduce that it hits  $\tau_0$ up to a unit in $\pis(\hfp \wdgz Y)$. In other words,  $ \pis(\eta)(\tau_0) = \tau_0$ up to a unit of $\fp$ as desired. This finishes the proof of the first item of the lemma. 

We define the map $\rho$ through the composite:
  \[\rho \co H\F_p \wedge \overbar{H\F}_p \xrightarrow{\eta} H\F_p \wedge_{H\Z} Y \xrightarrow{id \wedge_{H\Z} \epsilon_Y} H\F_p \wedge_{H\Z} H\F_p \xrightarrow{m} H\F_p,\]
  where $m$ denotes the multiplication map of $H\F_p$ and $id$ denotes the identity map of $\hfp$. It is clear that $\rho$ is a map of $\hfp$-algebras. Since the map $\hfp \wdgz Y \to \hfp$ is  the composite of the last two maps above, $\eta$ is a map of $\hfp$-algebras over $\hfp$ as desired.

Now we prove the last item in the lemma. The canonical inclusion  $i \co \lambdatau \to \dsa$ induces the following composite
 \begin{equation*}
 \begin{split}
     &\hhfps(\lambdatau,\F_p) \to \hhfps(\dsa,\F_p)\cong \tilde{E}^2 \\
     &\xrightarrow{\eta_2} \hhfps(\lambdatau \otimes \Lambda_{\F_p}(x), \F_p) \cong E^2
     \end{split}
 \end{equation*}
 which is given by 
  \begin{equation*}
     \begin{split}
 \Gamma_{\F_p}(\sigma \tau_0)&\to \Lambda_{\F_p}[\sigma \xi_r \ \lvert \ 
 r\geq 1] \otimes \Gamma_{\F_p}(\sigma \tau_s \ \lvert \  s \geq 0)\\  
 &\xrightarrow{\eta_2} \Gamma_{\F_p}(\sigma \tau_0) \otimes \Lambda_{\F_p}(\sigma x) \otimes \Gamma_{\F_p}(\varphi^2x)
     \end{split}
\end{equation*}
due to Lemmas \ref{lemma e2 page of the bokstedt of Y} and \ref{lem boksted ss for hfp}. The first map in this composite is the canonical inclusion by the naturality of the tensor splitting in Proposition \ref{prop tensorsplitting}. Therefore, it is sufficient to show that the composite map is the canonical inclusion. Note that the composite map is induced by the map of rings $ \pis(\eta) \circ i$. This map is the canonical inclusion $\lambdatau \to \lambdatau \otimes \Lambda_{\F_p} (x)$ since $ \pis(\eta) (\tau_0) = \tau_0$ (up to a unit). Using this, in combination with the naturality of the tensor splitting in Proposition \ref{prop tensorsplitting}, we deduce that the composite is the canonical inclusion. This gives the desired result as the first map is also the canonical inclusion.

 \end{proof}

The following lemma provides all the non-trivial differentials of $E$ that we need to know.
\begin{remark}
Since we do not assume that $Y$ is an $E_n$ $\hz$-algebra  for $n>1$, we also do no assume that the B\"okstedt spectral sequence $E$ is multiplicative. Nevertheless, to simplify notation, we sometimes denote the elements of the form $x \otimes y$ as $xy$ in the tensor splitting provided by Proposition \ref{prop tensorsplitting}. This makes sense since the second page of the B\"okstedt spectral sequences we consider have canonical ring structures (omitting the differentials) and the tensor splittings provided by Proposition \ref{prop tensorsplitting} respect these ring structures.
\end{remark}
\begin{lemma}\label{lemma differentials for Y}
 In the spectral sequence  $E$ of Lemma \ref{lemma e2 page of the bokstedt of Y}, we have
\[d^{p-1}\gamma_k(\sigma \tau_0)= \gamma_{k-p}(\sigma \tau_0)\sigma x\]
for every $k \geq p$. 
\end{lemma}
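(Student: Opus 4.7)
The plan is to transport the nontrivial differential in the spectral sequence $\tilde{E}$ (which is known by Lemma \ref{lem boksted ss for hfp}) across to $E$ via the map of spectral sequences $\eta_* \co \tilde{E}^* \to E^*$ induced by the $\hfp$-algebra map $\eta$ constructed in Lemma \ref{lemma map of underlying things}. This is the same strategy used to extract a single differential in Lemma \ref{lem first nontrivial differential}, but now I would apply it systematically across divided powers and track products.

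Concretely, I would first specialize Lemma \ref{lem boksted ss for hfp} to $i=0$, obtaining $\tilde{d}^{p-1}\gamma_k(\sigma\tau_0) = \gamma_{k-p}(\sigma\tau_0)\,\sigma\xi_1$ for $k\ge p$. Naturality of differentials under $\eta_*$ then gives
\[
d^{p-1}\bigl(\eta_{p-1}\gamma_k(\sigma\tau_0)\bigr) = \eta_{p-1}\bigl(\gamma_{k-p}(\sigma\tau_0)\,\sigma\xi_1\bigr).
\]
By Lemma \ref{lemma map of underlying things}(3), $\eta_2$ (and therefore $\eta_{p-1}$, since all the relevant classes survive) acts as the identity on $\Gamma_{\F_p}(\sigma\tau_0)$. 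Moreover, since $\pi_*(\eta)$ is a map of graded commutative $\F_p$-algebras and the tensor splitting of Proposition \ref{prop tensorsplitting} is natural with respect to such maps, $\eta_2$ respects the ring structure on the Hochschild $E^2$-pages. Hence
\[
\eta_2\bigl(\gamma_{k-p}(\sigma\tau_0)\,\sigma\xi_1\bigr) = \gamma_{k-p}(\sigma\tau_0)\cdot\eta_2(\sigma\xi_1).
\]

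Next I would identify $\eta_2(\sigma\xi_1)$. Because $\sigma\xi_1$ has bidegree $(1, 2p-2)$ and the only nonzero class of this bidegree in $E^2$ is $\sigma x$, we must have $\eta_2(\sigma\xi_1) = a\,\sigma x$ for some $a \in \F_p$. Both $\sigma\xi_1$ and $\sigma x$ survive to the $(p-1)$-page by Lemmas \ref{lem boksted ss for hfp} and \ref{lemma e2 page of the bokstedt of Y} (they live on the bottom lines, so degree considerations rule out incoming and outgoing earlier differentials), so this identification persists to $E^{p-1}$. Specializing the naturality equation to $k = p$ gives $d^{p-1}\gamma_p(\sigma\tau_0) = a\,\sigma x$, and comparison with Lemma \ref{lem first nontrivial differential} forces $a$ to be a unit of $\F_p$. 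Up to rescaling $\sigma x$, the claim then follows: $d^{p-1}\gamma_k(\sigma\tau_0) = \gamma_{k-p}(\sigma\tau_0)\,\sigma x$ for all $k\ge p$.

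The main subtlety I anticipate is the multiplicativity step. The spectral sequence $E$ is not known to be multiplicative since $Y$ is only assumed to be an associative $H\Z$-algebra, and the authors flag precisely this issue in the remark preceding the lemma. However, I only need the ring structure on the $E^2$-page coming from Hochschild homology of the graded commutative $\F_p$-algebra $\pi_*(\hfp \wdgz Y) \cong \Lambda_{\F_p}(\tau_0)\otimes\Lambda_{\F_p}(x)$ together with the fact that $\eta_2$ is a ring homomorphism on $E^2$-pages. Both facts are ensured by the naturality statement in Proposition \ref{prop tensorsplitting} applied to the graded commutative $\F_p$-algebra map $\pi_*(\eta)$, so no genuine multiplicative spectral-sequence structure on $E$ beyond page two is needed.
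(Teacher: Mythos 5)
Your proposal is correct and follows essentially the same route as the paper's proof: transporting the differential from $\tilde{E}$ along the map of spectral sequences $\eta_*$, using Lemma \ref{lemma map of underlying things} to control $\eta_*$ on $\Gamma_{\F_p}(\sigma\tau_0)$, Lemma \ref{lem first nontrivial differential} to pin down $\eta_2(\sigma\xi_1)$ up to a unit, the Hochschild ring structure on the $E^2$-pages together with naturality of the tensor splitting to handle products without a multiplicative structure on the spectral sequence itself, and the survival of the relevant classes to the $(p-1)$-page to transfer the product identity from $\eta_2$ to $\eta_{p-1}$. The paper determines $\eta_{p-1}(\sigma\xi_1)=\sigma x$ directly from the chain $\eta_{p-1}(\tilde d^{p-1}\gamma_p(\sigma\tau_0))=d^{p-1}\gamma_p(\sigma\tau_0)=\sigma x$, whereas you first isolate $\eta_2(\sigma\xi_1)=a\,\sigma x$ by bidegree and then identify the unit $a$ from Lemma \ref{lem first nontrivial differential}; this is a cosmetic reordering, not a different argument.
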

 \begin{proof}

  By Lemma \ref{lemma map of underlying things}, there is a map 
  \begin{equation*} \label{eq mapfpwedgefptofpwedgex}
  \eta \co H\F_p \wedge \overbar{H\F}_p \to H\F_p \wedge_{H\Z} Y
  \end{equation*}
  of $H\F_p$-algebras satisfying the properties listed in Lemma \ref{lemma map of underlying things}. See Lemma \ref{lem boksted ss for hfp} for a description of the B\"okstedt spectral sequence $\tilde{E}$ computing $\thh^{\hfp}(\hfp \wdg \overbar{H\F}_p,\hfp)$.
  
  The map $\eta$ induces a map of spectral sequences $\eta_* \co \tilde{E}^* \to E^*$. By Lemma \ref{lemma e2 page of the bokstedt of Y}, the non-trivial elements of $\Gamma_{\F_p}(\sigma \tau_0)  \otimes \Lambda_{\F_p}(\sigma x) \subset E^2$ survive non-trivially to $E^{p-1}$. Similarly, the non-trivial elements of $\Gamma_{\F_p}(\sigma \tau_0)  \otimes \Lambda_{\F_p}(\sigma \xi_1) \subset \tilde{E}^2$ survive non-trivially to $\tilde{E}^{p-1}$ due to Lemma \ref{lem boksted ss for hfp}. Combining this with Lemma \ref{lemma map of underlying things}, we obtain that $\eta_{p-1}$   carries $\Gamma_{\F_p}(\sigma \tau_0) \subset \tilde{E}^{p-1}$ isomorphically to $\Gamma_{\F_p}(\sigma \tau_0) \subset E^{p-1}$. This provides the third equality in:  
  \begin{equation*} \label{eq psi nontrivial}
  \eta_{p-1}(\sigma \xi_1) = \eta_{p-1}(\tilde{d}^{p-1} \gamma_p(\sigma \tau_0)) = d^{p-1} \eta_{p-1}(\gamma_p(\sigma \tau_0)) = d^{p-1} \gamma_p(\sigma \tau_0) = \sigma x,
  \end{equation*}
  where the first equality follows by Lemma \ref{lem boksted ss for hfp}, the second equality follows by the fact that $\eta_*$ is a map of spectral sequences and the last equality follows by Lemma \ref{lem first nontrivial differential}. This implies that \[\eta_2(\sigma \xi_1) = \sigma x.\]
  By Lemma \ref{lemma map of underlying things}, we have $\eta_2(\gamma_k(\sigma \tau_0)) = \gamma_k(\sigma \tau_0)$. Since $\eta_2$ is a map of rings, we deduce that 
  \[\eta_{2}( \gamma_k(\sigma \tau_0)\sigma \xi_1 ) = \gamma_k(\sigma \tau_0) \sigma x.\]
  Note that for the previous statement, we did not assume that the spectral sequences $E$ and $\tilde{E}$ are multiplicative spectral sequences. We only use the fact that  $E^2_{*,*}$ and $\tilde{E}^2_{*,*}$ have  ring structures and  $\eta_2$ is a ring map.
  By the triviality of the relevant differentials on $E$ and $\tilde{E}$, we deduce the following.
  \begin{equation}\label{eq anoth eq proof of lemma on nontrivial diff for Y}
      \eta_{p-1}( \gamma_k(\sigma \tau_0)\sigma \xi_1 ) = \gamma_k(\sigma \tau_0) \sigma x
  \end{equation}

We are now ready to deduce the non-trivial differentials claimed in the lemma. We have the following for every $k \geq p$.
\begin{equation*}
    \begin{split}
        d^{p-1}\gamma_k(\sigma \tau_0) &= d^{p-1} \eta_{p-1}(\gamma_k(\sigma \tau_0))\\
        &= \eta_{p-1}(\tilde{d}^{p-1}\gamma_k(\sigma \tau_0))\\ 
        &= \eta_{p-1}(\gamma_{k-p}(\sigma \tau_0) \sigma \xi_1)\\ &= \gamma_{k-p}(\sigma \tau_0) \sigma x
    \end{split}
\end{equation*}
Here, the first equality follows by Lemma \ref{lemma map of underlying things},  the second equality follows by the fact that $\eta_*$ is a map of spectral sequences, the third equality follows by Lemma \ref{lem boksted ss for hfp} and the last equality follows by \eqref{eq anoth eq proof of lemma on nontrivial diff for Y}. 
  \end{proof}
  The following is a picture of a part of the $E^{p-1}$ page. Note that the horizontal axis denotes the homological degree and the vertical axis denotes the internal degree.

\begin{center}
\begin{tikzpicture}[thick,scale=0.2, every node/.style={scale=0.85}]
\matrix (m) [matrix of math nodes,
             nodes in empty cells,
             nodes={minimum width=6.95ex,
                    minimum height=7ex,
                    outer sep=-5pt},
             column sep=-0.3ex, row sep=-3.4ex,
             text centered,anchor=center]{
&2d  &    &    &  \F_p \varphi^2x  &   &  &                 \strut&  \\
&\vdots  &    &   &    & \reflectbox{$\ddots$}&    &  &             \\
&d+1  &   \strut  &     &  \fp \gamma_1(\sigma \tau_0) \sigma x   &   & &  &  &              \\
&d&     &  \fp \sigma x &   &  &     &  & \\  
&\vdots  &    &    &    &   &    & \strut&   \reflectbox{$\ddots$}                   \\
&p+1  &   \strut  &     &     &  &  &
         \F_p\gamma_{p+1}(\sigma \tau_0)  & \\
&p&     &  &   &  & \fp \gamma_p(\sigma \tau_0)               &  &\\
&\vdots      &    &   &    & \reflectbox{$\ddots$} & & &  \\
&2      &     &    & \fp \gamma_2(\sigma \tau_0)  &  &          &  & \\
&1 &    &  \F_p\sigma \tau_0  & & & &  & \\
&0      &  \F_p   &    &    &     &    & & \\
&\quad\strut &   0 &  1  &  2  &  \cdots &  p  &  p+1 &        \cdots \\
  };
    \draw[-stealth] (m-7-7) -- (m-4-4);
        \draw[-stealth] (m-6-8) -- (m-3-5);
        \draw[-stealth] (m-5-9) -- (m-2-6);
\draw[thick] (m-1-2.north east) -- (m-12-2.east) ;
\draw[thick] (m-12-2.north) -- (m-12-9.north east) ;
\end{tikzpicture}
\end{center}
  \subsection{The B\"okstedt spectral sequence for extensions of $Y$}
  
  Here, we set up and describe the spectral sequences we use for various Postnikov extensions of $Y$.  Recall that throughout this section, we have $d = \lv x \rv = 2p-2$.
  \begin{remark}
  We do not yet claim the existence of an $\hz$-algebra $X$ with homotopy ring $\fp[x]/(x^m)$ and $2p-2$ Postnikov section equivalent to $Y$ for $m>2$, c.f.\ Theorem \ref{thm nontrivial}.  Before we prove such an $X$ exists, we prove several properties of a hypothetical such $X$ without claiming that it exists. Note that our statements below regarding $X$ would be vacuously true if such an $X$ did not exist. 
  \end{remark}
  
  Let $X$ be an $\hz$-algebra whose $2p-2$ Postnikov section is equivalent to   $Y$ and homotopy ring is $\fp[x]/(x^m)$ for $m>2$. We always assume that $X$ is q-cofibrant. Since $Y$ is q-fibrant, the $2p-2$ Postnikov section map of $X$ provides a map 
  \[\psi \co X \to Y\]
  of $\hz$-algebras serving as the $2p-2$ Postnikov section of $X$. Furthermore, we consider $\hfp \wdgz X$ as an $\hfp$-algebra over $\hfp$ through the composite map 
  \begin{equation}\label{eq def of upsilon}
 \upsilon \co \hfp \wdgz X \xrightarrow{id \wdgz \psi} \hfp \wdgz Y \xrightarrow{id \wdgz \epsilon_Y} \hfp \wdgz \hfp \xrightarrow{m} \hfp. \end{equation}
  In this situation, the map $\hfp \wdgz \psi$ is a map of $\hfp$-algebras over $\hfp$.
  
   Let $\overline{E}$ denote the  B\"okstedt spectral sequence computing $\thh^{\hfp}_*(\hfp \wdgz X , \hfp)$ where $\hfp \wdgz X $ acts on $\hfp$ via  $\upsilon$.  Note that Lemma \ref{lemma homology} provides 
   \[\pis(\hfp \wdgz X) \cong \lambdatau \otimes \F_p[x]/(x^m)\] where $\lv \tau_0 \rv = 1$. We have: 
   \begin{equation*} \label{eq ss e2pagefor X}
    \begin{split}
        \overbar{E}^2 &\cong \hhfps(\lambdatau \otimes \F_p[x]/(x^m),\F_p) \\
       & \cong \hhfps(\lambdatau,\F_p) \otimes \hhfps(\F_p[x]/(x^m),\F_p) \\
       & \cong  \Gamma_{\fp}(\sigma \tau_0) \otimes \Lambda_{\fp}(\sigma x) \otimes \Gamma_{\F_p}(\varphi^m x),\\
    \end{split}
\end{equation*}
where the second equality follows by Proposition \ref{prop tensorsplitting} and the last equality follows by Propositions \ref{prop tor of freecommutative} and \ref{prop tor of quot of polynomial}.
  \begin{lemma}\label{lem bokstedt ss for X}
  Let $X$ be an $\hz$-algebra with $2p-2$ Postnikov section equivalent to $Y$ and homotopy ring $\fp[x]/(x^m)$ for some $m>2$. Let $\overbar{E}$ denote the B\"okstedt spectral sequence corresponding to the map $\upsilon$ given above. We have 
  \[\overbar{E}^2 \cong  \Gamma_{\fp}(\sigma \tau_0) \otimes \Lambda_{\fp}(\sigma x) \otimes \Gamma_{\F_p}(\varphi^m x) \Longrightarrow{} \thh_*^{\hfp}(\hfp \wdgz X, \hfp).\]
  For every $z \in  \Gamma_{\F_p}(\sigma \tau_0) \otimes \Lambda_{\F_p}(\sigma x) \subset \overbar{E}^2$ and $1<r<p-1$,  
 \[\overbar{d}^rz= 0\]
 and if $z\neq 0$, then $z$ is not in the image of $\overbar{d}^r$. Futhermore,
  
  \[\overbar{d}^{p-1}\gamma_k(\sigma \tau_0)= \gamma_{k-p}(\sigma \tau_0)\sigma x\]
for every $k \geq p$. 
  \end{lemma}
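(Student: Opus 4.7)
The plan is to follow the templates of Lemmas \ref{lemma e2 page of the bokstedt of Y} and \ref{lemma differentials for Y}: dispose of the low-rank differentials by a degree argument on the $E^2$ page, and then transport the non-trivial $d^{p-1}$ from the spectral sequence $E$ via the map of $\hfp$-algebras over $\hfp$ induced by $\psi \co X \to Y$.

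First I would record the bigradings of the canonical generators of $\overbar{E}^2$: the $\gamma_k(\sigma \tau_0)$ lie on the line $t = s$, the elements $\gamma_k(\sigma \tau_0)\sigma x$ lie on $t = s + 2p - 3$, and every remaining non-trivial monomial contains a positive divided power of $\varphi^m x$ and therefore lies on a line $t = s + c$ with $c \geq m(2p-2) - 2 \geq 6p - 8 > 4p - 6$, using $m \geq 3$. The differential $\overbar{d}^r$ shifts the constant $c$ by $2r - 1$, and for $1 < r < p - 1$ we have $1 < 2r - 1 < 2p - 3$. Since there are no non-trivial classes on lines with $0 < c < 2p - 3$, the same reasoning as in Lemma \ref{lemma e2 page of the bokstedt of Y} yields both the vanishing of $\overbar{d}^r$ on $\Gamma_{\F_p}(\sigma \tau_0) \otimes \Lambda_{\F_p}(\sigma x)$ and the fact that no non-trivial element of this subring lies in the image of $\overbar{d}^r$ for $1 < r < p - 1$.

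For the non-trivial differential, the map $\hfp \wdgz \psi$ induces a map of B\"okstedt spectral sequences $\overbar{E}^* \to E^*$. By Lemma \ref{lemma homology}, the induced map on homotopy rings is the quotient $\lambdatau \otimes \F_p[x]/(x^m) \to \lambdatau \otimes \Lambda_{\F_p}(x)$, so by the naturality statements of Propositions \ref{prop tensorsplitting}, \ref{prop tor of freecommutative} and \ref{prop tor of quot of polynomial}, the induced map on $E^2$ pages restricts to the identity on $\Gamma_{\F_p}(\sigma \tau_0) \otimes \Lambda_{\F_p}(\sigma x)$, and the degree argument above shows that this identity persists to the $E^{p-1}$ pages.

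A brief check of the bidegree equations shows that the target bidegree $(k - p + 1, k + p - 2)$ of $\overbar{d}^{p-1}\gamma_k(\sigma \tau_0)$ contains no non-trivial class on $\overbar{E}^{p-1}$ other than $\gamma_{k-p}(\sigma \tau_0) \sigma x$: any monomial $\gamma_a(\sigma \tau_0)(\sigma x)^b \gamma_c(\varphi^m x)$ of that bidegree forces $b(2p - 3) + c(m(2p-2)-2) = 2p - 3$, whose only solution with $b \in \{0,1\}$ and $c \geq 0$ is $b = 1, c = 0, a = k - p$, given $m \geq 3$. Since the comparison map sends $\gamma_k(\sigma \tau_0) \mapsto \gamma_k(\sigma \tau_0)$ in $E^{p-1}$ and $d^{p-1}\gamma_k(\sigma \tau_0) = \gamma_{k-p}(\sigma \tau_0) \sigma x$ there by Lemma \ref{lemma differentials for Y}, the identification of the comparison map on the target bidegree forces $\overbar{d}^{p-1}\gamma_k(\sigma \tau_0) = \gamma_{k-p}(\sigma \tau_0)\sigma x$. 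The main obstacle is verifying the naturality cleanly, so that the map of $E^2$ pages really is the identity on $\Gamma_{\F_p}(\sigma \tau_0) \otimes \Lambda_{\F_p}(\sigma x)$; without this, the transported differential could vanish or acquire an extraneous $\varphi^m x$-term.
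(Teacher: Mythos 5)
Your proposal follows the paper's proof closely: identify $\overbar{E}^2$, dispose of $\overbar{d}^r$ for $1 < r < p-1$ by the same slope/degree argument used for $E$, and then transport $d^{p-1}$ along the map of spectral sequences $\psi_*: \overbar{E} \to E$ induced by the Postnikov section $\psi$, using the fact that $\psi_*$ restricts to an isomorphism on $\Gamma_{\F_p}(\sigma\tau_0)\otimes\Lambda_{\F_p}(\sigma x)$ through the $(p-1)$-page. Your additional bidegree check, pinning down $\gamma_{k-p}(\sigma\tau_0)\sigma x$ as the only possible target of $\overbar{d}^{p-1}\gamma_k(\sigma\tau_0)$, is a worthwhile explicit justification of a step the paper leaves implicit.

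The one point you flag as an "obstacle" — verifying that $\psi_2$ restricts to the identity on $\Gamma_{\F_p}(\sigma\tau_0)\otimes\Lambda_{\F_p}(\sigma x)$ — is indeed where the paper does a small amount of real work, and Propositions \ref{prop tor of freecommutative}/\ref{prop tor of quot of polynomial} alone do not supply the needed functoriality (they are stated for fixed rings). The paper resolves this by combining the naturality of the tensor splitting in Proposition \ref{prop tensorsplitting} with an explicit simplicial-level check: $\sigma x$ is represented by $x$ in simplicial degree one of $CB^{\F_p}_\bullet(-,\F_p)$, and since $\pi_*\psi(x) = x$, the induced map on Hochschild homology sends $\sigma x$ to $\sigma x$. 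You should include this calculation to close the gap you noticed; once it is in place, your argument is complete and correct.
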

  \begin{proof}
   The identification of the second page is established before the statement of the lemma. 
   
   The statement on the triviality of the differentials follows by degree reasons. The proof of this fact is as in the proof of Lemma \ref{lemma e2 page of the bokstedt of Y}.
   
   What is left to prove is the last statement.  Let $E$ denote the spectral sequence calculating $\textup{THH}^{\hfp}(\hfp \wdgz Y,\F_p)$ as before. The $E^2$ page of this spectral sequence is given by: 
   \[E^2 \cong \Gamma_{\F_p}(\sigma \tau_0) \otimes \Lambda_{\F_p}(\sigma x) \otimes \Gamma_{\F_p}(\varphi^2x),\] 
   see Lemma \ref{lemma e2 page of the bokstedt of Y}. Note that $\overbar{E}^2$ is the same as $E^2$ except that instead of the generator $\varphi^2 x$ in degree $(2,2d)$, $\overbar{E}^2$ has a generator $\varphi^m x$ in degree $(2,md)$. 
   
   To compare the spectral sequences $E$ and $\overbar{E}$, we use the map  $\hfp \wdgz \psi$. This is a map of  $\hfp$-algebras over $\hfp$ and therefore induces a map of spectral sequences $\psi_*\co \overbar{E} \to E$. 

To understand $\psi_*$, we need to know the induced map \[\pis(\hfp \wdgz \psi) \co \pis(\hfp \wdgz X) \to \pis(\hfp \wdgz Y).\] In Lemma \ref{lemma homology} below, we show that this map is given by:
\[\pis(\hfp \wdgz \psi) = id \otimes \pis \psi \co \lambdatau \otimes \F_p[x]/(x^m) \to \lambdatau \otimes \Lambda_{\F_p}(x),\] where $id$ denotes the identity map of $\lambdatau$ and  $\pis \psi \co \F_p[x]/(x^m) \to \Lambda_{\F_p}(x)$ is the ring map that sends $x$ to $x$.

Now we show that the induced map $\overbar{E}^2\to E^2$ carries $\Gamma_{\F_p}(\sigma \tau_0)\otimes \Lambda_{\F_p}(\sigma x)\otimes 1 \subset \overbar{E}^2$ isomorphically onto $\Gamma_{\F_p}(\sigma \tau_0)\otimes \Lambda_{\F_p}(\sigma x)\otimes 1 \subset E^2$ in the canonical way. By the splitting of $\pis(\hfp \wdgz \psi)$ above and the naturality of the splitting in Proposition \ref{prop tensorsplitting}, we obtain that the map $\overbar{E}^2 \to E^2$ is of the form $id \otimes f$ where $id$ denotes the identity map of $\Gamma_{\fp}(\sigma \tau_0)$ and $f$ denotes the map 
\begin{equation*}
    \begin{split}
\hhfps(\F_p[x]/(x^m),\F_p)\cong \Lambda_{\F_p}&(\sigma x) \otimes \Gamma_{\F_p}(\varphi^m x)\\ &\to  \hhfps(\Lambda_{\F_p}(x),\F_p)\cong \Lambda_{\F_p}(\sigma x) \otimes \Gamma_{\F_p}(\varphi^2 x)
\end{split}
\end{equation*}
induced by $\pis\psi$.
 Furthermore, $\sigma x\otimes 1$ on the left hand side is represented by $x \in \F_p[x]/(x^m)$ in the first simplicial degree of the simplicial ring $\F_p[x]/(x^m)^{\otimes \bullet}$ whose homotopy (homology of its normalized chain complex) is $\hhfps(\F_p[x]/(x^m),\F_p)$. Similarly, $\sigma x\otimes 1$ on the right hand side is represented by $x \in \Lambda_{\F_p}(x)$ in the first simplicial degree of the simplicial ring $\Lambda_{\F_p}(x)^{\otimes \bullet}$ whose homotopy is $\hhfps(\Lambda_{\F_p}(x),\F_p)$. Since $\pis\psi(x)=x$, we obtain that $f(\sigma x\ot 1)= \sigma x\ot 1$. This proves that the map $\psi_2\co \overbar{E}^2 \to E^2$, given by $id \otimes f$, carries  $\Gamma_{\F_p}(\sigma \tau_0)\otimes \Lambda_{\F_p}(\sigma x)\otimes 1 \subset \overbar{E}^2$  isomorphically onto $\Gamma_{\F_p}(\sigma \tau_0)\otimes \Lambda_{\F_p}(\sigma x)\otimes 1 \subset E^2$.
 
 Using the second statement in the lemma and  Lemma \ref{lemma e2 page of the bokstedt of Y}, we deduce that \[\psi_{p-1}\co \overbar{E}^{p-1} \to E^{p-1}\] also carries $\Gamma_{\F_p}(\sigma \tau_0)\otimes \Lambda_{\F_p}(\sigma x)\otimes 1 \subset \overbar{E}^{p-1}$  isomorphically onto $\Gamma_{\F_p}(\sigma \tau_0)\otimes \Lambda_{\F_p}(\sigma x)\otimes 1 \subset E^{p-1}$. From this and Lemma \ref{lemma differentials for Y}, we deduce that 
\begin{equation*}\label{eq nontrivldifferentials for X}
\overbar{d}^{p-1}\gamma_k(\sigma \tau_0)= \gamma_{k-p}(\sigma \tau_0)\sigma x
\end{equation*}
for every $k \geq p$ for the spectral sequence $\overbar{E}$.
  \end{proof}

 \section{Classification of Postnikov extensions}\label{sec proof of the main theorems}
 
 We start this section with   classification of various Postnikov extensions of interest. At the end of this section, we prove Theorems \ref{thm nontrivial} and \ref{thm hochschild}.
 
 Throughout this section, let $d= \lv x \rv = 2p-2$ and let  $Y$ denote a q-cofibrant $\hz$-algebra corresponding to the unique non-formal DGA with homology $\lambdafp(x)$; see Example \ref{ex duggershipley}. Note that in this section, we do not assume that $p$ is odd. 
 \subsection{Postnikov extensions of DGAs with truncated polynomial homology}
 Let $X$ be a q-cofibrant $H\Z$-algebra whose $d$ Postnikov section is  equivalent to $Y$ and whose homotopy ring is  $\F_p[x]/(x^m)$  for some $m>1$. As before, we do not assume the existence of such an $X$ for $m>2$. On the other hand, we have $X \simeq Y$ when $m=2$.
 
 Let $\overbar{E}$ denote the spectral sequence computing $\thh^{\hfp}(\hfp \wdgz X,\hfp)$. For odd $p$, this spectral sequence is described in Lemma \ref{lem bokstedt ss for X} for $m>2$, and it is described in Lemma \ref{lemma e2 page of the bokstedt of Y} for $m=2$ (where it is denoted by $E$). The construction of this spectral sequence for $p=2$ follows as in the odd prime case.
 
 \begin{proposition} \label{prop HHofX}
Let $X$ be an $\hz$-algebra as above and let $d = \lv x \rv = 2p-2$. For degrees less than $md+2$: \[\textup{THH}^{\hfp}_*(\hfp \wdgz X,H\F_p) \cong \F_p[\mu]/(\mu^p),\] with $\lv \mu \rv = 2$. In degree $md+2$, we have  \[\textup{THH}^{\hfp}_{md+2}(\hfp \wdgz X,H\F_p) \cong \F_p.\] 
For odd $p$, this Hochschild homology group is generated by the class $\varphi^m x \in \overbar{E}^2$.
\end{proposition}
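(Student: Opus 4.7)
The plan is to analyze the B\"okstedt spectral sequence $\overbar{E}$ described just above the proposition, truncated to total degrees $\leq md+2$. By Lemma \ref{lem bokstedt ss for X} (for $m>2$) or Lemma \ref{lemma e2 page of the bokstedt of Y} together with Lemma \ref{lemma differentials for Y} (for $m=2$, where $X \simeq Y$), the $E^2$-page is
\[\overbar{E}^2 \cong \Gamma_{\fp}(\sigma \tau_0) \otimes \Lambda_{\fp}(\sigma x) \otimes \Gamma_{\fp}(\varphi^m x),\]
with the totalization of $\varphi^m x$ equal to $md+2$. Consequently, for total degrees strictly less than $md+2$ only the subalgebra $\Gamma_{\fp}(\sigma\tau_0)\otimes \Lambda_{\fp}(\sigma x)$ contributes, and every class $\gamma_c(\varphi^m x)$ with $c\geq 1$ is excluded from consideration.

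Next I invoke the differentials of Lemma \ref{lem bokstedt ss for X}: the only non-trivial differential that concerns this subalgebra is $\overbar{d}^{p-1}\gamma_k(\sigma\tau_0) = \gamma_{k-p}(\sigma\tau_0)\sigma x$ for $k\geq p$. This single family simultaneously kills every $\gamma_k(\sigma\tau_0)$ with $k\geq p$ and realizes every $\gamma_k(\sigma\tau_0)\sigma x$ (for $k\geq 0$) as a boundary. What remains is $\gamma_0(\sigma\tau_0),\gamma_1(\sigma\tau_0),\dots,\gamma_{p-1}(\sigma\tau_0)$, one in each even total degree $0,2,\dots,2p-2$. A quick bidegree check shows these are permanent cycles on all further pages (outgoing differentials land in negative homological degree, and the only incoming target with non-negative internal degree is the bidegree $(0,0)$ corner), and since each survivor is alone in its total degree no extension problem arises. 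This yields the identification $\thh^{\hfp}_*(\hfp\wdgz X,\hfp) \cong \fp[\mu]/(\mu^p)$ as a graded $\fp$-module in degrees less than $md+2$.

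For degree $md+2$, I enumerate the $E^2$-classes of that total degree directly. Writing an arbitrary class as $\gamma_a(\sigma\tau_0)(\sigma x)^b \gamma_c(\varphi^m x)$ with $b\in\{0,1\}$ and using that $d$ is even, the system $2a+b(1+d)+c(2+md)=md+2$ leaves only two candidates: $\gamma_{m(p-1)+1}(\sigma\tau_0)$ and $\varphi^m x$. Since $m(p-1)+1 \geq p$ for $m\geq 2$, the first is killed by $\overbar{d}^{p-1}$. For $\varphi^m x$, which sits in bidegree $(2,md)$, the outgoing differential $\overbar{d}^r$ lands in bidegree $(2-r, md+r-1)$, and this is trivially zero for $r \geq 3$ while intersecting $\overbar{E}^2$ non-trivially at $r=2$ only in $(0,0)$, which lies in internal degree $0$; hence $\varphi^m x$ is a permanent cycle.

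The main obstacle is ruling out that $\varphi^m x$ itself is a boundary. A bidegree count shows that the only possible source lies in bidegree $(2+r, md-r+1)$ with $r=(m-1)(p-1)$, namely the class $\gamma_{(m-1)(p-1)+1}(\sigma\tau_0)\sigma x$. When $m=2$ this becomes $\gamma_p(\sigma\tau_0)\sigma x$ with $r=p-1$, and by Lemma \ref{lem bokstedt ss for X} this source equals $\overbar{d}^{p-1}(\gamma_{2p}(\sigma\tau_0))$; the relation $\overbar{d}^{p-1}\circ \overbar{d}^{p-1}=0$ then forces its differential to $\varphi^2 x$ to vanish. When $m\geq 3$, we have $r = (m-1)(p-1)\geq p$, so on the $E^r$-page the source class has already been killed on $E^p$ (it is the $\overbar{d}^{p-1}$-boundary of $\gamma_{(m-1)(p-1)+1+p}(\sigma\tau_0)$). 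In every case $\varphi^m x$ survives to $E^\infty$ and generates $\thh^{\hfp}_{md+2}(\hfp\wdgz X,\hfp) \cong \fp$. The $p=2$ case goes through verbatim, substituting the analogous $p=2$ version of Lemma \ref{lem bokstedt ss for X} for the differentials used above.
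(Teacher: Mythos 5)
Your argument for odd $p$ is correct and follows essentially the same approach as the paper: identify the $E^2$-page, use the $\overbar{d}^{p-1}$ differentials from Lemma \ref{lem bokstedt ss for X} to see that only $\gamma_0(\sigma\tau_0),\dots,\gamma_{p-1}(\sigma\tau_0)$ survive in total degrees below $md+2$, and rule out any incoming differential to $\varphi^m x$ by observing that the only candidate sources $\gamma_k(\sigma\tau_0)\sigma x$ are themselves $\overbar{d}^{p-1}$-boundaries (so $\overbar{d}^{p-1}$ vanishes on them by $d\circ d = 0$, and they are gone on later pages). Your case split on $m$ is slightly more explicit than the paper's unified statement, but the underlying reason is the same.

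The claim in your final sentence, that the $p=2$ case ``goes through verbatim'', is not correct, and there is no ``$p=2$ version of Lemma \ref{lem bokstedt ss for X}'' to substitute. At $p=2$ the algebra $\pi_*(\hft\wdgz X)$ is computed in Lemma \ref{lemma homologyfor p eq 2} to be $\F_2[\xi_1]/(\xi_1^{2m})$ with $\lv\xi_1\rv=1$, not $\lambdatau\otimes\F_2[x]/(x^m)$ — the degree-one class squares nontrivially to $x$. Consequently the $E^2$-page is $\Lambda_{\F_2}(\sigma\xi_1)\otimes\Gamma_{\F_2}(\varphi^{2m}\xi_1)$ (by Proposition \ref{prop tor of quot of polynomial}), which is a different answer from what one would get by setting $p=2$ in your odd-prime $E^2$. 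There is no $\overbar{d}^{p-1}$ family here; the spectral sequence degenerates at $E^2$ for degree reasons, and the identification $\F_2[\mu]/(\mu^2)$ in degrees $<2m+2$ together with the $\F_2$ in degree $2m+2$ (generated by $\varphi^{2m}\xi_1$) falls out immediately. The conclusion is the same, but you should replace the odd-prime differential argument by this direct degeneration rather than an ``analogous'' version of the differential lemma, which does not exist.
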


\begin{proof}
We have 
\begin{equation*} 
 \overbar{E}^2_{s,t} = \hh^{\fp}_{s,t}(\pis(\hfp \wdgz X),\F_p)\Longrightarrow \text{THH}^{H\F_p}_{s+t}(H\F_p \wedge_{H\Z}X, H\F_p).
\end{equation*}

We start with the case $p=2$. By Lemma \ref{lemma homologyfor p eq 2}, we have $\pis(\hft \wdgz X)\cong \F_2[\xi_1]/(\xi_1^{2m})$  where $\lv \xi_1 \rv=1$. Therefore using Proposition \ref{prop tor of quot of polynomial}, we obtain 
\begin{equation*}\label{eq p is 2 e2 of X}
\overbar{E}^2 \cong \hhfts(\F_2[\xi_1]/(\xi_1^{2m}),\F_2) \cong \Lambda_{\F_2}(\sigma \xi_1) \otimes \Gamma_{\F_2}(\varphi^{2m} \xi_1)\end{equation*}
with $\textup{deg}(\sigma \xi_1)= (1,1)$ and $\text{deg}(\varphi^{2m} \xi_1)= (2,2m)$. By degree reasons,  this spectral sequence degenerates at the $\overbar{E}^2$ page, i.e.\ $\overbar{d}^r=0$ for $r \geq 2$. This completes the proof of the proposition for $p=2$.

Let $p$ be an odd prime. We have 
\begin{equation*} \label{eq E2termforY for odd p}
\overbar{E}^2 \cong \Gamma_{\F_p}(\sigma \tau_0) \otimes \Lambda_{\F_p}(\sigma x) \otimes \Gamma_{\F_p}(\varphi^mx).
\end{equation*}
For every $z\in \Gamma_{\F_p}(\sigma \tau_0) \otimes \Lambda_{\F_p}(\sigma x) \otimes 1 $ and  $1<r<p-1$,
\[\overbar{d}^r(z)= 0  \]
and if $z\neq 0$, then $z$ is not in the image of $\overbar{d}^r$. By Lemmas \ref{lemma differentials for Y} and \ref{lem bokstedt ss for X}, 
\[\overbar{d}^{p-1}\gamma_k(\sigma \tau_0)= \gamma_{k-p}(\sigma \tau_0)\sigma x\]
for every $k \geq p$. See the figure at the end of  Section \ref{subsec bokstedt ss for Y} for an image of the $\overbar{E}^{p-1}$ page of this spectral sequence for $m=2$. For $m>2$ the image would be similar except the fact that the class $\varphi^2 x$ would be replaced by the class $\varphi^m x$.

For the proof of the first statement, we start by noting that every element of total degree less than $md+2$ lies in $\Gamma_{\F_p}(\sigma \tau_0) \otimes \Lambda_{\F_p}(\sigma x) \otimes 1 \subset \overbar{E}^2$.  With the differentials mentioned above, the elements $\gamma_k(\sigma \tau_0) \sigma x$ for $k\geq 0$ represent the trivial class on the $\overbar{E}^\infty$ page and the elements $\gamma_l(\sigma \tau_0)$ for $l \geq p$ do not survive to the $\overbar{E}^\infty$ page. The elements $\gamma_k(\sigma \tau_0)$ for  $k<p$ carry trivial differentials up to the $\overbar{E}^{p-1}$ page; the  rest of the differentials on these elements are also trivial because they have homological degree less than $p$. Also note that there are no elements that could hit them because every element of larger homological degree also have larger internal degree. Therefore, the elements $\gamma_k(\sigma \tau_0)$ for  $k<p$ survive non-trivially to the $\overbar{E}^{\infty}$ page. This shows that below total degree $md+2$, what survives to the $\overbar{E}^{\infty}$ page is the truncation of $\Gamma_{\F_p}(\sigma \tau_0)\otimes 1 \otimes 1$ above total degree $d$. In other words, for total degrees less than $md+2$, we have 
\[\overbar{E}^\infty \cong \fp[\sigma \tau_0]/(\sigma \tau_0^{p}).\]
This completes the proof of the first part of the proposition. 

Now we prove the second part of the proposition. Note that the classes $\gamma_k(\sigma \tau_0) \sigma x$ have odd total degree. Considering this, one observes that the only classes on the $\overbar{E}^2$ page with total degree $md+2$ are $\gamma_{md/2+1}(\sigma \tau_0)$ and $\varphi^mx$. As mentioned before, $\gamma_{md/2+1}(\sigma \tau_0)$ do not survive to the $\overbar{E}^\infty$ page since $md/2+1\geq p$. Therefore, it is sufficient to show that $\varphi^m x$ survives non-trivially to the $\overbar{E}^\infty$ page. Since $\varphi^mx$ has homological degree $2$, all  the differentials on $\varphi^mx$ are trivial. Furthermore, an element that could possibly hit $\varphi^mx$ should have odd total degree and internal degree less than that of $\varphi^mx$. This is the case only for elements of the form $\gamma_k(\sigma \tau_0) \sigma x$. However, these elements gets to carry trivial differentials up to $\overbar{E}^{p-1}$. Furthermore, they are in the image of the differentials on $\overbar{E}^{p-1}$; therefore, they carry trivial differentials on $\overbar{E}^{p-1}$ and after too. We deduce that the elements $\gamma_k(\sigma \tau_0) \sigma x$ cannot hit $\varphi^mx$. Therefore, $\varphi^mx$ survives non-trivially to the $\overbar{E}^\infty$ page.  

\end{proof}

The corollary below provides the classification of type $(\fp,md)$ Postnikov extensions of $X$, see Theorem \ref{thm pstnk dugger shipley}.
\begin{corollary}\label{cor aq cohomology groups of X}
Let $X$ be an $\hz$-algebra as above. There are isomorphisms:
\begin{equation*}
    \begin{split}
\textup{Der}^{md+1}_{\hz}(X,H\fp)/\aut(\fp)&\cong \textup{THH}^{\hfp}_{md+2}(\hfp \wdgz X,H\F_p)/\aut(\fp) \\&\cong \F_p/\aut(\fp)\\ &= \{[0],[1]\}.
\end{split}
\end{equation*}
\end{corollary}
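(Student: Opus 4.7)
The corollary is essentially a repackaging of Proposition \ref{prop HHofX} via the chain of identifications established in Section \ref{sec pstnkv and hochschild}. The plan is to assemble these identifications and then account for the action of $\aut(\fp)$.

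First, I would apply Theorem \ref{thm aq coh to thhcoh} (with $HM = \hfp = X[0]$ and $n = md$, noting $md > 0$) to obtain an isomorphism
\[\textup{Der}^{md+1}_{\hz}(X,\hfp) \cong \textup{THH}^{md+2}_{\hz}(X,\hfp)\]
which, crucially, respects the action of $\aut(\fp)$. Next, Proposition \ref{prop thh coh to thh ho} converts this into
\[\textup{THH}^{md+2}_{\hz}(X,\hfp) \cong \textup{Hom}_{\fp}(\textup{THH}^{\hz}_{md+2}(X,\hfp),\fp),\]
and then Proposition \ref{prop thhchangeofbase} (together with Remark \ref{rmk cofsmash}, which ensures $\hfp \wdgz X$ is q-cofibrant as an $\hfp$-algebra) gives
\[\textup{THH}^{\hz}_{md+2}(X,\hfp) \cong \textup{THH}^{\hfp}_{md+2}(\hfp \wdgz X,\hfp).\]

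The second piece of Proposition \ref{prop HHofX} identifies this last group with $\fp$. Since the $\fp$-dual of $\fp$ is again $\fp$, we arrive at $\textup{THH}^{\hfp}_{md+2}(\hfp \wdgz X,\hfp) \cong \fp$ and hence at
\[\textup{Der}^{md+1}_{\hz}(X,\hfp) \cong \textup{THH}^{\hfp}_{md+2}(\hfp \wdgz X,\hfp) \cong \fp.\]
This furnishes the first two isomorphisms of the corollary (before passing to quotients).

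Finally, I would verify that the isomorphisms above are $\aut(\fp)$-equivariant so that we may descend to orbits. The action of $\aut(\fp) = \fp^\times$ on $\fp$ by multiplication has precisely two orbits, $\{0\}$ and $\fp \setminus \{0\}$, giving $\fp/\aut(\fp) = \{[0],[1]\}$. The equivariance of the first isomorphism is built into Theorem \ref{thm aq coh to thhcoh}; the duality in Proposition \ref{prop thh coh to thh ho} and the change-of-base isomorphism in Proposition \ref{prop thhchangeofbase} are natural in $\hfp$ and hence commute with the evident $\aut(\fp)$-action. There is no real obstacle in this argument, since all the technical work has already been carried out; the only point to be careful about is keeping track of the $\aut(\fp)$-action through each natural isomorphism, but this is straightforward because each of the cited results is stated with precisely this naturality in mind.
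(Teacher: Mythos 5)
Your proposal is correct and takes essentially the same route as the paper: chain together Theorem \ref{thm aq coh to thhcoh}, Proposition \ref{prop thh coh to thh ho}, and Proposition \ref{prop thhchangeofbase} for the first isomorphism, then invoke Proposition \ref{prop HHofX} and the fact that $\fp$ is a field for the remaining two, checking $\aut(\fp)$-equivariance throughout. One small slip: when citing Theorem \ref{thm aq coh to thhcoh} you should take $n = md+1$ rather than $n = md$, since the theorem reads $\textup{Der}^n_R(X,HM) \cong \textup{THH}^{n+1}_R(X,HM)$; this does not affect the argument since the displayed isomorphism you actually use is the correct one.
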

\begin{proof}
 The first isomorphism follows by Theorem \ref{thm aq coh to thhcoh},  Proposition \ref{prop thh coh to thh ho} and Proposition \ref{prop thhchangeofbase}. The second isomorphism follows by Proposition \ref{prop HHofX} and the last isomorphism follows by the fact that $\fp$ is a field. 
\end{proof}

The type $(\fp,md)$ Postnikov extension of $X$ corresponding to the trivial element in the corollary above provides the square zero extension 
\[X \oplus \Sigma^{md}\hfp.\]
We call this the trivial extension. In the homotopy ring of the trivial extension, $x^m=0$. Since our goal is to obtain Postnikov extensions that build up to an $\hz$-algebra with polynomial homotopy ring, we are not interested in this trivial extension. In Proposition \ref{prop inductivestep}, we show that the Postnikov extension corresponding to the non-trivial element do satisfy $x^m \neq 0$ as desired. 

\begin{lemma} \label{prop internaldegreeagree}
 Let $f \co A_* \to C_*$ be a map of non-negatively graded commutative $\fp$-algebras over a non-negatively graded commutative $\fp$-algebra $B_*$. If $f$  is an isomorphism below degree $n$ for some $n>0$, then the induced map
\[ \hh^{\fp}_{*,*}(A_*,B_*) \to \hh^{\fp}_{*.*}(C_*,B_*)\]
is also an isomorphism below internal degree $n$. 
\end{lemma}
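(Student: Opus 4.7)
My plan is to prove this directly from the simplicial description of Hochschild homology given before Proposition \ref{prop tor of freecommutative}. Recall that $\hh^{\fp}_{*,*}(A_*,B_*)$ is computed as the homotopy (i.e.\ the homology of the normalized chain complex) of the simplicial bigraded $\fp$-module
\[ CB^{\fp}_\bullet(A_*,B_*) = A_*^{\otimes \bullet} \otimes B_*, \]
and the map on Hochschild homology induced by $f$ is the map of homotopy groups coming from the simplicial map
\[ f^{\otimes \bullet} \otimes \mathrm{id}_{B_*} \co A_*^{\otimes \bullet} \otimes B_* \to C_*^{\otimes \bullet} \otimes B_*. \]

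The key observation is a bookkeeping one on internal degrees. Since $A_*$, $B_*$ and $C_*$ are all non-negatively graded, any pure tensor $a_1 \otimes \cdots \otimes a_k \otimes b$ of total internal degree strictly less than $n$ forces each $\lv a_i \rv < n$, because the remaining factors contribute non-negative degrees. Because $f$ is an isomorphism in each internal degree strictly below $n$, it follows that for every simplicial degree $k$ the map
\[ f^{\otimes k} \otimes \mathrm{id}_{B_*} \co A_*^{\otimes k} \otimes B_* \to C_*^{\otimes k} \otimes B_* \]
is an isomorphism in internal degrees strictly below $n$.

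Since the face and degeneracy maps of $CB^{\fp}_\bullet(-,B_*)$ preserve internal degree and are natural in the first variable, truncating each simplicial level below internal degree $n$ produces an isomorphism of simplicial $\fp$-modules. Passing to normalized chain complexes and then taking homology gives the desired isomorphism on $\hh^{\fp}_{*,*}(-,B_*)$ in internal degrees below $n$. The only thing one needs to be careful about is keeping the two gradings separate throughout; no genuine obstacle arises, and the argument is essentially a matter of organizing the bookkeeping.
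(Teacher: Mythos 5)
Your proposal is correct and takes essentially the same approach as the paper: both use the simplicial model $A_*^{\otimes\bullet}\otimes B_*$, observe that non-negativity of the gradings forces each tensor factor of an element of internal degree $<n$ to itself lie in degree $<n$ (so $f^{\otimes k}\otimes\mathrm{id}$ is an isomorphism in internal degrees $<n$ at each simplicial level), and conclude using the fact that homotopy of a simplicial graded module is computed internal-degree-wise.
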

\begin{proof}
 Let $A_*^{\otimes \bullet}\otimes B_*$ denote the simplicial ring whose degree $m$ homotopy (i.e.\ degree $m$ homology of its  normalization) is $\hh^{\F_p}_{m,*}(A_*,B_*)$ and let $C_*^{\otimes \bullet}\otimes B_*$ denote the simplicial ring whose degree $m$ homotopy  is $\hh^{\F_p}_{m,*}(C_*,B_*)$. The map $A_* \to C_*$ induces a map of simplicial graded rings $A_*^{\otimes \bullet}\otimes B_* \to C_*^{\otimes \bullet}\otimes B_*$ which  is an isomorphism below internal degree $n$  at each simplicial degree. Since the homotopy of a simplicial graded ring is calculated (internal) degree wise, the induced map in homotopy is an isomorphism below internal degree $n$. 
\end{proof}

\begin{proposition} \label{prop inductivestep}
Let $X$ denote an $H\Z$-algebra whose homotopy ring is $\F_p[x]/(x^m)$ for some $m> 1$ and whose $d$ Postnikov section is equivalent to $Y$ where $d = \lv x \rv = 2p-2$. There is a unique non-trivial $(\F_p,md)$ type Postnikov extension of $X$. The homotopy ring of this non-trivial extension is $\F_p[x]/(x^{m+1})$. Indeed, this is the only Postnikov extension of $X$ with homotopy ring $\F_p[x]/(x^{m+1})$.
\end{proposition}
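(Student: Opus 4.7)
The plan is to classify the type $(\F_p,md)$ Postnikov extensions of $X$ using the Hochschild-cohomological machinery already set up, identify the homotopy rings of both extensions, and rule out a degenerate possibility for the non-trivial one.

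First I would invoke Corollary \ref{cor aq cohomology groups of X}, which gives exactly two weak equivalence classes of type $(\F_p,md)$ Postnikov extensions of $X$, represented by $[0]$ and $[1]$ in $\F_p/\aut(\F_p)$. This immediately gives the uniqueness of the non-trivial extension. The class $[0]$ is realised by the square-zero extension $X\vee\Sigma^{md}\hfp$, whose multiplication has $\tilde x\cdot\Sigma^{md}\hfp=0$ because the $X$-bimodule structure on $\Sigma^{md}\hfp$ factors through $\hfp$; in particular $\tilde x^m=0$ in its homotopy ring. So the trivial extension fails to have homotopy ring $\F_p[x]/(x^{m+1})$.

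Next, for a representative $W$ of the non-trivial class $[1]$, I would unpack the homotopy groups: $\pi_i(W)=\pi_i(X)$ for $i<md$, $\pi_{md}(W)\cong\F_p$, and $\pi_i(W)=0$ for $i>md$. Choose the unique (up to scalar) lift $\tilde x\in\pi_d(W)\cong\F_p$ of the generator $x\in\pi_d(X)$. Since $(m+1)d>md$ forces $\tilde x^{m+1}=0$ automatically, the only ambiguity in the ring $\pi_*(W)$ is whether $\tilde x^m\in\pi_{md}(W)\cong\F_p$ vanishes. If $\tilde x^m\neq 0$, then $\pi_*(W)\cong\F_p[x]/(x^{m+1})$, and the last sentence of the proposition follows formally since only $[1]$ among the two classes would then realise that ring. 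So the heart of the proof reduces to ruling out $\tilde x^m=0$ for the non-trivial $W$.

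The strategy there is by contradiction: assuming $\tilde x^m=0$, I would try to produce an $\hz$-algebra retraction $s\co X\to W$ of the projection $W\to X$, which would force $W$ to be weakly equivalent to the square-zero extension and contradict non-triviality of $[1]$. Obstructions to lifting the identity of $X=X[md-1]$ along $W\to X$ to an $\hz$-algebra map land in an Andr\'e--Quillen group that, via Theorem \ref{thm aq coh to thhcoh}, Proposition \ref{prop thh coh to thh ho} and Proposition \ref{prop HHofX}, corresponds exactly to $\pi_{md}(W)\cong\F_p$; unpacking the duality one expects the obstruction to be detected by $\tilde x^m$, so that $\tilde x^m=0$ kills the obstruction and the desired section $s$ exists.

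The main obstacle I foresee is precisely the identification of the splitting obstruction with the element $\tilde x^m\in\pi_{md}(W)$: it requires tracing the derivation classifying $W$ through the equivalences of Section \ref{sec pstnkv and hochschild} carefully enough to read off the multiplicative piece of the $k$-invariant. An alternative route would be to construct the non-trivial extension directly as a twisted pullback using the non-trivial $k$-invariant $k\co X\to X[0]\vee\Sigma^{md+1}\hfp$ and to extract $\tilde x^m$ from the pullback square; either way, the real content is relating the abstract classifying class to the concrete product structure on $\pi_*(W)$.
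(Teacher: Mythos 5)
Your proposal gets the overall shape of the argument right: the uniqueness of the non-trivial extension comes from Corollary \ref{cor aq cohomology groups of X}, the trivial extension manifestly has $\tilde x^m = 0$, and the whole content is to rule out $\tilde x^m = 0$ for the non-trivial extension. But the route you take for that last step has a genuine gap.

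You want to show that if $\tilde x^m = 0$ in $\pi_*(W)$, then the projection $W\to X$ admits a section, whose existence would force $W$ to be the trivial extension. The vanishing of the obstruction to such a section is, by the classification Theorem \ref{thm pstnk dugger shipley}, equivalent to the vanishing of the $k$-invariant in $\mathrm{Der}^{md+1}_{H\Z}(X,\hfp)$. You propose to identify that obstruction class with the element $\tilde x^m \in \pi_{md}(W)$, observing that both live in groups abstractly isomorphic to $\F_p$. But an abstract isomorphism is not an identification of elements: the $k$-invariant lives in $\mathrm{Der}^{md+1}_{H\Z}(X,\hfp)$, a derivation group depending only on $X$, while $\tilde x^m$ lives in $\pi_{md}(W)$, a ring-theoretic invariant of the extension. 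There is no apparent natural map between them, and nothing in the machinery of Section \ref{sec pstnkv and hochschild} produces one. The $k$-invariant is nonzero by hypothesis independently of what $\tilde x^m$ turns out to be, so your implication ``$\tilde x^m = 0 \Rightarrow$ obstruction vanishes'' needs a nontrivial identification you have not supplied. You flag this yourself as the main obstacle, but that obstacle is essentially the entire proposition.

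The paper's proof avoids this issue by running the contradiction in the opposite direction, and through Hochschild homology rather than sections. One assumes the ring $\pi_*Z$ is the square-zero extension; this assumption feeds into Lemma \ref{lemma homology} (odd $p$) or Lemma \ref{lemma homologyfor p eq 2} ($p=2$) to determine the $E^2$-page of the B\"okstedt spectral sequence for $\thh^{\hfp}_*(\hfp\wdgz Z,\hfp)$, and one then shows that the class $\varphi^m x$ (or $\varphi^{2m}\xi_1$) survives and maps to the generator of $\thh^{\hfp}_{md+2}(\hfp\wdgz X,\hfp)\cong\F_p$. Via Propositions \ref{prop thh coh to thh ho} and \ref{prop thhchangeofbase} and the naturality in Theorem \ref{thm aq coh to thhcoh}, this says the non-trivial derivation $k_{md}$ pulls back along $\psi\co Z\to X$ to a non-trivial derivation — but the homotopy pullback square defining a Postnikov extension forces $k_{md}\circ\psi$ to be trivial. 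This contradiction is what rules out the square-zero ring, and there is no substitute for the explicit spectral-sequence computation, since it is precisely the ring structure on $\pi_*(\hfp\wdgz Z)$ that distinguishes the two cases.
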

\begin{proof}
    Due to Corollary \ref{cor aq cohomology groups of X} and Theorem \ref{thm pstnk dugger shipley}, there are only two type $(\fp,md)$ Postnikov extensions of $X$ up to equivalences of Postnikov extensions. This proves the first statement of the proposition. As discussed before the proposition, the trivial extension results in a homotopy ring that is not isomorphic to $\fp[x]/(x^{m+1})$. This shows that the second statement in the proposition implies the third statement. 
    
    Now we prove the second statement of the proposition. 
   Let $\psi \co Z\to X$ denote the unique non-trivial type $(\fp,md)$  Postnikov extension of $X$. This implies that $\pi_*Z\cong \pi_*X \oplus \Sigma^{md}\F_p$ as graded abelian groups. 

The Postnikov section map $\psi \co Z \to X$ is a map of $H\Z$-algebras. Therefore, the $md-2$ truncation of $\pi_*Z$ and $\pi_*X$ are isomorphic as rings. There are only two possible ring structures on $\pi_*Z$. One of them is $\F_p[x]/(x^{m+1})$ and the other one is the square zero extension $\F_p[x]/(x^m) \oplus \Sigma^{md}\F_p$. In order to prove the proposition, we need to show that $\pi_*Z \cong \F_p[x]/(x^{m+1})$ as a ring. We assume to the contrary that there is a ring isomorphism $\pi_*Z\cong \F_p[x]/(x^m) \oplus \Sigma^{md}\F_p$  and obtain a contradiction. 

The contradiction we obtain is the following. Let 
\[k_{md} \in  \text{Der}_{\hz}^{md+1}(X,H\F_p)\cong \fp\] denote a non-trivial derivation resulting in   the Postnikov extension $\psi$ and let $i$ denote the trivial derivation. This means that there is the following homotopy pullback square. 
\begin{equation*} 
 \begin{tikzcd}
 Z \arrow [r]
 \arrow[d,"\psi"]
 & X[0]
 \arrow[d,"i"]
 \\
 X \arrow[r,"k_{md}"]
 &
 X[0] \vee \Sigma^{md+1}H\F_p
 \end{tikzcd}
 \end{equation*}
This shows that the map $\psi \co Z\to X$ pulls back the non-trivial derivation $k_{md}$ to the trivial derivation. Assuming $\pi_*Z \cong \pi_*X \oplus \Sigma^{md}\F_p$ as a ring, we obtain a contradiction by showing that the derivation $\psi^* k_{md}$ given by $k_{md} \circ \psi$ is non-trivial. 

Now we formulate this statement using Hochschild homology. This is similar to the argument in the proof of Lemma \ref{lem first nontrivial differential}. By the naturality of the isomorphism in Theorem \ref{thm aq coh to thhcoh}, $k_{md} \circ \psi$ is nontrivial if  the map 
\[\F_p \cong \thh_{H\Z}^{md+2}(X,H\F_p) \to \thh_{H\Z}^{md+2}(Z,H\F_p)\]
induced by $\psi$ carries $1 \in \fp$ to a non-trivial element on the right hand side.
By     Propositions \ref{prop thh coh to thh ho} and   \ref{prop thhchangeofbase}, it is also sufficient to show that the map 
\begin{equation} \label{eq map to show nontrivial}
    \text{THH}^{H\F_p}_{md+2}(H\F_p \wedge_{H\Z} Z,H\F_p) \to \text{THH}^{H\F_p}_{md+2}(H\F_p \wedge_{H\Z} X,H\F_p)\cong \F_p 
\end{equation}
induced by $\psi$ is nontrivial, i.e.\ $1 \in \F_p$ is in the image of this map. 

Let $\widehat{E}$ denote the B\"okstedt spectral sequence computing $\text{THH}^{H\F_p}(H\F_p \wedge_{H\Z} Z,H\F_p)$ where $\hfp \wdgz Z$ acts on $\hfp$ through the composite map
\[\hfp \wdgz Z \xrightarrow{id \wdgz \psi} \hfp \wdgz X \xrightarrow{\upsilon} \hfp.\]
Here, $\upsilon$ is as in \eqref{eq def of upsilon}.  Recall that   $\overbar{E}$ denotes the spectral sequence calculating \[\text{THH}^{H\F_p}(H\F_p \wedge_{H\Z} X,H\F_p).\] With the composite map above, $\hfp \wdgz \psi$ becomes a map of $\hfp$-algebras over $\hfp$. Therefore, there is an induced map of spectral sequences 
\[\psi_* \co \widehat{E} \to \overbar{E}.\]

As before, we have different arguments for $p=2$ and for odd primes. We start with the $p=2$ case where we have $d=2$. By Lemma \ref{lemma homologyfor p eq 2}, we have: \[\pis(\hft \wdgz Z) \cong \F_2[\xi_1]/(\xi_1^{2m}) \otimes \F_p[z],\] 
for degree less than $2m+1$ where $\lv z \rv = 2m$. By Lemma \ref{prop internaldegreeagree}, this shows that 
\[\widehat{E}^2 \cong \hhfts(\F_2[\xi_1]/(\xi_1^{2m}) \otimes \F_2[z],\F_2) \]
for the internal degrees less than $2m+1$. Furthermore, we have 
\begin{equation*} \label{eq oylebirequation}
   \begin{split}
  &\hhfts(\F_2[\xi_1]/(\xi_1^{2m})\otimes \F_2[z],\F_2)\\
  &\cong \hhfts(\F_2[\xi_1]/(\xi_1^{2m}),\F_2) \otimes \hhfts(\F_2[z],\F_2)\\
  &\cong \Lambda_{\F_2}(\sigma \xi_1) \otimes \Gamma_{\F_2}(\varphi^{2m} \xi_1) \otimes \Lambda_{\F_2}(\sigma z)     
   \end{split} 
\end{equation*}
where the first isomorphism is the natural splitting in Proposition \ref{prop tensorsplitting} and the second isomorphism follows by Propositions \ref{prop tor of quot of polynomial} and \ref{prop tor of freecommutative}. 

 By Lemma \ref{lemma homologyfor p eq 2}, the restriction of the map $\pis(\hft \wdgz Z) \to \pis(\hft \wdgz X)$ to degrees less than $2m+1$ is given by the same restriction of the map \[\F_2[\xi_1]/(\xi_1^{2m}) \otimes \F_2[z] \to \F_2[\xi_1]/(\xi_1^{2m})\] that carries $\xi_1$ to $\xi_1$ and $z$ to $0$. This is $id \otimes \epsilon$ where $id$ is the identity map and $\epsilon$ denotes the augmentation map $\F_2[z] \to \F_2$. By the naturality of the tensor splitting in Proposition \ref{prop tensorsplitting}, we obtain that after restricting to internal degrees less than $2m+1$, the map 
\[\psi_2 \co  \widehat{E}^2 \cong \Lambda_{\F_2}(\sigma \xi_1) \otimes \Gamma_{\F_2}(\varphi^{2m} \xi_1) \otimes \Lambda_{\F_2}(\sigma z) \to \overbar{E}^2 \cong \Lambda_{\F_2}(\sigma \xi_1) \otimes \Gamma_{\F_2}(\varphi^{2m} \xi_1)\]
 is given by $id \otimes \epsilon$ where $id$ is the identity map on $\Lambda_{\F_2}(\sigma \xi_1) \otimes \Gamma_{\F_2}(\varphi^{2m} \xi_1)$ and $\epsilon$  denotes the augmentation map  $\Lambda_{\F_2}(\sigma z) \to \F_2$. We obtain that 
 \begin{equation}\label{eq pis2 psi}
     \psi_2(\varphi^{2m} \xi_1) = \varphi^{2m} \xi_1.
 \end{equation}
 
  As mentioned in the proof of Proposition \ref{prop HHofX}, all the differentials are trivial on $\overbar{E}^r$ for $r\geq 2$ due to degree reasons. Furthermore, $\varphi^{2m} \xi_1 \in \overbar{E}^2$ represents the non-trivial element on the right hand side of \eqref{eq map to show nontrivial}. Since it has homological degree $2$, $\varphi^{2m} \xi_1$ in $\widehat{E}^2$ survives to the $\widehat{E}^\infty$ page, i.e.\ $\hat{d}^r \varphi^{2m} \xi_1=0$ for every $r \geq 2$. This shows that $\varphi^{2m} \xi_1 \in \widehat{E}^2$ survives to the $\widehat{E}^\infty$ page and due to \eqref{eq pis2 psi}, it hits a non-trivial element on the right hand side of \eqref{eq map to show nontrivial}. This finishes the proof of the proposition for $p=2$.

Let $p$ be an odd prime. By Lemma \ref{lemma homology}, there is an isomorphism of rings $\pis(\hfp \wdgz Z) \cong \lambdatau \otimes \pi_*Z$. Therefore, we have 
\[\pis(\hfp \wdgz Z)\cong \lambdatau \otimes (\F_p[x]/(x^m) \oplus \Sigma^{md}\F_p) \cong \lambdatau \otimes \F_p[x]/(x^m) \otimes \F_p[z]\]
for degrees less than $md+1$ where $\lv z \rv = md$. By Lemma \ref{prop internaldegreeagree}, we have 
\begin{equation*}
    \begin{split}
        \widehat{E}^2 &\cong \hhfps(\lambdatau \otimes \F_p[x]/(x^m) \otimes \F_p[z],\F_p)
    \end{split}
\end{equation*}
for internal degrees less than $md+1$. As before, we obtain
\begin{equation} \label{eq oyle iki denklem}
    \begin{split}
        &\hhfps(\lambdatau \otimes \F_p[x]/(x^m) \otimes \F_p[z],\F_p)\\
        & \cong \hhfps(\lambdatau,\F_p) \otimes\hhfps(\F_p[x]/(x^m) ,\F_p) \otimes \hhfps(\F_p[z],\F_p)\\
        &\cong \Gamma_{\F_p}(\sigma \tau_0) \otimes \Lambda_{\F_p}(\sigma x) \otimes \Gamma_{\F_p}(\varphi^m x) \otimes \Lambda_{\F_p}(\sigma z).
    \end{split}
\end{equation}
 As in Lemmas \ref{lemma e2 page of the bokstedt of Y} and \ref{lem bokstedt ss for X}, we have 
\begin{equation} \label{eq oyle uc denklem}
\begin{split}
\overbar{E}^2 &\cong \hhfps(\lambdatau \otimes \F_p[x]/(x^m),\F_p)\\
 & \cong \hhfps(\lambdatau,\F_p) \otimes \hhfps(\F_p[x]/(x^m) ,\F_p)\\
&\cong \Gamma_{\F_p}(\sigma \tau_0) \otimes \Lambda_{\F_p}(\sigma x) \otimes \Gamma_{\F_p}(\varphi^m x).\\
\end{split}
\end{equation}

The Postnikov section map $\psi \co Z\to X$ induce the map 
\begin{equation*}
\begin{split}
    id \otimes \pis \psi \co \pis(\hfp \wdgz Z) \cong \lambdatau &\otimes (\F_p[x]/(x^m) \oplus \Sigma^{md}\F_p) \to \\ &\pis(\hfp \wdgz X) \cong \lambdatau \otimes \F_p[x]/(x^m)
    \end{split}
\end{equation*} 
by Lemma \ref{lemma homology}. Restricting to degrees less than $md +1$, this corresponds to the map 
\[id \otimes \epsilon\co \lambdatau \otimes \F_p[x]/(x^m) \otimes \F_p[z] \to \lambdatau \otimes \F_p[x]/(x^m)\]  where $id$ is the identity map of $\lambdatau \otimes \F_p[x]/(x^m)$ and $\epsilon$ denotes the augmentation map $\F_p[z] \to \F_p$. Recall that the tensor splittings in Equations \eqref{eq oyle iki denklem} and \eqref{eq oyle uc denklem} are natural.  Therefore, the restriction of the map $\psi_2 \co \widehat{E}^2 \to \overbar{E}^2$ to internal degrees less than $md+1$ is given by $id \otimes \epsilon^\prime$ where $id$ is the identity map on $\Gamma_{\F_p}(\sigma \tau_0) \otimes \Lambda_{\F_p}(\sigma x) \otimes \Gamma_{\F_p}(\varphi^m x)$ and $\epsilon^\prime$ denotes the augmentation map  $\Lambda_{\F_p}(\sigma z) \to \F_p$ induced by $\epsilon$. 
In particular, the map $\psi_2\co \widehat{E}^2 \to \overbar{E}^2$ satisfies 
\begin{equation}\label{eq p odd psi}
    \psi_2(\varphi^m x) = \varphi^m x.
\end{equation} 
Due to Proposition \ref{prop HHofX},  $\varphi^m x \in \overbar{E}^2$ represents a non-trivial element on the right hand side of \eqref{eq map to show nontrivial}. Since $\varphi^m x \in \widehat{E}^2$ has homological degree $2$, all the differentials are trivial on $\varphi^m x$ and therefore it survives to the $\widehat{E}^\infty$ page. Due to \eqref{eq p odd psi}, we deduce that the  element on the left hand side of \eqref{eq map to show nontrivial} represented by $\varphi^m x \in \widehat{E}^2$  hits a non-trivial element on the right hand side of \eqref{eq map to show nontrivial}. This provides the contradiction we needed.
\end{proof}

\subsection{Proof of Theorem \ref{thm nontrivial}}

We start with the following construction which provides the first example of a non-formal DGA with homology $\fp[x]$. Recall that for this section, we have $d = \lv x \rv = 2p-2$.

\begin{construction}\label{const nonformal dga}
The $m=2$ case of Proposition \ref{prop inductivestep} states that the $\hz$-algebra $Y$ has a unique type $(\fp,2d)$ Postnikov extension $Z_3$ with homotopy ring $\fp[x]/(x^3)$. In other words, there is a map of $\hz$-algebras
\[Z_3 \to Y\]
inducing the map $\fp[x]/(x^3) \to \fp[x]/(x^2)$ that carries $x$ to $x$ in homotopy. Applying Proposition \ref{prop inductivestep} inductively, on obtains a tower of $\hz$-algebras
\[\cdots \to Z_4  \to Z_3 \to Y \to \hfp\]
where $\pis Z_m \cong \fp[x]/(x^{m})$ and the structure maps induce the ring maps that carry $x$ to $x$ in homotopy (except the last map). Taking the homotopy limit of this tower provides an $\hz$-algebra $Z$ with homotopy ring $\fp[x]$. By construction, the $d$ Postnikov section of $Z$ is $Y$. Since  the $d$ Postnikov section of $Z$ corresponds to a non-formal DGA, we conclude that the DGA corresponding to $Z$ is not formal. 
\end{construction}

For the convenience of the reader, we provide a restatement of Theorem \ref{thm nontrivial}. 
\vspace{0.3cm}

\begin{thmn}[\ref{thm nontrivial}]  There is a unique DGA whose homology is  $\F_p[x_{2p-2} ]$ (with $\lv x_{2p-2} \rv = 2p-2$) and whose $2p-2$ Postnikov section is non-formal. Furthermore for  every $m>1$, there is a unique DGA whose  homology is $\F_p[x_{2p-2} ]/(x_{2p-2}^m)$ and whose  $2p-2$ Postnikov section is non-formal. 

\end{thmn}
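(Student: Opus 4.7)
The plan is to establish uniqueness by induction on $m$ using the classification of Postnikov extensions from Proposition \ref{prop inductivestep}, and then to handle the polynomial case by passing to the homotopy limit. Existence has already been provided by Construction \ref{const nonformal dga}, so only uniqueness remains. The base case $m = 2$ is immediate from Example \ref{ex duggershipley}: a DGA with homology $\F_p[x]/(x^2) = \Lambda_{\F_p}(x)$ is concentrated in degrees $0$ and $d = 2p-2$, so it coincides with its own $d$-Postnikov section, and the unique non-formal such DGA is $Y$.

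For the inductive step, I would assume uniqueness for $\F_p[x]/(x^m)$ and let $W$ be a DGA with homology $\F_p[x]/(x^{m+1})$ and non-formal $d$-Postnikov section. Since $d\geq 2$, the homology vanishes in degrees strictly between $(m-1)d$ and $md$, so the Postnikov section map $W \to W[(m-1)d]$ fits the definition of a type $(\F_p, md)$ Postnikov extension. The target $W[(m-1)d]$ has homology $\F_p[x]/(x^m)$ and inherits the non-formal $d$-Postnikov section from $W$, so by the inductive hypothesis it is equivalent to $Z_m$. By Proposition \ref{prop inductivestep} the non-trivial type $(\F_p, md)$ Postnikov extension of $Z_m$ is unique, and its homotopy ring is $\F_p[x]/(x^{m+1})$; the trivial extension has homotopy ring $\F_p[x]/(x^m) \oplus \Sigma^{md}\F_p$, which is not isomorphic to $\F_p[x]/(x^{m+1})$ as a ring. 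Hence $W$ must be the non-trivial extension, giving $W \simeq Z_{m+1}$.

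For the polynomial case, let $W$ be a DGA with homology $\F_p[x_{2p-2}]$ and non-formal $d$-Postnikov section. Each truncation $W[md]$ has homology $\F_p[x]/(x^{m+1})$ and the same $d$-Postnikov section as $W$, so by the truncated case it is equivalent to $Z_{m+1}$. These equivalences assemble into a map from the Postnikov tower of $W$ to the tower $\cdots \to Z_4 \to Z_3 \to Y$, and since $W$ is connective it is equivalent to $\mathrm{holim}_m W[md] \simeq \mathrm{holim}_m Z_m = Z$.

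The substantive content underlying the argument has already been carried out in Proposition \ref{prop inductivestep} via the B\"okstedt spectral sequence, so the main step of this proof is essentially bookkeeping. The one subtlety I expect to address explicitly is translating uniqueness of a Postnikov extension (an equivalence of maps $W \to Z_m$) into uniqueness of the underlying DGA; this is immediate since an equivalence of extensions is in particular a weak equivalence of DGAs over $Z_m$, and the $\mathrm{Aut}(\F_p)$-quotient appearing in Theorem \ref{thm pstnk dugger shipley} absorbs any ambiguity in the chosen identification $W[(m-1)d] \simeq Z_m$.
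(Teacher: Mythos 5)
Your overall strategy matches the paper's---existence via Construction \ref{const nonformal dga}, uniqueness via Postnikov induction powered by Proposition \ref{prop inductivestep}---and the truncated case is set up correctly. The gap is in the polynomial case, at the sentence ``These equivalences assemble into a map from the Postnikov tower of $W$ to the tower $\cdots\to Z_4\to Z_3\to Y$.'' Applying the truncated uniqueness statement to each $W[md]$ separately only produces, for each $m$, \emph{some} equivalence $W[md]\simeq Z_{m+1}$; nothing in your inductive hypothesis records that this equivalence is compatible with the structure maps $W[md]\to W[(m-1)d]$ and $Z_{m+1}\to Z_m$. Uniqueness is a statement about isomorphism classes of $\hz$-algebras, not about the maps between them, so it does not by itself yield a map of towers. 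And even a ladder of equivalences that commutes only up to homotopy would not be enough: to compare $\mathrm{holim}_m W[md]$ with $\mathrm{holim}_m Z_m$ you need a strictly commuting diagram of weak equivalences after suitable cofibrant/fibrant replacements, which is a separate rectification issue you have not addressed.

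The paper handles both points at once by proving the polynomial case first and working model-categorically. It chooses a Postnikov tower of $Z$ with q-cofibrant stages and q-fibration structure maps, and at each level views $Z[(n+1)d]$ and $T[(n+1)d]$ as objects of the model category of $\hz$-algebras over $T[nd]$, transported along the already-constructed $f_n$. Proposition \ref{prop inductivestep} then gives that these two are equivalent as type $(\F_p,(n+1)d)$ Postnikov extensions of $T[nd]$, i.e.\ in this slice category; cofibrancy of $Z[(n+1)d]$ and fibrancy of $T[(n+1)d]$ in the slice produce a genuine weak equivalence $f_{n+1}$ commuting strictly with the structure maps, and the resulting strict ladder induces the equivalence on homotopy limits. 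To repair your argument you would need to carry this over-category bookkeeping through the induction---building compatible equivalences rather than invoking truncated uniqueness level by level---which in effect reorders your proof into the paper's. The truncated statement then falls out as the finite-stage version, as the paper notes.
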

\vspace{0.3cm}

 Theorem \ref{thm nontrivial} can also be stated as follows. There is a unique $\hz$-algebra whose homotopy ring is  $\F_p[x]$  and whose $2p-2$ Postnikov section is equivalent to $Y$. Furthermore for  every $m>1$, there is a unique $\hz$-algebra whose  homotopy ring is $\F_p[x]/(x^m)$ and whose  $2p-2$ Postnikov section is equivalent to $Y$.

\begin{proof}
 We start with the first statement. Construction \ref{const nonformal dga} provides an $\hz$-algebra $Z$ with homotopy ring $\fp[x]$ and $d$ Postnikov section equivalent to $Y$. As in page 296 of \cite{basterra2013BP}, we can choose a Postnikov tower of $Z$ 
 \[\cdots \to Z[2d]\to Z[d] \to Z[0]\]
 where each $Z[nd]$ is q-cofibrant as an $\hz$-algebra and the structure maps are q-fibrations. Note that we skip the Postnikov sections between $Z[nd]$ and $Z[(n+1)d]$ because  $\pis(Z)$ is trivial between degrees $nd$ and $(n+1)d$. We also have $Z[d] \simeq Y$.

Let $T$ be an $\hz$-algebra with homotopy ring $\fp[x]$ and $d$ Postnikov section equivalent to $Y$. In order to finish the proof of the first statement in the theorem, we need to show that $T$ is weakly equivalent to $Z$ as an $\hz$-algebra. As before, there is a Postnikov tower of $T$ 
\[\cdots \to T[2d] \to T[d] \to T[0]\]
where the structure maps are q-fibrations. We  construct a weak equivalence between the Postnikov towers of $T$ and $Z$. In other words, we construct weak equivalences $f_i \co Z[id] \we T[id]$ of $\hz$-algebras that make the following diagram commute.
\begin{center}
\begin{tikzcd}
\cdots \ar[r]& \ar[r]\ar[d,swap,"\simeq"]\ar[d,"f_2"] Z[2d] & Z[d]\ar[d,swap,"\simeq"]\ar[d,"f_1"]\ar[r]& \ar[d,swap,"\simeq"]\ar[d,"f_0"]  Z[0]\\
\cdots\ar[r] & T[2d] \ar[r] & T[d] \ar[r] & T[0]
\end{tikzcd}
\end{center}

For the base case, note that $Z[0] \simeq \hfp \simeq T[0]$. Since $Z[0]$ is q-cofibrant and $T[0]$ is q-fibrant, we obtain a weak equivalence $f_0$ of $\hz$-algebras.

We consider $Z[d]$ as an $\hz$-algebra over $T[0]$ through the composite map 
\[Z[d] \to Z[0] \xrightarrow{f_0}T[0].\]
This composite map and the map $T[d] \to T[0]$ are Postnikov extensions of $T[0] \simeq \hfp$ of type $(\fp,d)$ in $\hz$-algebras. Up to weak equivalences of Postnikov extensions, there are only two type $(\fp,d)$ Postnikov extensions of $\hfp$ in $\hz$-algebras \cite[3.15]{dugger2007topological}. Furthermore, the trivial extension provides an $\hz$-algebra that is not weakly equivalence to $Y$ \cite[3.15]{dugger2007topological}. Since  $Z[d]\simeq Y \simeq T[d]$, we deduce that the Postnikov extensions  $T[d] \to T[0]$ and $Z[d] \to T[0]$ are weakly equivalent as  type $(\fp,d)$ Postnikov extensions of $T[0]\simeq \hfp$. In particular, $T[d]$ and $Z[d]$ are weakly equivalent as $\hz$-algebras over $T[0]$. 

The category of $\hz$-algebras over $T[0]$ carries a model structure where the  fibrations, cofibrations and weak equivalence are precisely the morphisms that forget to fibrations, cofibrations and weak equivalences of $\hz$-algebras respectively. In particular, $T[d]$ is fibrant as an $\hz$-algebra over $T[0]$ and $Z[d]$ is cofibrant as an $\hz$-algebra over $T[0]$. Therefore, there is a weak equivalence $f_1\co Z[d] \we T[d]$ of $\hz$-algebras over $T[d]$. This provides the map $f_1$ we needed.

We do induction through the Postnikov towers. For a given $n\geq 1$, assume that there are weak equivalences $f_\ell\co Z[\ell d] \we T[\ell d]$ for every $\ell \leq n$ that commute with the structure maps. We consider $Z[(n+1)d]$ as an $\hz$-algebra over $T[nd]$ through the composite 
\[Z[(n+1)d] \to Z[nd] \we T[nd].\]
 The maps $Z[(n+1)d] \to T[nd]$ and $T[(n+1)d]\to T[nd]$ are Postnikov extensions of $T[nd]$ of type $(\fp,(n+1)d)$. Furthermore, these are non-trivial Postnikov extensions as their homotopy rings are given by $\fp[x]/(x^{n+2})$. Since $T[nd]$ satisfies the hypothesis of Proposition \ref{prop inductivestep},  there is a unique non-trivial type $(\fp,(n+1)d)$ Postnikov extension of $T[nd]$. In particular, $Z[(n+1)d]\to T[nd]$ and $T[(n+1)d]\to T[nd]$ are equivalent as type $(\fp,(n+1)d)$ Postnikov extensions of $T[nd]$. In particular, $Z[(n+1)d]$ and $T[(n+1)d]$  are weakly equivalent as $\hz$-algebras over $T[nd]$. Since $Z[(n+1)d]$ is cofibrant and $T[(n+1)d]$ is fibrant in the model category of $\hz$-algebras over $T[nd]$, we obtain the desired weak equivalence $f_{n+1}$ that make the relevant structure maps commute. 

This finishes the inductive process and we obtain a weak equivalence between the Postnikov towers of $Z$ and $T$. Since $Z$ and $T$ are equivalent to the homotopy limits of their respective Postnikov towers, we obtain that the $\hz$-algebras $T$ and $Z$ are weakly equivalent as desired. This finishes the proof of the first statement of the theorem.

The proof of the second statement of the theorem follows in a similar manner. In Construction \ref{const nonformal dga}, we show that for every $m>1$, there  exists an $\hz$-algebra $Z_m$ with homotopy ring $\fp[x]/(x^{m})$ and $d$ Postnikov section equivalent to $Y$. In this case, the Postnikov tower of $Z_m$ is bounded above. The proof of the uniqueness statement  follows by the same arguments as before except that it involves Postnikov towers with finitely many stages.
 
\end{proof}
\subsection{Proof of Theorem \ref{thm hochschild}} 
We start with a restatement of Theorem \ref{thm hochschild}.
\vspace{0.3 cm}
\begin{thmn}[\ref{thm hochschild}]
Let $X$ be the non-formal DGA with homology $\F_p[x_{2p-2} ]$ given in Theorem \ref{thm nontrivial}. We have  \[\textup{HH}^{\Z}_*(X,\F_p) = \F_p[\mu]/(\mu^p) \ \text{with} \ \lvert \mu \rvert = 2.\]
\end{thmn}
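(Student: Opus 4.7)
The plan is to compute $\textup{HH}^{\Z}_*(X,\F_p) \cong \textup{THH}^{\hz}_*(X,\hfp)$ by running the B\"okstedt spectral sequence after a change-of-base, mimicking the strategy used for the truncated polynomial case in Proposition \ref{prop HHofX}. First, I would invoke Proposition \ref{prop thhchangeofbase} to reduce to computing
\[\textup{THH}^{\hfp}_*(\hfp\wdgz X, \hfp),\]
where $\hfp \wdgz X$ acts on $\hfp$ through the composite analogous to $\upsilon$ in \eqref{eq def of upsilon}. By the analogue of Lemma \ref{lemma homology} (respectively Lemma \ref{lemma homologyfor p eq 2}), one has $\pis(\hfp \wdgz X)\cong \lambdatau\otimes \fp[x]$ for odd $p$ and $\pis(\hft\wdgz X)\cong \f_2[\xi_1]$ with $\lv \xi_1\rv=1$ for $p=2$; this is exactly as in Proposition \ref{prop HHofX}, only with the truncated polynomial factor replaced by the full polynomial algebra.

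Combining Propositions \ref{prop tensorsplitting} and \ref{prop tor of freecommutative}, the $E^2$-page of the B\"okstedt spectral sequence $\overbar{E}$ is
\[\overbar{E}^2 \cong \Gamma_{\fp}(\sigma\tau_0)\otimes \Lambda_{\fp}(\sigma x)\quad\text{(odd $p$)}, \qquad \overbar{E}^2\cong \Lambda_{\f_2}(\sigma\xi_1)\quad (p=2).\]
Note crucially that the $\Gamma_{\fp}(\varphi^m x)$-factor present in Proposition \ref{prop HHofX} disappears here since $\fp[x]$ is a free graded commutative algebra. For $p=2$ the spectral sequence collapses at $E^2$ by degree, giving $\f_2\{1,\sigma\xi_1\}\cong \f_2[\mu]/(\mu^2)$ with $\lv\mu\rv=2$, which is the desired answer.

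For odd $p$, I would deduce the non-trivial differentials by comparison with the spectral sequence $E$ of Lemma \ref{lemma e2 page of the bokstedt of Y} via the $d$-Postnikov section map $\psi\co X\to Y$, exactly as in the proof of Lemma \ref{lem bokstedt ss for X}. The induced map $\overbar{E}^2 \to E^2$ is $id\otimes f$, where $f\co \Lambda_{\fp}(\sigma x)\hookrightarrow \Lambda_{\fp}(\sigma x)\otimes \Gamma_{\fp}(\varphi^2 x)$ is the canonical inclusion (by naturality of the tensor splitting in Proposition \ref{prop tensorsplitting} applied to $\pis \psi\co \fp[x]\to \Lambda_{\fp}(x)$, $x\mapsto x$). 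Thus the map $\overbar{E}^2\to E^2$ is injective on $\Gamma_{\fp}(\sigma\tau_0)\otimes\Lambda_{\fp}(\sigma x)$, and Lemma \ref{lemma differentials for Y} transfers verbatim to give
\[\overbar{d}^{p-1}\gamma_k(\sigma\tau_0) = \gamma_{k-p}(\sigma\tau_0)\sigma x \quad \text{for every } k\geq p,\]
together with vanishing of $\overbar{d}^r$ for $2\leq r<p-1$ on the $\Gamma(\sigma\tau_0)\otimes\Lambda(\sigma x)$-part by the same degree argument as in Lemma \ref{lemma e2 page of the bokstedt of Y}.

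Finally, I would read off the $\overbar{E}^\infty$-page: on $\overbar{E}^{p-1}$ the differential $\overbar{d}^{p-1}$ kills every $\gamma_k(\sigma\tau_0)$ with $k\geq p$ (as source) and every $\gamma_k(\sigma\tau_0)\sigma x$ with $k\geq 0$ (as image), and all higher differentials vanish by homological-degree reasons. What remains is the $\fp$-module spanned by $\{\gamma_0(\sigma\tau_0),\dots,\gamma_{p-1}(\sigma\tau_0)\}$, concentrated in total degrees $0,2,\dots,2(p-1)$, which is the underlying graded module of $\fp[\mu]/(\mu^p)$ with $\lv\mu\rv=2$. Since $\overbar{E}^\infty$ has at most one non-zero class in each total degree, no extension problem can alter the answer, and we conclude $\textup{HH}^{\Z}_*(X,\fp)\cong \fp[\mu]/(\mu^p)$. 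The main obstacle I expect is the careful bookkeeping needed to rule out differentials on the surviving $\gamma_k(\sigma\tau_0)$ for $k<p$ and to verify the comparison map at the $\overbar{E}^{p-1}$-page; both follow from the same degree considerations that made Lemmas \ref{lemma e2 page of the bokstedt of Y} and \ref{lem bokstedt ss for X} work, so there is no genuine new difficulty beyond recording that the $\varphi^m x$-factor of Proposition \ref{prop HHofX} is now absent.
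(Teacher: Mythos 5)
Your proof is correct and follows essentially the same route as the paper: after the change of base via Proposition \ref{prop thhchangeofbase}, you identify $\overbar{E}^2$ using Lemmas \ref{lemma homology} and \ref{lemma homologyfor p eq 2} together with Propositions \ref{prop tensorsplitting} and \ref{prop tor of freecommutative}, and you import the $d^{p-1}$-differentials from Lemma \ref{lemma differentials for Y} by comparing with the spectral sequence for $\hfp \wdgz Y$ through the Postnikov section map $X\to Y$ (arguing as in Lemma \ref{lem bokstedt ss for X}). This is exactly the paper's argument, including the separate collapse-by-degree observation for $p=2$ and the degree bookkeeping that rules out other differentials.
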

\vspace{0.3 cm}

\begin{proof}
   There are isomorphisms
   \[\hh^{\Z}_*(X, \hfp) \cong \thh^{\hz}_*(X,\hfp) \cong \thh^{\hfp}_*(\hfp \wdgz X, \hfp).\]
   Note that we abuse notation and denote the $\hz$-algebra corresponding to $X$ also by $X$. As usual, we compute the right hand side by using the B\"okstedt spectral sequence. 
   
   We start with the case $p=2$. By Lemma \ref{lemma homologyfor p eq 2}, we have $\pis(\hft \wdgz X) \cong \F_2[\xi_1]$ with $\lv \xi_1 \rv =1$. This shows that the second page of the spectral sequence calculating $\text{THH}^{H\F_2}(H\F_2 \wdgz X,H\F_2)$ is given by  $\hhfts(\F_2[\xi_1],\F_2)\cong \Lambda_{\F_2}(\sigma \xi_1)$ due to  Proposition \ref{prop tor of freecommutative}. All the differentials are trivial by degree reasons and this proves the lemma for $p=2$, i.e.\ we have
\[\hhfts(H\F_2 \wdgz X,\F_2) \cong \F_2[\mu]/(\mu^2) \text{\ with } \lv \mu \rv =2.\]

Let $p$ be an odd prime.  By Lemma \ref{lemma homology}, we have \[\pis(\hfp \wdgz X) \cong \lambdatau \otimes  \F_p[x].\] 
It follows by Propositions \ref{prop tensorsplitting} and \ref{prop tor of freecommutative}  that the spectral sequence calculating \[\text{THH}^{\hfp}(\hfp \wdgz X,H\F_p)\] is given by
\[\overbar{E}^2 \cong \hhfps(\lambdatau \otimes  \F_p[x],\F_p) \cong \Gamma_{\F_p}(\sigma \tau_0) \otimes \Lambda_{\F_p}(\sigma x).\]
Since the $2p-2$ Postnikov section of $X$ is non-formal, and since there is a unique non-formal DGA with homology $\lambdafp(x)$, we deduce that the $2p-2$ Postnikov section of $X$ is equivalent to $Y$. Using the Postnikov section map $X\to Y$ and arguing as in the proof of Lemma \ref{lem bokstedt ss for X}, one sees that there is a map of spectral sequences 
\[\overbar{E}^2\cong \Gamma_{\F_p}(\sigma \tau_0) \otimes \Lambda_{\F_p}(\sigma x) \to E^2 \cong \Gamma_{\F_p}(\sigma \tau_0) \otimes \Lambda_{\F_p}(\sigma x) \otimes \Gamma_{\F_p}(\varphi^2x)\]
given by the canonical inclusion where $E$ denotes the spectral sequence in Lemma \ref{lemma e2 page of the bokstedt of Y}.

Using this map and Lemma \ref{lemma e2 page of the bokstedt of Y}, we deduce that $\overbar{d}_r = 0$ for $1<r<p-1$. Furthermore, Lemma \ref{lemma differentials for Y} implies  that all the non-trivial differentials of $\overbar{E}^{p-1}$ are given by 
\[\overbar{d}^{p-1}\gamma_k(\sigma \tau_0)= \gamma_{k-p}(\sigma \tau_0)\sigma x.\]
where $k \geq p$. This shows that the $\overbar{E}^{\infty}$ page is given by the truncation of $\Gamma_{\F_p}[\sigma \tau_0]$ above total degree $2p-2$. This proves that 
\[\thh^{\hfp}_*(\hfp \wdgz X, \hfp) \cong \fp[\mu]/(\mu^p)\]
as desired.

  \end{proof}

\begin{remark}\label{rmk hh of formal}
It is interesting to compare this with the Hochschild homology groups of the formal DGA $Z$ with homology $\fp[x_{2p-2}]$.  Since $Z$ is an $\fp$-DGA, 
\[\fp \otimes^{\mathbb{L}}_{\z}Z \cong (\fp \otimes^{\mathbb{L}}_{\z} \fp) \otimes^{\mathbb{L}}_{\fp} Z.\]
The homology of the $\fp$-DGA $\fp \otimes^{\mathbb{L}}_{\z} \fp$ is an exterior algebra with a single generator in degree $1$. A straightforward Andr\'e--Quillen cohomology computation shows that there is a unique $\fp$-DGA with such homology, see Theorem \ref{thm pstnk dugger shipley}. We deduce that  the $\fp$-DGA $\fp \otimes^{\mathbb{L}}_{\z} \fp$ is formal. This shows that $\fp \otimes^{\mathbb{L}}_{\z} Z$ is also formal. In particular,  all the differentials in the  B\"okstedt spectral sequence computing $\text{HH}^{\fp}_*(\fp \otimes^{\mathbb{L}}_{\z} Z, \F_p)$ are trivial. We obtain
\[\text{HH}^{\Z}_*(Z, \F_p) \cong \text{HH}^{\fp}_*(\fp \otimes^{\mathbb{L}}_{\z} Z, \F_p) \cong  \Gamma(y) \otimes \Lambda(z)\] where  $\lvert y \rvert =2$ and $\lvert z \rvert = 2p-1$.
\end{remark}
 
\section{Homology calculations} \label{sec homology lemmas}
 To calculate the second page of the B\"okstedt spectral sequence  in various cases, we need to know the ring structure on  $\pi_*(H\F_p \wedge_{H\Z} X)$ where $X$ denotes an $H\Z$-algebra of interest. 
 
 We make use of the Tor spectral sequence of \cite[\rom{4}.4.1]{elmendorf2007rings}.
 Let $R$ be a commutative $\Sp$-algebra and let $K$ and $L$ be $R$-modules. For a spectrum $E$ and a map $\phi$ of spectra, we denote $\pi_*E$ and $\pis \phi$ by $E_*$ and $\phi_*$ respectively. The Tor spectral sequence is given by the following.
\begin{equation*}\label{eq tor spectral sequence}
E^2_{s,t} = \torup^{R_*}_{s,t}(K_*,L_*) \Longrightarrow \pi_{s+t}(K \wedge_{R}L)
\end{equation*}
Note that throughout this section, all the  smash products are assumed to be the corresponding derived smash products.
 \begin{lemma} \label{lemma module homology}
 Let $E$ and $F$ be $H\Z$-modules whose homotopy groups are $\F_p$-modules. The second page of the Tor spectral sequence calculating  $\pi_*(E \whz F)$ is given by 
 \[E^2 \cong \Lambda_{\F_p}(z) \otimes \pi_*E \otimes \pi_*F\]
 where $\text{deg}(z) = (1,0)$. Furthermore, $d^r=0$ for $r \geq 2$.
 
 If $\pi_*E$ and $\pi_*F$ are concentrated in degrees that are multiples of some $d \geq 2$, then there is an isomorphism of graded abelian groups
 \[\pi_*(E \whz F)\cong \Lambda_{\F_p}(z) \otimes \pi_*E \otimes \pi_*F\]
 where $\lv z \rv = 1$. This isomorphism is natural in the following sense. Given $\hz$-module maps $f \co E \to E'$ and $g \co F \to F'$ where $E'_*$ and $F'_*$ are graded $\fp$-modules concentrated in degrees that are multiples of $d$, then the induced map $\pis(f \wdgz g)$ is given by $id \otimes f_* \otimes g_*$ where $id$ is the identity map of $\Lambda_{\F_p}(z)$.
 \end{lemma}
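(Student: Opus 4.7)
The plan is to apply the Tor spectral sequence stated just before the lemma, in the case $R = H\Z$, $K=E$, $L=F$. Since $\Z_* = \Z$ concentrated in degree $0$, and since both $E_*$ and $F_*$ are $\F_p$-modules, the spectral sequence reads
\[ E^2_{s,t} = \textup{Tor}^{\Z}_{s,t}(E_*,F_*) \Longrightarrow \pi_{s+t}(E\wdgz F). \]
The first step is to compute this $\textup{Tor}$. I would use the length-one free $\Z$-resolution
\[ 0 \to \Z \xrightarrow{\,\cdot p\,} \Z \to \F_p \to 0, \]
which, after writing the $\F_p$-vector space $E_*$ as a direct sum of shifted copies of $\F_p$, lifts to a length-one free $\Z$-resolution of $E_*$ in graded $\Z$-modules. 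Tensoring this resolution with $F_*$ over $\Z$ kills $p$ since $F_*$ is an $\F_p$-module, so every differential becomes zero and one reads off
\[ \textup{Tor}^{\Z}_0(E_*,F_*) \;\cong\; E_*\otimes_{\F_p}F_*, \qquad \textup{Tor}^{\Z}_1(E_*,F_*) \;\cong\; E_*\otimes_{\F_p}F_*, \]
with $\textup{Tor}^{\Z}_s = 0$ for $s\geq 2$. Repackaging, $E^2 \cong \Lambda_{\F_p}(z)\otimes E_*\otimes F_*$ with $\textup{deg}(z)=(1,0)$, as claimed.

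The vanishing of $d^r$ for $r\geq 2$ is immediate: the $E^2$-page is concentrated in the two columns $s=0$ and $s=1$, while the differential $d^r$ lowers $s$ by $r$, so it lands in negative columns for every $r\geq 2$. Hence $E^\infty = E^2$.

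For the additive identification of $\pi_*(E\wdgz F)$, I only need to rule out extension problems. The $s=0$ line contributes classes in total degrees that are sums of the degrees appearing in $E_*$ and $F_*$, and by hypothesis these are all multiples of $d$; the $s=1$ line contributes classes in total degrees of the form (multiple of $d$)$\,+\,1$. Since $d\geq 2$, the congruence $md \equiv m'd+1 \pmod{\textup{anything}}$ has no integer solutions, so the two sets of total degrees are disjoint and the filtration splits as graded abelian groups. This yields $\pi_*(E\wdgz F)\cong \Lambda_{\F_p}(z)\otimes E_*\otimes F_*$ with $\lv z\rv = 1$.

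Finally, naturality follows formally: an $H\Z$-module map $f\wdgz g\co E\wdgz F \to E'\wdgz F'$ induces a morphism of Tor spectral sequences, and on $E^2$ this is the standard functoriality of Tor, namely $f_*\otimes g_*\co \textup{Tor}^{\Z}(E_*,F_*) \to \textup{Tor}^{\Z}(E'_*,F'_*)$; under the splitting above this is exactly $id\otimes f_*\otimes g_*$. I do not anticipate a genuine obstacle here; the only delicate point is checking that the additive splitting in the second claim really requires $d\geq 2$, which the degree argument above handles cleanly.
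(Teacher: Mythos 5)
Your proof is correct and follows essentially the same route as the paper's: the same length-one resolution of $\pi_*E$ over $\Z$, the same observation that tensoring with the $\F_p$-module $\pi_*F$ kills the differentials, the same two-column vanishing of higher differentials, and the same parity-style degree argument (classes on $s=0$ have total degree a multiple of $d$, classes on $s=1$ have total degree $\equiv 1 \pmod d$, and these are disjoint for $d\geq 2$) to rule out extension problems, with naturality read off from the functoriality of the Tor spectral sequence plus the absence of extensions.
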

 
 \begin{proof}
  We have the following standard resolution of $\F_p$ as a $\Z$-module
  \[ \cdots \to 0 \to \Z \xrightarrow{.p}  \Z\]
  where the last map denotes multiplication by $p$. To obtain a free resolution of $\pi_*E$, one takes a direct sum of suspended copies of this resolution indexed by a basis of $\pi_*E$. Since $\pi_*F$ is an $\F_p$-module, applying the functor $- \otimes \pi_*F$ to this resolution one obtains the following.
  \[ \cdots \to 0 \to \pi_*E \otimes \pi_*F \xrightarrow{0} \pi_*E \otimes \pi_*F\]
  Therefore, the $E^2$ page of the Tor spectral sequence  calculating $\pi_*( E \whz F)$ is given by 
  \[\Lambda_{\F_p}(z) \otimes \pi_*E \otimes \pi_*F\]
  where $\text{deg}(z) = (1,0)$. There are no non-trivial differentials since the $E^2$ page is concentrated in homological degrees $0$ and $1$. This proves the first statement of the lemma.
  
  Assuming $\pi_*E$ and $\pi_*F$ are concentrated in degrees that are multiples of $d$, $\pi_*E \otimes \pi_*F$ is also concentrated in degrees that are multiples of $d\geq 2$. Therefore, for each total degree, there is at most one  bidegree of $E^2$ with a non-trivial entry. In particular, we have no additive extension problems. 
  The naturality follows by the functoriality of the Tor spectral sequence and the fact that there are no extension problems.
 \end{proof}
\begin{remark}\label{rmk homotopy and maps}
Let $K$ be an $\hz$-module and let $\ho(\hz \modu)$ denote the homotopy category of $\hz$-modules. Since  $\pis(\Sigma^i\hz)$ is free over $\pis \hz \cong \z$, we obtain the following using the Ext spectral sequence in \eqref{thm ext spectral sequence}.
\[\ho(\hz \modu)(\Sigma^i \hz, K) \cong \Hom_{\Z}(\Z, \pi_iK) \cong \pi_i K\]
In other words, for a given class $k \in \pi_iK$,  there is a unique map $\Sigma^i\hz \to K$ in $\ho(\hz \modu)$ that carries $1 \in \pi_i(\Sigma^i \hz) \cong \z$ to $k \in \pi_iK$. In this situation, we say the map $\Sigma^i \hz \to K$ represents $k \in \pi_iK$.
 One obtains a similar correspondence for $\hfp$-modules.  Given an $\hfp$-module $K$ and a class $k \in \pi_i K$, there is a unique map $\Sigma^i \hfp \to K$ in the homotopy category of $\hfp$-modules that carries $1 \in \pi_i (\Sigma^i \hfp) \cong \fp$ to $k \in \pi_i K$.

\end{remark}
\begin{lemma}\label{lemma homology}
Let $d$ be an integer with  $d>2$. Furthermore, let $X$ be an $H\Z$-algebra whose homotopy ring is an $\F_p$-algebra concentrated in degrees that are multiples of $d$.  
 There is an isomorphism of rings \[\pi_*(H\F_p \wedge_{H\Z}X) \cong \Lambda_{\F_p}(\tau_0) \otimes \pi_*X\] where $\lvert \tau_0 \rvert = 1$. 
 
 Let $E$ and $F$ be $H\Z$-algebras satisfying the hypothesis above  and let $\psi \co E \to F$ be a map of $H\Z$-algebras. The induced map  $\pi_*(H\F_p \wedge_{H\Z} \psi)$ is the map given by $id \otimes \pis \psi$ where $id$ denotes the identity map of $\lambdafp(\tau_0)$.
\end{lemma}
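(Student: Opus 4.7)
The underlying isomorphism of graded $\F_p$-modules is essentially already in hand: applying Lemma \ref{lemma module homology} with $E = \hfp$ and $F = X$ gives an isomorphism
\[\pi_*(\hfp \whz X) \cong \Lambda_{\F_p}(\tau_0) \otimes \pi_*X\]
of graded $\F_p$-modules, and the naturality statement there supplies the naturality on underlying modules. What remains is to upgrade this to an isomorphism of rings. The plan is to use the multiplicative Tor spectral sequence
\[E^2_{s,t} = \mathrm{Tor}^{\z}_{s,t}(\F_p,\pi_*X) \Longrightarrow \pi_{s+t}(\hfp \whz X)\]
of \cite[\rom{4}.4.1]{elmendorf2007rings}, together with the degree hypothesis $d>2$ to kill every extension ambiguity.

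First I would identify $E^2$ as a ring. Since $\pi_*X$ is an $\F_p$-algebra and fields have global dimension zero, the change-of-rings spectral sequence along $\z \to \F_p$ collapses to a ring isomorphism
\[\mathrm{Tor}^{\z}_*(\F_p,\pi_*X) \cong \mathrm{Tor}^{\z}_*(\F_p,\F_p) \otimes_{\F_p} \pi_*X \cong \Lambda_{\F_p}(\tau_0) \otimes \pi_*X,\]
with $\tau_0$ of bidegree $(1,0)$ coming from the standard length-two resolution of $\F_p$ over $\z$. The spectral sequence is concentrated in homological degrees $0$ and $1$, so it collapses at $E^2$ and $E^\infty$ identifies with the associated graded of $\pi_*(\hfp \whz X)$ as rings.

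Next I would rule out multiplicative extensions. The filtration on $\pi_*(\hfp \whz X)$ has only two pieces: filtration $0$ is $\pi_*X$, concentrated in degrees that are multiples of $d$; filtration $1$ is $\tau_0\cdot\pi_*X$, concentrated in degrees $\equiv 1 \pmod{d}$. The potential extension issues are the value of $\tau_0^2$ and the commutation of $\tau_0$ with $\pi_*X$. The class $\tau_0^2$ lives in total degree $2$, so any correction would have to lie in $\pi_2X \oplus \tau_0\cdot\pi_1X$, which vanishes since $d>2$. The commutation correction would have to lie in filtration $0$ in a total degree $\equiv 1 \pmod{d}$, which is again impossible since filtration $0$ is concentrated in multiples of $d$. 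Hence the ring structure on $\pi_*(\hfp \whz X)$ agrees with the tensor product ring $\Lambda_{\F_p}(\tau_0) \otimes \pi_*X$.

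The naturality statement is then immediate from the module-level naturality of Lemma \ref{lemma module homology}: the map $\pi_*(\hfp \whz \psi)$ equals $\mathrm{id} \otimes \pi_*\psi$ as a map of graded abelian groups, and any map of sets is determined by its values. The main point of caution, and the reason the hypothesis is $d>2$ rather than $d \geq 2$, is the extension controlling $\tau_0^2$; for $d=2$ the possible correction in $\pi_2X$ need not vanish and the splitting as a tensor product can fail.
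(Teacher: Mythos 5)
Your proposal takes a genuinely different route from the paper's proof. The paper avoids appealing to a multiplicative structure on the Tor spectral sequence over $H\Z$. Instead, it lets $Z$ be the $H\F_p$-algebra corresponding to the formal $\F_p$-DGA with homology $\pi_*X$, computes $\pi_*(H\F_p \wedge_{H\Z} Z) \cong \Lambda_{\F_p}(\tau_0) \otimes \pi_*Z$ easily (factoring through $(H\F_p \wedge_{H\Z} H\F_p)\wedge_{H\F_p} Z$ and using the K\"unneth isomorphism over the field $\F_p$), and then shows that $X$ and $Z$ are isomorphic \emph{as monoids} in $\mathrm{Ho}(H\Z\text{-}\mathrm{Mod})$. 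The monoid isomorphism is where $d>2$ enters for the paper: after extending scalars to $H\F_p$-modules, the set of maps $X\wedge_{H\Z}X \to Z$ in the homotopy category is a $\mathrm{Hom}$ of graded $\F_p$-modules, and the two candidate maps (the ones expressing that $\phi$ does or does not commute with the multiplications) can only disagree on $\tau_0$-containing generators, which are in degrees $\equiv 1$ or $2 \pmod d$ and therefore map to zero since $\pi_*Z$ is concentrated in multiples of $d\geq 3$. Your approach replaces this monoid-isomorphism step with the multiplicativity of the Tor spectral sequence over $H\Z$ followed by an extension argument; it is shorter and arguably more direct.

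Two points of caution about your route. First, the multiplicativity of the Tor spectral sequence for $H\F_p \wedge_{H\Z} X$ with $X$ merely an associative $H\Z$-algebra is a standard fact, but it is \emph{not} contained in the reference \cite[IV.4.1]{elmendorf2007rings}, which only constructs the spectral sequence; you would need to supply this (e.g.\ via a bar filtration compatible with the shuffle product, using commutativity of $H\F_p$) or cite a source that asserts it. This is exactly the input the paper's argument is engineered to avoid. Second, in the commutation-extension step you should say why the commutator lands in filtration zero: on $E^\infty$, $\tau_0$ commutes with $\pi_*X$ in the Koszul-signed sense because $E^2$ is the \emph{graded} tensor product $\Lambda_{\F_p}(\tau_0)\otimes\pi_*X$; this holds whether or not $\pi_*X$ is commutative, which is worth flagging since the lemma does not assume commutativity. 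With those two points spelled out, your argument is complete and correct; your remark on why $d=2$ fails (the $\tau_0^2$ extension) matches the paper's Lemma \ref{lemma homologyfor p eq 2}.
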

\begin{proof}
There is also a zig-zag of Quillen equivalences between the model categories of  $\hfp$-algebras and $\fp$-DGAs \cite{shipley2007hz}. Let $Z$ denote the $H\F_p$-algebra corresponding to the  formal $\fp$-DGA with homology $\pi_*X$; in particular, we have $\pi_* Z \cong \pi_* X$ as rings. Note that  we have the following isomorphism of rings
 \begin{equation}\label{eq homology of z is the correct one}
     \pi_*(H\F_p \wedge_{H\Z} Z) \cong \pi_*((H\F_p \wedge_{H\Z} H\F_p) \wedge_{H\F_p} Z) \cong \lambdatau \otimes \pi_*Z
 \end{equation}
 where the first isomorphism follows by the fact that $Z$ is an $H\F_p$-algebra and the second isomorphism follows by the fact that all the higher Tor terms are trivial in the relevant Tor spectral sequence as $\F_p$ is a field. We start by proving that $X$ is  isomorphic to $Z$ as a monoid in the homotopy category of $\hz$-modules  $\ho(\hz\modu)$.
 
 Since every chain complex is quasi-isomorphic to its homology,  the chain complexes corresponding to $X$ and $Z$ are quasi-isomorphic, i.e.\ $X$ and $Z$ are weakly equivalent as $H\Z$-modules. One can also see this by using the Ext spectral sequence in \eqref{thm ext spectral sequence} for maps of $\hz$-modules  from $X$ to $Z$ and the fact that $\z$ has global dimension $1$. In this spectral sequence, $E_2^{0,0} = \Hom_{\z}(\pi_{-*} X, \pi_{-*} Z)$ survives non-trivially to the $E_\infty$-page. Therefore, one can choose an equivalence $\phi \co X \to Z$ of $H\Z$-modules that induces a  ring isomorphism $\phi_* \co  \pi_* X \to \pi_* Z$. 
 
 Now we show that $\phi$ is a monoid isomorphism in $\ho(\hz \modu)$. For this, we need to show two things. We need to show that this map carries the unit to the unit and that it preserves the multiplication.

 We start by showing that $\phi$ preserves the unit. Let $u_X \co H\Z \to X$ and $u_Z \co \hz \to Z$ in $\ho(\hz \modu)$ denote the unit maps of  $X$ and $Z$ respectively.  Recall from Remark \ref{rmk homotopy and maps} that in $\ho(\hz \modu)$, a map out of $\hz$  is uniquely determined by the image of $1 \in \pi_0 \hz \cong \z$ in homotopy. Therefore, in order to show that $\phi \circ u_X = u_Z$ in $\ho(\hz \modu)$, it is sufficient to show that \[\pis(\phi \circ u_X)(1) =  \pis(u_Z)(1).\] Since $u_X$ and $u_Z$ come from maps of $\hz$-algebras, they  preserve the unit at the level of  homotopy groups. Furthermore, we chose $\phi$ in a way that $\phi_*$ is a ring homomorphism. In particular, $\phi_*$ also sends the unit to the unit. This establishes the identity above and shows that $\phi$ preserves the unit.  
 
 To show that $\phi$ preserves the multiplication, we need to to show that the following diagram commutes in $\ho(\hz \modu)$.
  \begin{equation} \label{diag map of monoids}
    \begin{tikzcd}
     X \wedge_{H\Z} X \arrow[r, "\phi \wedge_{H\Z} \phi"] \arrow[d,"m_X"] &  Z \wedge_{H\Z} Z \arrow[d,"m_Z"]
    \\
     X \ar[r,"\phi"] &  Z
    \end{tikzcd}
\end{equation} 
 All the smash products are derived as usual and $m_X$ and $m_Z$ denote the multiplication maps of $X$ and $Z$ respectively. To show that this diagram commutes, we need to identify the homotopy classes of maps from $X \wdgz X$ to $Z$ in $\hz$-modules. 
 
 There is an adjunction between the homotopy categories of $\hz$-modules and $\hfp$-modules where the left adjoint 
 \[\hfp \wdgz - \co \ho(\hz \modu) \to \ho(\hfp \modu)\]
 is the extension of scalars functor and the right adjoint is given by the forgetful functor. Note that the extension of scalars functor is strong monoidal due to the  equality 
  \begin{equation} \label{eq strongmonoidalsmash}
     H\F_p \wedge_{H\Z}( E \wedge_{H\Z} F) \cong (H\F_p \wedge_{H\Z} E) \wedge_{H\F_p} (H\F_p \wedge_{H\Z} F)
 \end{equation}
 that holds for every pair of $\hz$-modules $E$ and $F$.
 
 Since $Z$ is an $\hfp$-module, we obtain the following.  
 \begin{equation}\label{eq derived extension of scalars adjunction} 
 \begin{split}
 \text{Ho}(H\Z\modu) (X\wedge_{H\Z} X, Z) &\cong \text{Ho}(H\F_p\modu)(H\F_p \wedge_{H\Z}(X \wedge_{H\Z} X),Z)\\
 & \cong \text{Ho}(H\F_p\modu)((H\F_p \wedge_{H\Z} X) \wedge_{H\F_p} (H\F_p \wedge_{H\Z} X),Z)
 \end{split}
 \end{equation}
 Here, the first isomorphism is due to the adjunction and the second isomorphism follows by \eqref{eq strongmonoidalsmash}. 
By Lemma \ref{lemma module homology}, there is the following isomorphism of graded abelian groups. 
 \begin{equation*} \label{eq modulehomologyofX}
 \pi_*(H\F_p \wedge_{H\Z} X)\cong \lambdatau \otimes \pi_*X
 \end{equation*}

 Since $\F_p$ is a field, the Tor spectral sequence shows that smashing over $H\F_p$ results in a tensor product in homotopy. We obtain  the following isomorphism of graded abelian groups.
 \[\pi_* ((H\F_p \wedge_{H\Z} X) \wedge_{H\F_p} (H\F_p \wedge_{H\Z} X)) \cong \lambdatau \otimes \pi_* X \otimes \lambdatau \otimes \pi_*X\]
 Furthermore, the set of homotopy classes of maps between  $H\F_p$-modules are simply given by the set of maps of homotopy groups. This follows by the Ext spectral sequence calculating the homotopy classes of maps of $H\fp$-modules; this spectral sequence has trivial terms in non-zero cohomological degrees because $\F_p$ is a field.  Using this fact and the identification in \eqref{eq derived extension of scalars adjunction}, we deduce the following.
 \begin{equation*} \label{eq homsetforXtoZ}
     \text{Ho}(H\Z\modu) (X\wedge_{H\Z} X, Z) \cong \text{Hom}_{\F_p}(\lambdatau \otimes \pi_* X \otimes \lambdatau \otimes \pi_*X, \pi_*Z)
 \end{equation*}

Let $f$ and $g$ denote the maps on the right hand side corresponding to the maps $m_Z \circ (\phi \wdgz \phi)$ and $\phi \circ m_X$ of Diagram \eqref{diag map of monoids} respectively. In order to prove that Diagram \eqref{diag map of monoids} commutes, i.e.\ in order to deduce that $\phi$ is an isomorphism of monoids, it is sufficient to  show that $f = g$. 

In $\lambdatau \otimes \pi_* X \otimes \lambdatau \otimes \pi_*X$, the elements of the form $\tau_0 \otimes x_1 \otimes \tau_0 \otimes x_2$,   $\tau_0 \otimes x_1 \otimes 1 \otimes x_2$ and  $1 \otimes x_1 \otimes \tau_0 \otimes x_2$ for $x_1, x_2 \in \pi_*X$ lie in degrees, $2+dm$, $1+dm$ and $1+dm$ for some $m$ respectively. Since $\pi_*Z$ is concentrated in degrees that are multiples of $d$ and because $d \geq 3$, all the elements mentioned above are necessarily mapped to zero by both $f$ and $g$. In particular, $f$ and $g$ agree on elements with a $\tau_0$ factor. 

Therefore, it is sufficient to show that $f$ and $g$ agree  for elements of the form $1 \otimes x_1 \otimes 1 \otimes x_2$ for every $x_1,x_2 \in \pi_*X$.

The adjoint of the map $m_Z \circ (\phi \wdgz \phi)$, which induces the map $f$ in homotopy, is given by the following composite.
\begin{equation} \label{eq composinghorthenvert}
\begin{split}
    (H\F_p \wedge_{H\Z} X) \wedge_{H\F_p} (H\F_p \wedge_{H\Z} X) &\to (H\F_p \wedge_{H\Z} Z) \wedge_{H\F_p} (H\F_p \wedge_{H\Z} Z)\\
    &\to \hfp \wdgz Z \to Z
    \end{split}
\end{equation}
Here, the first map is the canonical map induced by $\phi$, the second map is the multiplication map of $H\F_p \wedge_{H\Z} Z$ and the third map is the $\hfp$-algebra  structure map of $Z$. Using the functoriality of the Tor sequence, we obtain that the induced ring map at the level of homotopy groups by the map  of $\hz$-algebras:
 \begin{equation}\label{eq inclusion of rings}
     X \cong H\Z \wedge_{H\Z} X \to H\F_p \wedge_{H\Z} X,
 \end{equation}
 is the canonical inclusion $\pi_* X \to \lambdatau \otimes \pi_*X$. One obtains a similar result by replacing $X$ with $Z$ in the map above. Because the following diagram commutes, 
  \begin{equation} \label{diag homology of phi}
    \begin{tikzcd}
     H\Z \wedge_{H\Z} X \arrow[r, "H\Z \wedge_{H\Z} \phi"] \arrow[d] & H\Z \wedge_{H\Z} Z   \arrow[d]
    \\
     H\F_p \wedge_{H\Z} X \ar[r,"H\F_p \wedge_{H\Z} \phi"] &  H\F_p \wedge_{H\Z}  Z
    \end{tikzcd}
\end{equation} 
we deduce that the map \[\pi_*(H\F_p \wedge_{H\Z} \phi) \co \lambdatau \otimes \pi_*X \to \lambdatau \otimes \pi_*Z\] carries $1 \otimes x$ to $1 \otimes \phi_*(x)$. Therefore, the first map in \eqref{eq composinghorthenvert} carries $1 \otimes x_1 \otimes 1 \otimes x_2$ to $1 \otimes \phi_*(x_1) \ot 1 \ot \phi_*(x_2)$. Due to \eqref{eq homology of z is the correct one}, the second map carries the latter element to $1 \otimes \phi_*(x_1) \phi_*(x_2) = 1\otimes \phi_*(x_1x_2)$ and the third map  carries this element to $\phi_*(x_1x_2)$. We deduce that $f(1 \otimes x_1 \otimes 1 \ot x_2) = \phi_*(x_1 x_2)$.

Therefore, we need to show that $g$ also carries $1 \otimes x_1 \otimes 1 \otimes x_2$ to $\phi_*(x_1x_2)$. The adjoint of $\phi \circ m_X$, which induces $g$ in homotopy, is given by the composite map 
\begin{equation*}
    \begin{split}
        (\hfp \wdgz X) \wdgfp (\hfp \wdgz X) \to \hfp \wdgz X \xrightarrow{\hfp \wdgz \phi} \hfp \wdgz Z \to Z
    \end{split}
\end{equation*}
where the first map is the multiplication map of $\hfp \wdgz X$ and the last map is the $\hfp$-algebra structure map of $Z$.  Using \eqref{eq inclusion of rings} as before, we deduce that the ring structure on $1 \otimes \pis X \subset \pis(\hfp \wdgz X)$ is given by the ring structure on $\pis X$. In particular, we deduce that the first map in the composite above carries $1 \otimes x_1 \otimes 1 \otimes x_2$ to $1 \otimes x_1 x_2$ at the level of homotopy groups. Due to \eqref{diag homology of phi}, this element is  carried to $1 \otimes \phi_*(x_1x_2)$ by the second map $\hfp \wdgz \phi$. Due to \eqref{eq homology of z is the correct one}, the last map in the composite carries $1 \otimes \phi_*(x_1 x_2)$ to $\phi_*(x_1x_2)$. We deduce that the induced map at the level of homotopy groups by the  composite above, and therefore $g$, carries $1 \ot x_1 \ot 1 \ot x_2$ to $\phi_*(x_1 x_2)$. This shows that $f$ and $g$ agree and that  Diagram \eqref{diag map of monoids} commutes. Therefore, $\phi$ is an isomorphism of monoids between $X$ and $Z$ in $\ho(\hz \modu)$.

 Since $\hfp \wdgz - $ is a strong monoidal functor, we deduce that $\hfp \wdgz X$ and $\hfp \wdgz Z$ are isomorphic as monoids in $\ho(\hfp \modu)$. Since the homotopy rings of $\hfp$-algebras are determined by their isomorphism classes as  monoids in $\text{Ho}(\hfp \modu)$, we deduce that $\pis(\hfp \wdgz X)$ is isomorphic to $\pis(\hfp \wdgz Z)$ as rings. Using \eqref{eq homology of z is the correct one}, we obtain the desired  ring isomorphism 
 \[\pis (\hfp \wdgz X) \cong \lambdafp(\tau_0) \ot \pis X.\]
 
The naturality of this isomorphism follows by the naturality result in Lemma \ref{lemma module homology}. 
\end{proof}
The following lemma provides a counter example to the  lemma above for $d=2$; this is obtained by using $X=Y$ in the first item  of the lemma below. 

We use the following lemma to prove the $p=2$ case of Theorem \ref{thm nontrivial}. Note that in this section, we do not assume the existence of $X$ or $Z$ satisfying the hypothesis of the items in the lemma below for $m>2$. However, this does not cause a problem since the lemma would be vacuously  true if such objects did not exist.

\begin{lemma} \label{lemma homologyfor p eq 2}
Let $Y$ denote the $H\Z$-algebra corresponding to the non-formal DGA given in Example \ref{ex duggershipley} for $p=2$. Also, let $X$ and $Z$ be $H\Z$-algebras with $2$ Postnikov section equivalent to $Y$. For   $m>1$ and $\lv x \rv = 2$, we have the following.
\begin{enumerate}
    \item If the homotopy ring of $X$ is  $\F_2[x]/(x^m)$, then there is an isomorphism of rings \[\pi_*(H\F_2 \wedge_{H\Z} X) \cong \F_2[\xi_1]/(\xi_1^{2m})\]  where $\lv \xi_1 \rv = 1$.
    \item If the homotopy ring of $X$ is   $\F_2[x]$, then there is an isomorphism of rings \[\pi_*(H\F_2 \wedge_{H\Z} X) \cong \F_2[\xi_1]\]  where $\lv \xi_1 \rv = 1$.
    \item Assume that there is an isomorphism   $\pis Z \cong \F_2[x]/(x^m) \oplus \F_2w$ of rings where $\lv w \rv = 2m$ and $\oplus$ denotes the square zero extension.  There is  a map of rings \[\F_2[\xi_1]/(\xi_1^{2m}) \otimes \F_2[w] \to \pi_*(H\F_2 \wedge_{H\Z} Z)  \]
    whose restriction to degrees less than $2m+1$ is an isomorphism, where $\lv \xi_1\rv=1$. 
    
    Furthermore, the ring map $\pi_*(H\F_2 \wedge_{H\Z} Z) \to \pi_*(H\F_2 \wedge_{H\Z}  Z[(m-1)2])$ induced by the Postnikov section map $Z \to Z[(m-1)2]$ agrees with the ring map  \[ \F_2[\xi_1]/(\xi_1^{2m}) \otimes \F_2[w] \to \F_2[\xi_1]/(\xi_1^{2m})\] that carries $\xi_1$ to $\xi_1$ and $w$ to $0$ on degrees below $2m+1$. 
    \end{enumerate}
    
\end{lemma}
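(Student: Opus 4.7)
My plan is to handle the three items uniformly by first fixing the additive structure with the defining cofiber sequence for $\hft$, and then pinning down the single multiplicative extension $\tau_0^2 = x$, which is the phenomenon specific to $|x| = d = 2$ at the prime $2$ and which causes Lemma \ref{lemma homology} to fail in this range. This extension is precisely what encodes the non-formality of $Y$.

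\textbf{Additive input.} Smash the cofiber sequence $\hz \xrightarrow{\cdot 2} \hz \to \hft$ with $X$ or $Z$ over $\hz$. Because $\pi_* X$ and $\pi_* Z$ are $\F_2$-vector spaces, multiplication by $2$ is zero on homotopy, so the long exact sequence collapses to short exact sequences
\[0 \to \pi_n X \to \pi_n(\hft \wdgz X) \to \pi_{n-1} X \to 0\]
and similarly for $Z$. Let $\tau_0 \in \pi_1(\hft \wdgz X)$ denote the connecting image of $1 \in \pi_0 X$. The resulting $\F_2$-dimensions in every degree already match those of the claimed rings.

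\textbf{The key identity $\tau_0^2 = x$.} I first prove this for $X = Y$ by a direct chain-level calculation. Model $Y$ by the explicit DGA $\Z[e_1 \mid de_1 = 2]/(e_1^4)$ of Example \ref{ex duggershipley} and model $\hft$ over $\hz$ by the cofibrant CDGA $\tilde F = \Lambda_{\Z}(\epsilon)$ with $d\epsilon = 2$. Then $\tilde F \otimes_{\Z} Y$ is a DGA model of $\hft \wdgz Y$ whose homology, with its ring structure, computes $\pi_*(\hft \wdgz Y)$. A short homology calculation shows that $\epsilon \otimes 1 - 1 \otimes e_1$ is a cycle representing $\tau_0$ and $1 \otimes e_1^2$ represents $x$. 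Applying the Koszul sign rule on the DGA tensor product gives $(1 \otimes e_1)(\epsilon \otimes 1) = -(\epsilon \otimes 1)(1 \otimes e_1)$, so the two cross terms in $(\epsilon \otimes 1 - 1 \otimes e_1)^2$ cancel; combined with $\epsilon^2 = 0$ this leaves $1 \otimes e_1^2$, proving $\tau_0^2 = x$. For a general $X$ whose $2$ Postnikov section is $Y$, the section map $X \to Y$ is a map of $\hz$-algebras, so applying $\hft \wdgz -$ yields an $\hft$-algebra map $\hft \wdgz X \to \hft \wdgz Y$; by naturality of the cofiber sequence and the five lemma this map is an isomorphism on $\pi_n$ for $n \leq 3$, sending $\tau_0 \mapsto \tau_0$ and $x \mapsto x$, so the identity $\tau_0^2 = x$ transports from $Y$ to $X$ and likewise to $Z$.

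\textbf{Assembling the rings.} Iterating $\tau_0^2 = x$ produces $\tau_0^{2k} = x^k$ and $\tau_0^{2k+1} = \tau_0 \cdot x^k$, and the additive count then forces item (1): $\pi_*(\hft \wdgz X) \cong \F_2[\xi_1]/(\xi_1^{2m})$ with $\xi_1 = \tau_0$, where the relation $\xi_1^{2m} = 0$ is automatic since $\pi_{2m}(\hft \wdgz X) = 0$. Item (2) is the same argument without truncation. For item (3), the same reasoning gives a subring $\F_2[\xi_1]/(\xi_1^{2m}) \subset \pi_*(\hft \wdgz Z)$ with $\tau_0^{2m} = x^m = 0$, and the image $\bar w$ of $w \in \pi_{2m} Z$ is a further generator. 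Since the only relation in the source $\F_2[\xi_1]/(\xi_1^{2m}) \otimes \F_2[w]$ is $\xi_1^{2m} = 0$, which holds in the target, the ring map in the statement is well defined; counting dimensions degree by degree shows it is an isomorphism below degree $2m+1$. The compatibility with the Postnikov section $Z \to Z[(m-1)\cdot 2]$ (whose target is covered by item (1)) is then immediate because that section kills $w$ and is the identity on the image of $\F_2[x]/(x^m)$.

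The main obstacle is the identity $\tau_0^2 = x$ for $Y$. It is invisible on the Tor spectral sequence computing $\pi_*(\hft \wdgz Y)$, since on the $E^\infty$ page $\tau_0$ and $x$ lie in different filtrations and $\tau_0^2$ algebraically vanishes; the relation is a hidden multiplicative extension into lower filtration, detectable only by an honest chain-level computation, and the explicit DGA model of $Y$ from Example \ref{ex duggershipley} is what makes such a computation tractable.
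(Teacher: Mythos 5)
Your chain-level computation of $\tau_0^2 = x$ in $\pi_*(\hft \wdgz Y)$ via the explicit DGA model $\Z[e_1 \mid de_1=2]/(e_1^4)$ of $Y$ and the flat resolution $\Lambda_{\Z}(\epsilon)$ of $\F_2$ is correct, and it is a genuinely more elementary route to this multiplicative extension than the paper's, which passes through the $\hz$-algebra $E=\hz\wedge\hft$, identifies $\pi_*(\hft\wdgz E)$ with the dual Steenrod algebra, and shows $E[2]\simeq Y$. Your transport of the relation $\tau_0^2=x$ from $Y$ to a general $X$ along the Postnikov section, using the five lemma in low degrees, is also sound.

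However, there is a genuine gap at the step ``the additive count then forces item (1).'' Knowing that $\dim_{\F_2}\pi_n(\hft\wdgz X)=1$ for $0\le n\le 2m-1$, that $\tau_0^2=x$, and that $x^k\ne 0$ for $k\le m-1$ does \emph{not} by itself force $\pi_*(\hft\wdgz X)\cong\F_2[\tau_0]/(\tau_0^{2m})$: you still need $\tau_0^{2m-1}=\tau_0 x^{m-1}\ne 0$, and this does not follow from the data you have assembled. Since $\tau_0^{2m}=x^m=0$ is automatic, no contradiction arises from assuming $\tau_0 x^{m-1}=0$; the graded $\F_2$-algebra $\F_2[\tau_0]/(\tau_0^{2m-1})\oplus\F_2 w$ (a square-zero extension with $|w|=2m-1$) has the same Poincar\'e series, satisfies $\tau_0^2=x$, and contains $\F_2[x]/(x^m)$ as a subring. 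Proving the nonvanishing of $\tau_0 x^{m-1}$ is precisely the content the paper extracts from Lemma~\ref{lem claim 1}, which is a substantial piece of its argument; your proposal omits it entirely, and the same omission recurs in items (2) and (3), where injectivity of the claimed ring map in the top degree is asserted but not established. (There is a quick fix compatible with your own setup: the connecting map $\delta\colon\hft\wdgz X\to\Sigma X$ in the cofiber sequence you invoke is a map of right $X$-modules with $\delta(\tau_0)=1$, so $\delta(\tau_0 x^{m-1})=\delta(\tau_0)\cdot x^{m-1}=x^{m-1}\ne 0$, giving $\tau_0 x^{m-1}\ne 0$; but some such argument must be supplied.)
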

\begin{proof}

We start by proving the first part of the lemma for $X \simeq Y$, i.e.\ we prove the case $m=2$ of the first part. 

Let $E$ be the $H\Z$-algebra $H\Z \wedge H\F_2$. This is the $H\Z$-algebra obtained from $H\F_2$ through the functor $H\Z \wedge - \co \Sp\text{-algebras} \to H\Z \text{-algebras}$. We have 
\begin{equation}\label{eq dsa for p is 2}
\pis(H\F_2 \wedge_{H\Z} E) \cong \pis(H\F_2 \wedge_{H\Z} (H\Z \wedge H\F_2)) \cong \pis(H\F_2 \wedge H\F_2) \cong \F_2[\xi_i \lvert i \geq 1]
\end{equation}
 where $\lv \xi_i \rv = 2^{i}-1$. Note that this is the dual Steenrod algebra. Furthermore, we have
\[\pi_*E = \pi_* (H\Z \wedge H\F_2) \cong \F_2[\xi_1^2] \otimes \F_2[\xi_i \lvert i \geq 2] \]
where $\lv \xi_i \rv = 2^{i}-1$ for $i \geq 2$ as before and $\lv \xi_1^2 \rv = 2$. 

Let $\psi \co E \to E[2]$ be the $2$ Postnikov section of $E$. In homotopy, this induces the map 
\[\pis(\psi) \co \F_2[\xi_1^2] \otimes \F_2[\xi_i \lvert i \geq 2] \to \lambdaft(x)\]
that satisfies $\pis(\psi)(\xi_1^2) = x$.

 By Lemma \ref{lemma module homology}, the map 
 \[\hft \wdgz \psi \co \hft \wdgz E \to \hft \wdgz E[2]\]
 induces a map of Tor spectral sequences given by the map 
\begin{equation}\label{eq map of tor ss p is 2}
     id \otimes \pis(\psi) \co  \Lambda_{\F_2}(t) \otimes \F_2[\xi_1^2] \otimes \F_2[\xi_i \lvert i \geq 2] \to \Lambda_{\F_2}(t) \otimes \Lambda_{\F_2}(x)
\end{equation}
on the second page where $id$ denotes the identity map of $\lambdaft(t)$ and $\text{deg}(t) = (1,0)$. Furthermore, both spectral sequences degenerate on the second page due to degree reasons.  The spectral sequence on the right hand side provides an isomorphism
\[\pi_*(H\F_2 \wedge_{H\Z} E[2]) \cong \Lambda_{\F_2}(z) \otimes \Lambda_{\F_2}(x) \]
of graded abelian groups where $\lv z \rv = 1$. Note that the spectral sequence on the left computes $\F_2[\xi_i \lvert i \geq 1]$, see \eqref{eq dsa for p is 2}.

The element $t$ on the left hand side of \eqref{eq map of tor ss p is 2} is the only class of total degree $1$. Therefore, it represents $\xi_1 \in \F_2[\xi_i \lvert i \geq 1]$. Similarly, $\xi_1^2$ on the left hand side represents $\xi_1^2 \in \F_2[\xi_i \lvert i \geq 1]$. Since the map in \eqref{eq map of tor ss p is 2} carries $t$ to $t$ and $\xi_1^2$ to $x$, we obtain that the ring map 
\[\pis(\hft \wdgz \psi) \co \pis(\hft \wdgz E)\cong \F_2[\xi_i \lvert i \geq 1]  \to \pis(\hft \wdgz E[2])\]
carries $\xi_1$ to $z$ and $\xi_1^2$ to $x$ as there are no extension problems. We obtain that $z^2 = x$ in $\pi_*(H\F_2 \wedge_{H\Z} E[2])$. Furthermore, by Lemma \ref{lem claim 1} below, $zx \neq 0$ in $\pis(\hft \wdgz E[2])$. Therefore, $zx = z^3$ is the non trivial element denoted by $z \otimes x$ above. This recovers the ring structure on $\pi_*(H\F_2 \wedge_{H\Z} E[2])$, we obtain an isomorphism of rings 
\begin{equation}\label{eq homology of e2 for p is 2}
    \pi_*(H\F_2 \wedge_{H\Z} E[2]) \cong \F_2[\xi_1]/(\xi_1^4).
\end{equation}

  Now, we show that $E[2]$ is weakly equivalent as an $H\Z$-algebra to $Y$. As described in Example \ref{ex duggershipley}, there are only two $H\Z$-algebras with homotopy ring $\Lambda_{\F_2}(x)$; one of these is the $H\Z$-algebra corresponding to the formal $\f_2$-DGA with  homology $\lambdaft(x)$ and the other one is $Y$. If $E[2]$ is the $H\Z$-algebra corresponding to the formal $\f_2$-DGA, then it is an $H\F_2$-algebra. In that case, we have the following isomorphisms of rings 
\begin{equation*} 
\pi_*(H\F_2 \wedge_{H\Z} E[2]) \cong \pi_*((H\F_2 \wedge_{H\Z} H\F_2) \wedge_{H\F_2} E[2]) \cong \Lambda_{\F_2}(z) \otimes \Lambda_{\F_2}(x).
\end{equation*}
However, this contradicts \eqref{eq homology of e2 for p is 2}. Therefore,  $E[2]$ is not the $\hz$-algebra corresponding to the formal $\f_2$-DGA with homology $\lambdaft(x)$; we deduce that $E[2] \simeq Y$ as $\hz$-algebras. This, together with \eqref{eq homology of e2 for p is 2}, provides the desired isomorphisms of rings
\begin{equation*}
    \pi_*(H\F_2 \wedge_{H\Z} Y) \cong \pi_*(H\F_2 \wedge_{H\Z} E[2]) \cong \F_2[\xi_1]/(\xi_1^4).
\end{equation*}
This finishes the proof of the first part of the lemma for $m=2$.

We prove the first part by doing induction over $m$ in $\pi_*X = \F_2[x]/(x^m)$. The proof of the $m=2$ case is given above. Assume that the first item of the lemma is true for some $m \geq 2$ and we are given an $H\Z$-algebra $X$ with $\pi_*X = \F_2[x]/(x^{m+1})$ and 2 Postnikov section equivalent to $Y$. Note that by Lemma \ref{lemma module homology}, we have an isomorphism of graded abelian groups
\begin{equation*}
    \pi_*(H\F_2 \wedge_{H\Z} X) \cong \Lambda_{\F_2}(z) \otimes \pi_*X
\end{equation*}
where $\lvert z \rvert = 1$. Our goal is to understand the ring structure on $\lambdaft(z) \ot \pis X$. Note that the $\hz$-algebra map 
\begin{equation}\label{eq 123456}
X \cong H\Z \wedge_{H\Z} X \to H\F_2 \wedge_{H\Z} X
\end{equation}
induces a ring map $\pi_*X \to \Lambda_{\F_2}(z) \otimes \pi_*X$. By the functoriality of the Tor spectral sequence, it is clear that this is the canonical inclusion. This shows that the ring structure on $\pi_*X \subset \Lambda_{\F_2}(z) \otimes \pi_*X$ is given by that of $\pis X$.

Let $\psi$ denote the Postnikov  section map $X \to X[(m-1)2]$. By the induction hypothesis, the statement in the lemma is true for $X[(m-1)2]$. Furthermore by Lemma \ref{lemma module homology}, the following ring map
\begin{equation} \label{eq 19876}
    \pi_*(H\F_2 \wedge_{H\Z} X) \to \pi_*(H\F_2 \wedge_{H\Z} X[(m-1)2]) \cong \F_2[\xi_1]/(\xi_1^{2m})
\end{equation}
is an isomorphism below degree $2m$. This shows that $ \pi_*(H\F_2 \wedge_{H\Z} X) $ has the desired ring structure in degrees less than $2m$. In particular, we have $z^2 =x$. The only non-trivial classes of $\pi_*(H\F_2 \wedge_{H\Z} X)$ in degrees larger than $2m-1$ are $x^m$ and $z \otimes x^m$.  Therefore, it is sufficient to show that $z^{2m} = x^m$ and $z^{2m+1} = z \otimes x^m$. Since the ring structure on $\pis X \subset \lambdaft(z) \ot \pis X$ is given by that of $\pis X$, $z^{2m} = (z^2)^m = x^m$ as desired. Now we show that $z^{2m+1} = z \otimes x^m$.  Since $z \otimes x^m$ is the only non-trivial class in its degree, and since $z^{2m+1} = z z^{2m}= zx^m$, it is sufficient to show that $z x^m$ is non-trivial. This follows by Lemma \ref{lem claim 1} below. This finishes the proof of the first part of the lemma. 

Now we prove the second part of the lemma; let  $\pis X= \ft[x]$. By Lemma \ref{lemma module homology}, the map of rings 
\[\pis(\hft \wdgz X) \to \pis(\hft \wdgz X[m2])\]
induced by the Postnikov section $X \to X[m2]$ is an isomorphism below degree $2m+2$. For every $m>1$, we already established the ring structure on the right hand side as $X[m2]$ satisfies the hypothesis of the lemma and  $\pis(X[m2]) \cong \ft[x]/x^{m+1}$. As we can choose $m$ to be arbitrarily  large, this establishes the ring structure on $\pis(\hft \wdgz X)$ as desired. This finishes the proof of the second part of the lemma.

Now we prove the third part using the first part of the lemma. By Lemma \ref{lemma module homology}, there is an isomorphism of graded abelian groups 
\[\pi_*(H\F_2 \whz Z) \cong \Lambda_{\F_2}(z) \otimes \pi_*Z\]
where $\lv z \rv = 1$ and $\pis Z = \F_2[x]/(x^m) \oplus \ft w$ with $\lv w \rv = 2m$. As in Equation \eqref{eq 123456}, the map $Z \to H\F_2 \whz Z$ shows that the ring structure on $\pi_*Z \subset \Lambda_{\F_2}(z) \otimes \pi_*Z$ is given by the ring structure on $\pi_*Z$. Furthermore,  we consider the Postnikov section map $Z \to Z[(m-1)2]$. Note that $Z[(m-1)2]$ satisfies the hypothesis of the first part of the lemma. Arguing as in \eqref{eq 19876}, we deduce that the ring structure on $\pi_*(H\F_2 \whz Z)$ below degree $2m$ is given by $\F_2[\xi_1]/(\xi_1^{2m})$ where $x \in \pis Z \subset \pis(\hft \wdgz Z)$ corresponds to  $\xi_1^2$. Since $x^m = 0$ in $\pis Z$, we deduce that $\xi_1^{2m} = 0$ in $\pis(\hft \wdgz Z)$. This shows that there is a map  \[\F_2[\xi_1]/(\xi_1^{2m}) \otimes \F_2[w] \to \pi_*(H\F_2 \whz Z) \]
of rings that carries $\xi_1$ to $z$ and $w$ to $w$. Furthermore, this map
induces an isomorphism  \[\F_2[\xi_1]/(\xi_1^{2m}) \otimes \F_2[w] \cong \pi_*(H\F_2 \whz Z) \]
of rings  after restricting to degrees less than $2m+1$. 

For the last statement of the third part of the lemma, note that $Z[(m-1)2]$ satisfies the hypothesis of the first part. It follows by Lemma \ref{lemma module homology} that, after restricting to degrees less than $2m+1$, the ring map induced by the  Postnikov section map $Z \to Z[(m-1)2]$ is given by the same restriction of the map of rings
\[\pi_*(H\F_2 \whz Z) \cong \F_2[\xi_1]/(\xi_1^{2m}) \otimes \F_2[w] \to \pi_*(H\F_2 \whz Z[(m-1)2]) \cong \F_2[\xi_1]/(\xi_1^{2m})\]
carrying $\xi_1$ to $\xi_1$ and $w$ to $0$. This finishes the proof of the third part of the lemma. 
 \end{proof}
 What is left to prove is the following lemma.
\begin{lemma}\label{lem claim 1}
 Let $X$ be an $H\Z$-algebra with homotopy ring $\F_2[x]/(x^{m+1})$ for some $m \geq 1$ where $\lv x \rv = 2$. Furthermore,  let 
\[\pi_*(H\F_2 \whz X) \cong \Lambda_{\F_2}(z) \otimes \pi_*X \]
be an identification of graded abelian groups provided by the Tor spectral sequence, see Lemma \ref{lemma module homology}. Under this identification, the ring structure on $\pis (\hft \wdgz X)$ satisfies $zx^m \neq 0$. 
\end{lemma}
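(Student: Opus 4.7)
The plan is to exploit the Bockstein long exact sequence associated to the cofiber sequence $H\Z \xrightarrow{\cdot 2} H\Z \to H\F_2 \xrightarrow{\delta} \Sigma H\Z$ of $H\Z$-modules. Smashing this sequence over $H\Z$ with $X$ yields a cofiber sequence
\[X \xrightarrow{\cdot 2} X \to H\F_2 \wedge_{H\Z} X \xrightarrow{\delta_X} \Sigma X\]
of right $X$-modules, where the right $X$-action is inherited from right multiplication on the final smash factor in each term. Since $\pi_*X = \F_2[x]/(x^{m+1})$ is concentrated in even degrees and is annihilated by $2$, the associated long exact sequence in homotopy degenerates into short exact sequences
\[0 \to \pi_nX \to \pi_n(H\F_2 \wedge_{H\Z} X) \to \pi_{n-1}X \to 0.\]
In particular $\delta_X\co \pi_1(H\F_2 \wedge_{H\Z} X) \to \pi_0X$ is an isomorphism $\F_2 \xrightarrow{\cong} \F_2$, and after rescaling $z$ if necessary we may assume $\delta_X(z) = 1$.

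Next I would observe that the product $zx^m$ computed in the ring $\pi_*(H\F_2 \wedge_{H\Z} X)$ agrees with the result of letting $x^m \in \pi_{2m}X$ act on $z$ via the right $X$-module structure on $H\F_2 \wedge_{H\Z} X$. The reason is that the ring multiplication on $H\F_2 \wedge_{H\Z} X$ factors through the unit map $\iota\co X \cong H\Z \wedge_{H\Z} X \to H\F_2 \wedge_{H\Z} X$, and $\iota$ is precisely the map that implements the right $X$-action. With this identification, the right $X$-linearity of $\delta_X$ yields
\[\delta_X(zx^m) = \delta_X(z)\cdot x^m = x^m,\]
which is nonzero in $\pi_{2m}X$. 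Hence $zx^m$ must itself be nonzero in $\pi_{2m+1}(H\F_2 \wedge_{H\Z} X)$, as required.

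The main obstacle is formal rather than computational: one has to verify carefully that $\delta_X$ really is right $X$-linear and that the ring product on $H\F_2 \wedge_{H\Z} X$ restricts to the right $X$-action via $\iota$. Both statements follow from the general principle that the functor $-\wedge_{H\Z}X$ sends cofiber sequences of $H\Z$-modules to cofiber sequences of right $X$-modules, using the action of $X$ on the right smash factor. Once that bookkeeping is set up, the proof reduces to one application of the right module identity $\delta_X(b\cdot x) = \delta_X(b)\cdot x$ together with the computation of $\delta_X(z)$ extracted from the Bockstein long exact sequence.
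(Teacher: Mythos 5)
Your proof is correct, and it takes a genuinely different and considerably more economical route than the paper's. The paper's argument runs the Tor spectral sequence for $H\F_2\wedge_{H\Z}(X\wedge_{H\Z}X)$, carefully tracks which filtration class represents the relevant smash product $z\wedge_{H\Z}x^m$, compares against a Postnikov truncation of $X$ to rule out alternatives, and then checks the image under $id\wedge m_X$ in the resulting spectral sequence map. Your Bockstein argument replaces all of that bookkeeping with a single observation: $\delta_X = \delta\wedge_{H\Z}\mathrm{id}_X$ is right $X$-linear because it acts only on the $H\F_2$ factor, and the long exact sequence for $\cdot 2$ shows $\delta_X$ is an isomorphism in degree~$1$ (using $\pi_1X=0$ and $2=0$ on $\pi_*X$). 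The only point that deserves the word of caution you give it is the identification of the ring product $z\cdot x^m$ in $\pi_*(H\F_2\wedge_{H\Z}X)$ with the right $X$-module action of $x^m$ on $z$ via $\iota\co X\to H\F_2\wedge_{H\Z}X$; this holds because $\iota$ is the unit map of the $X$-algebra structure, and the class called $x^m$ under the Tor identification is precisely $\iota_*(x^m)$ in filtration zero. Once that is in place, $\delta_X(zx^m)=\delta_X(z)\cdot x^m=x^m\neq 0$ finishes the proof. What the paper's approach buys in exchange for its length is that its machinery (the map of Tor spectral sequences induced by the multiplication and by Postnikov truncation) is reused elsewhere in Section~\ref{sec homology lemmas}; your argument is sharper for this particular statement but special to it.
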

\begin{proof}

The multiplication map of $H\F_2 \whz X$ is given by the composite 
\begin{equation*}\label{eq multiplicationforp eq 2 homology}
    (H\F_2 \whz X) \wedge_{H\F_2} (H\F_2 \whz X) \cong H\F_2 \whz (X \whz X) \to H\F_2 \whz X
\end{equation*}
where the map on the right is induced by the multiplication map 
\begin{equation*}
    m_X \co X \wdgz X \to X
\end{equation*}
of $X$.

In order to understand the product of  $z$ with $x^m$, we choose a morphism \[z \co \Sigma H\F_2 \to H\F_2 \whz X\] of $\hft$-modules representing the unique non-trivial class $z$ in $\pi_1(H\F_2 \whz X)$ and a map  
\[x^m \co \Sigma^{2m} \Z  \to X\]
of $\hz$-modules representing the unique non-trivial class $x^m$ in $\pi_{2m}X$, see Remark \ref{rmk homotopy and maps}. Using the induced map of Tor spectral sequences, one sees that 
\[id_{\hft}\wdgz x^m \co \hft \wdgz \Sigma^{2m} \hz \to \hft \wdgz X\]  represents the unique non-trivial degree $2m$ element $1 \ot x^m$ in $\pis(\hft \wdgz X)$. Note that $id_{\hft}$ denotes the identity map of $\hft$ and we have $\hft \wdgz \Sigma^{2m} \hz \cong \Sigma^{2m}\hft$.  The product \[zx^m \co \Sigma^{2m+1} H\F_2 \to H\F_2 \whz X\]
is given by composing vertically and then horizontally and then vertically twice in the following commuting diagram.  
\begin{equation} \label{eq 123}
    \begin{tikzcd}[row sep=normal, column sep = large]
    \Sigma^{2m+1} H\F_2  \arrow[d,"\cong"] & \\
      \Sigma H\F_2 \wedge_{H\F_2}(H\F_2 \whz \Sigma^{2m}H\Z) \arrow[r, "z \wedge_{H\F_2}(id_{H\F_2} \whz x^m)"] \arrow[d,"\cong"]  &(H\F_2 \whz X) \wedge_{H\F_2} (H\F_2 \whz X)   \arrow[d,"\cong"]
    \\
     \Sigma H\F_2 \wedge_{H\Z} \Sigma^{2m} H\Z  \ar[r,"z \whz x^m"] & (H\F_2 \whz X) \whz X   \cong H\F_2 \whz (X \whz X) \ar[d, "id_{H\F_2} \whz m_X"]\\
     &  H\F_2 \whz X
    \end{tikzcd}
\end{equation} 
Here, the vertical isomorphisms are given by the cancellation of $H\F_2$ by $\wedge_{H\F_2}$. When we say the class represented by $z \wdgz x^m$ in homotopy, we mean the image of $1 \in \pi_{2m+1}(\Sigma H\F_2 \whz \Sigma^{2m}H\Z)\cong \ft$ under the map $\pi_{2m+1}(z \wdgz x^m)$. This is consistent with the notation of Remark \ref{rmk homotopy and maps} since $z \wdgz x^m$ is a map of $\hft$-modules . Note that $z \wdgz x^m$ is a map of $\hft$-modules because $z$ is a map of $\hft$-modules. Similarly, when we say the class represented by $ (id_{\hft} \wdgz m_X) \circ (z \wdgz x^m)$ in homotopy, we mean the image of $1 \in \pi_{2m+1}(\Sigma H\F_2 \whz \Sigma^{2m}H\Z)$ under the map $\pi_{2m+1}((id_{\hft} \wdgz m_X) \circ (z \wdgz x^m))$. 

Using the Tor spectral sequence for $\hft$-modules, one observes that the smash product of $\hft$-modules denoted by $\wdgft$ results in a tensor product at the level of homotopy groups. Using this, we deduce that  $z \wedge_{H\F_2}(id_{H\F_2} \whz x^m)$ hits a non-trivial class in homotopy. Due to the commuting square above, we deduce that  $z \wdgz x^m$ represents a non-trivial class in homotopy.

In order to prove the lemma, i.e.\ in order to show that $zx^m$ represents a non-trivial class in homotopy groups, it is sufficient to show that the map 
\begin{equation*}\label{eq claim map to show nontrivial}
    (id_{\hft} \wdgz m_X) \circ (z \wdgz x^m) \co \Sigma H\F_2 \whz \Sigma^{2m}H\Z \to H\F_2 \whz X
\end{equation*} 
represents a non-trivial class in $\pi_{2m+1}(\hft \wdgz X)$.

Again due to Lemma \ref{lemma module homology}, we have 
\begin{equation}\label{eq homotopy of x wdgz x}
\pis(X\wdgz X) \cong \lambdaft(v) \ot \pis X \ot \pis X
\end{equation}
where $\lv v \rv = 1$. Let 
\[\alpha \co \Sigma^{2m+1}\hz \to X \wdgz X\]
denote a map of $\hz$-modules representing the class $v \ot 1 \ot x^{m}$. We consider the following diagram 

\begin{equation}\label{diag last diagram}
    \begin{tikzcd}
    \Sigma^{2m+1}\hz \ar[d,"\alpha"]  & &\\
      \hz \wdgz X \wdgz X \ar[d,"f"]  & &\\
    \hft \wdgz X \wdgz X \ar[r,"p"] & \hft \wdgz \hft \wdgz X \ar[r,"\ell"]& \hft \wdgft \hft \wdgz X\\
    \Sigma \hft \wdgz \Sigma^{2m}\hz \ar[u,"z \wdgz x^m"] & & &\\
    \end{tikzcd}
\end{equation}
where $f$ is induced by the map $\hz \to \hft$, $p$ is induced by the Postnikov section map $X \to \hft$ and $\ell$ is the canonical map. 

Note that the composite map $\ell\circ p\circ f $ is isomorphic to the canonical map 
\[X \wdgz X \to \hft \wdgz X\]
induced by the Postnikov section map $X \to \hft$.
Due to Lemma \ref{lemma module homology}, this map carries the class $v \otimes 1 \otimes x^m$ to the non-trivial class $z \otimes x^m$. In particular, we deduce that $\ell \circ p$ carries the homotopy class corresponding to $f \circ \alpha$ to a non-trivial element. 

On the other hand, the homotopy class corresponding to $z \wdgz x^m$ gets carried to a trivial class by $\ell \circ p$ as the following composite map of $\hft$-modules is null-homotopic, see Remark \ref{rmk homotopy and maps}.
\[\Sigma \hft \xrightarrow{z} \hft \wdgz X \to \hft \wdgz \hft \to \hft \wdgft \hft\cong \hft \]
We deduce that $f \circ \alpha$ and $z \wdgz x^m$ represent distinct elements in $\pis(\hft \wdgz X \wdgz X)$.

Due to \eqref{eq homotopy of x wdgz x}, the Tor spectral sequence computing $\hft \wdgz (X \wdgz X)$ is given by the following: 
\begin{equation*}
    F^2 \cong \lambdaft(t) \ot \lambdaft(v) \ot \pis X \ot \pis X \Longrightarrow \pis(\hft \wdgz (X \wdgz X))
\end{equation*}
where $\deg(t)= (1,0)$ and $\deg(v) = (0,1)$. Note that all the differentials of our Tor spectral sequences are trivial on the second page and after as the global dimension of $\z$ is $1$. We need to understand the class in $F^2$ that represents the homotopy class $z \wdgz x^m$.

Considering the map of Tor spectral sequences induced by $f$, one observes that the homotopy class corresponding to $f \circ \alpha$ is represented by $1 \ot v \ot 1 \ot x^m$ on $F^2$. Since it represents an element distinct from $f \circ \alpha$, and since there are no extension problems, we deduce that $z \wdgz x^m$ is represented by a class  that is different than $1 \ot v \ot 1 \ot x^m$. 


The map 
\[\zeta \co H\F_2 \whz( X \whz X) \to H\F_2 \whz( X \whz X[2m-2])\]
induced by the  Postnikov section map $X\to X[2m-2]$ induces the canonical map \[\zeta_2 \co F^2 \cong \Lambda_{\F_2}(t) \otimes \Lambda_{\F_2}(v) \otimes \pi_* X \otimes \pi_*X \to \Lambda_{\F_2}(t) \otimes \Lambda_{\F_2}(v) \otimes \pi_* X \otimes \pi_*X[md-2]\]
at the level of Tor spectral sequences. 

The composite $\zeta \circ (z \wdgz x^m)$ is null-homotopic because the composite map  
\[\Sigma^{2m} H\Z \xrightarrow{x^m} X \to X[2m-2]\] is null-homotopic as $\pi_{2m}(X[2m-2]) = 0$, see Remark \ref{rmk homotopy and maps}. Therefore, the map of spectral sequences $\zeta_2$ should carry a class representing $z \wdgz x^m$ to the trivial element. 

 The only non-trivial elements of $F^2$ that have total degree $\lv z \wdgz x^m \rv = 2m+1$ and gets carried to $0$ are $1 \otimes v \otimes 1 \ot x^m$ and $t \otimes 1 \ot 1 \ot x^m$. Since we already showed that $1 \ot v \ot 1 \ot x^m$ do not represent $z \wdgz x^m$, and since we know that $z \wdg_{\hz}x^m$ corresponds to a non-trivial class in homotopy, we deduce that the homotopy class corresponding to $z \wdgz x^m$ is represented by 
 \begin{equation}\label{eq claim representing class}
     t \otimes 1 \otimes 1 \ot x^m + c_1(1 \otimes v \ot 1 \ot x^m)
 \end{equation}
in $F^2$ for some $c_1 \in \ft$. Note that the $c_1 \neq 0$ case is actually redundant as the first summand has a higher homological degree than the second. 

Therefore, to finish the proof of the lemma, it is sufficient to show that the class represented by the element above is carried to a non-trivial class in $H\F_2 \whz X$ by the map $id_{\hft} \wdgz m_X$. 

Note that for every $x_1,x_2 \in \pis X$, the map
\[\pi_*m_X \co \pi_*(X \whz X) \cong \Lambda_{\F_2}(v) \otimes \pi_* X \otimes \pi_*X \to  \pi_* X\]
carries $v \ot x_1 \ot x_2$ to $0$ because $\pis X$ is concentrated in even degrees. For a general $\hz$-algebra $X$, the product of classes $x_1,x_2 \in \pis(X)$ is defined to be the image of the class $1 \otimes x_1 \otimes x_2$ through the map above. Therefore, the map above should carry $1 \ot x_1 \ot x_2$ to $x_1x_2$.

We use Lemma \ref{lemma module homology} to calculate the map of spectral sequences induced by the map \[id_{H\F_2} \whz m_X \co H\F_2 \whz(X \whz X) \to H\F_2 \whz X.\] On the second page, we have 
\[id \ot \pis m_X \co F^2 \cong \Lambda_{\F_2}(t) \otimes \Lambda_{\F_2}(v) \otimes \pi_* X \otimes \pi_*X \to \Lambda_{\F_2}(t)  \otimes \pi_* X\]
where $id$ denotes the identity map of $\lambdaft(t)$ and  $\text{deg}(t) = (1,0)$  on both sides.  The image of the element in $F^2$ representing $z \wdgz x^m$ is given by 
\begin{equation}\label{eq claim class goes to nontrivial element}
    id \ot \pis m_X (t \otimes 1 \otimes 1 \ot x^m + c_1(1 \otimes v \ot 1 \ot x^m)) = t \ot x^m.
\end{equation}
This represents a non-trivial class in $\pis(\hft \wdgz X)$. Indeed, this finishes the proof of the lemma. Wrapping up, $z \wdgz x^m$ represents the class $z \ot 1 \ot 1 \ot x^m$ in 
\[\pis ((\hft \wdgz X)\wdgft(\hft \wdgz X)) \cong \lambdaft(z) \ot \pis X \ot \lambdaft(z) \ot \pis X  \]
 due to Diagram \eqref{eq 123}. To prove the lemma, we show that this class maps to a non-trivial element by the multiplication map of $\hft \wdgz X$. Equation \eqref{eq claim representing class} provides the class in $F^2$ representing $z \wdgz x^m$ and Equation \eqref{eq claim class goes to nontrivial element} states that this class goes to a non-trivial element by the multiplication map of $\hft \wdgz X$ as desired.
 \end{proof}

\end{document}